\newtheorem{thm}{Theorem}[section]
\newtheorem{cor}[thm]{Corollary}
\newtheorem{lem}[thm]{Lemma}
\newtheorem{prop}[thm]{Proposition}
\newtheorem{conj}[thm]{Conjecture}
\newtheorem{quest}[thm]{Question}
\theoremstyle{definition}
\newtheorem{Def}[thm]{Definition}
\newtheorem{rem}[thm]{Remark}
\newtheorem*{ack}{Acknowledgement}
\newtheorem{ex}[thm]{Example}
\numberwithin{equation}{subsection}
\numberwithin{figure}{section}
\def\Hom{{\text{\rm{Hom}}}}
\def\End{{\text{\rm{End}}}}
\def\trace{{\text{\rm{trace}}}}
\def\tr{{\text{\rm{tr}}}}
\def\rchi{{\hbox{\raise1.5pt\hbox{$\chi$}}}}
\def\Aut{{\text{\rm{Aut}}}}
\def\isom{\cong}
\def\tensor{\otimes}
\def\dsum{\oplus}
\def\deg{{\text{\rm{deg}}}}
\def\a{\alpha}
\def\b{\beta}
\def\o{\omega}
\def\O{\Omega}
\def\S{\Sigma}
\def\sig{\sigma}
\def\lam{\lambda}
\def\gam{\gamma}
\def\Gam{\Gamma}
\def\eps{\epsilon}
\def\nab{\nabla}
\def\Ext{{\text{\rm{Ext}}}}
\def\Dol{{\text{\rm{Dol}}}}
\def\deR{{\text{\rm{deR}}}}
\def\Mat{{\text{\rm{Mat}}}}
\def\Vect{{\rm{\bf{Vect}}}}
\def\two{{\bullet \!\!\frac{\hskip0.2in}{}\!\!\bullet}}
\def\three{{\bullet \!\!\frac{\hskip0.2in}{}\!\!\!\bullet
\!\!\!\frac{\hskip0.2in}{}\!\!\bullet}}
\newcommand{\bea}{\begin{eqnarray}}
\newcommand{\eea}{\end{eqnarray}}
\newcommand{\be}{\begin{equation}}
\newcommand{\ee}{\end{equation}}
\newcommand{\Mbar}{{\overline{\mathcal{M}}}}
\newcommand{\bP}{{\mathbb{P}}}
\newcommand{\bC}{{\mathbb{C}}}
\newcommand{\bH}{{\mathbb{H}}}
\newcommand{\bQ}{{\mathbb{Q}}}
\newcommand{\bR}{{\mathbb{R}}}
\newcommand{\bZ}{{\mathbb{Z}}}
\newcommand{\cA}{{\mathcal{A}}}
\newcommand{\cM}{{\mathcal{M}}}
\newcommand{\cC}{{\mathcal{C}}}
\newcommand{\CG}{{\mathcal{C}\mathcal{G}}}
\newcommand{\cD}{{\mathcal{D}}}
\newcommand{\cE}{{\mathcal{E}}}
\newcommand{\cF}{{\mathcal{F}}}
\newcommand{\cK}{{\mathcal{K}}}
\newcommand{\cL}{{\mathcal{L}}}
\newcommand{\cO}{{\mathcal{O}}}
\newcommand{\cS}{{\mathcal{S}}}
\newcommand{\cU}{{\mathcal{U}}}
\newcommand{\cW}{{\mathcal{W}}}
\newcommand{\la}{{\langle}}
\newcommand{\ra}{{\rangle}}
\newcommand{\half}{{\frac{1}{2}}}
\newcommand{\rar}{\rightarrow}
\newcommand{\lrar}{\longrightarrow}
\begin{document}
\large

\allowdisplaybreaks

\title[Invitation to 
2D TQFT and quantization]
{An invitation to 2D TQFT and
quantization of Hitchin spectral curves}

\author[O.\ Dumitrescu]{Olivia  Dumitrescu}
\address{
Olivia Dumitrescu:
Department of Mathematics\\
Central Michigan University\\
Mount Pleasant, MI 48859, U.S.A.\\
and Simion Stoilow Institute of Mathematics\\
Romanian Academy\\
21 Calea Grivitei Street\\
010702 Bucharest, Romania}
\email{dumit1om@cmich.edu}

\author[M.\ Mulase]{Motohico Mulase}
\address{Motohico Mulase:
Department of Mathematics\\
University of California\\
Davis, CA 95616--8633, U.S.A.\\
and
Kavli Institute for Physics and Mathematics of the 
Universe\\
The University of Tokyo\\
Kashiwa, Japan}
\email{mulase@math.ucdavis.edu}

\begin{abstract}
This article consists of two parts. In Part 1,
we present a  formulation of two-dimensional
topological quantum field theories in terms  of
a functor from 
a category of
Ribbon graphs to the endofuntor category of 
a monoidal category. The key point is  that the category of
ribbon graphs produces all  
Frobenius objects. 
Necessary backgrounds from Frobenius algebras, topological
quantum field theories, and cohomological field
theories are  reviewed. A  result on
Frobenius algebra
 twisted topological recursion is included at 
the end of Part 1.

In Part 2, we explain a 
geometric theory of
 quantum curves. The focus 
is placed on  the process of quantization as
a passage from families of 
Hitchin spectral curves to families of opers. 
To make the presentation simpler, we unfold the
story using $SL_2(\bC)$-opers and 
rank $2$ Higgs bundles
defined on 
a compact Riemann surface $C$ of genus greater than $1$. 
In this case, 
quantum curves,
opers, and  projective structures  
in $C$ all become  the same notion. 
Background materials on projective coordinate systems, 
Higgs bundles, opers, and non-Abelian Hodge
correspondence are explained. 
\end{abstract}

\subjclass[2010]{Primary: 14H15, 14N35, 81T45;
Secondary: 14F10, 14J26, 33C05, 33C10, 
33C15, 34M60, 53D37}

\keywords{Topological quantum field 
theory; quantum curves; opers;
Hitchin moduli spaces; Higgs bundles; 
Hitchin section;
quantization; topological recursion.}

\maketitle
\tableofcontents

\setcounter{section}{-1}

\section{Preface: An inspiration from the past}
A recent discovery of a cuneiform tablet
dated  
around 350 to 50 B.C.E.\ 
suggests that ancient Babylonians  must have used
geometry of
\emph{time-momentum space} 
to establish accurate calculations of Jupiter's orbit
\cite{O}. This impressive paper also contains
the picture of the tablet which describes the
Babylonian's method of \emph{integration}.

\begin{figure}[hbt]
\includegraphics[height=2in]{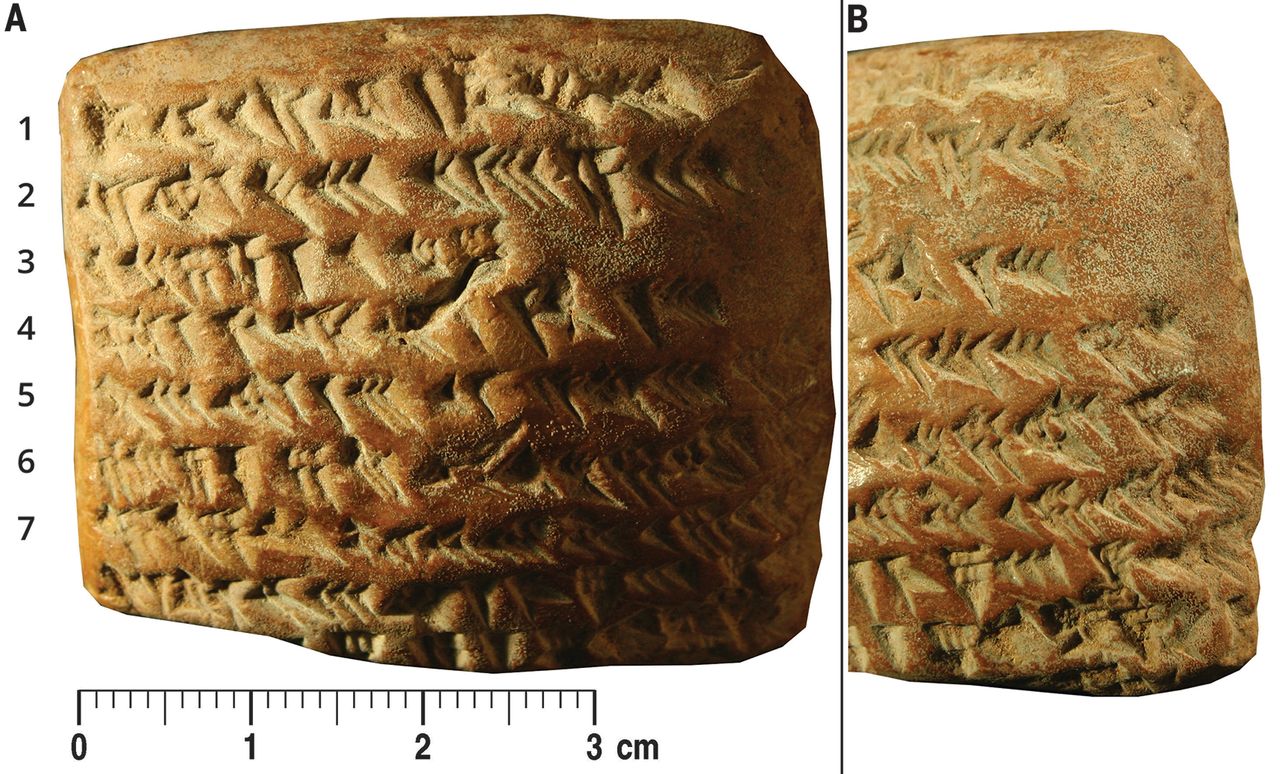}
\caption{The cuneiform tablet used in the analysis
of \cite{O}.}
\label{fig:cuneiform}
\end{figure}

The orbit of the Jupiter is a graph in  
coordinate space-time. 
By considering the graph of  momentum
of the Jupiter
in  time-momentum space, Babylonians 
\emph{visualized}  integral of the momentum by the 
area underneath the curve, 
and using a trapezoidal approximation, they
actually obtained an estimated  value of the integral.
This gives a prototype of Newton's Fundamental 
Theorem of Calculus. 
This idea of Babylonians relating  
geometry of  time-momentum space
with the analysis of actual orbit of the Jupiter
is striking, 
because it suggests their equal 
treatment of  coordinate space  the momentum 
space.  Although it is a stretch, 
we could imagine the very foundation of 
\emph{symplectic geometry} here.

Many mathematical cuneiform 
tablets recording numbers and algebraic calculations
have been our source of imagination.
The most famous is  \textbf{Plimpton 322}  
of around 1,800 B.C.E.\ (see Figure~\ref{fig:Plimton322}).
It lists 15 Pythagorean numbers in the increasing
order of hypotenuse angles from about 
45 degrees to 60 degrees \cite{Robson}.

\begin{figure}[hbt]
\includegraphics[height=2in]{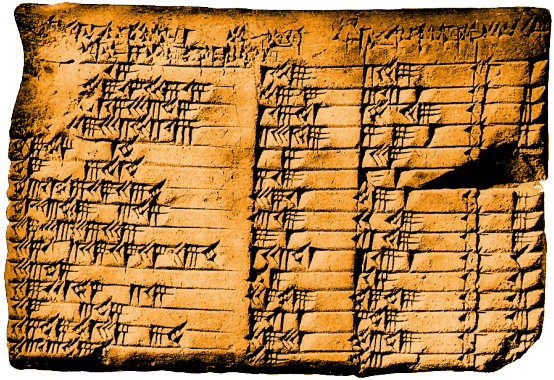}
\caption{Plimton 322}
\label{fig:Plimton322}
\end{figure}

Babylonians seem to have known
an algorithm  to calculate an approximate value 
of the
square root of any  number. 
For example, there is a cuneiform 
tablet that shows the sexagesimal expansion 
of $\sqrt{2}$. 
Although there have been
many speculations for  practical purposes of
Plimton 322 and  mechanisms to come up with
the listed numbers, our imagination
goes to the surprise of
the creator of the tablet. For the pair of numbers
$(y,z)$
 listed
in the second and the third columns, 
$z^2-y^2$ is always a perfect square. Therefore,  the 
square root algorithm terminates in a finite number of
steps
for these values, and gives an exact 
answer. They must have found \emph{a finiteness in the
forest of infinity}. This is a strong sentiment 
that resonates with our 
mind today.

The tablet of Figure~\ref{fig:cuneiform}
does not show any geometry. 
The author of \cite{O} convincingly 
argues that behind the list of 
these seemingly meaningless numbers on the
tablets, there is
a profound geometric investigation  of the
planetary motion. Our imagination is piqued by 
the discovery: the interplay between algebra,
geometry, and astronomy; and 
the equal treatment of  momentum space
and  coordinate space. 
What the feeling of the authors
 of the tablet  would have 
 been, when they were creating it? 
 It must have been  a sharp happiness of  
 discovery that  mathematics 
 correctly  predicts the very nature surrounding us,
 such as planetary motion.
 
 The accuracy of  calculations of Babylonians is
 another astonishment, in addition to 
 the lack of pictures. It is in sharp contrast to Euclid's \textbf{Elements}, which 
 was written around the same time in Greece. In \emph{Elements},
 we see 
many beautiful geometric
pictures, and  \emph{proofs}. The discovery of \emph{axiomatic method}
 culminated in it. This dichotomy,
  being able to \textbf{calculate} a quantity to a high
 precision, vs. 
  having a \textbf{proof} of the formula
 based on a \textbf{finiteness property}, is 
 our very motivation of writing these notes. 
 There are a lot of amazing formulas around
 us in the direction that we describe.
 At this moment, we do not have the final understanding
 of them, yet.

In the first part of these lectures, we explore 
two-dimensional topological quantum field theories
formulated  in terms of
\emph{cell graphs}. A cell graph of type $(g,n)$
is the $1$-skeleton 
of a cell-decomposition of a compact oriented 
topological surface of genus $g\ge 0$ with
$n$ labeled $0$-cells, or vertices.
Such graphs are called in many different names:
ribbon graphs,
dessins d'enfants, maps, embedded graphs, etc. 
We use the terminology ``cell graph'' indicating the different
nature of the  graphs on surfaces, 
which record degeneration 
of surfaces.
Although a cell graph is a $1$-dimensional object,
the notion of a \emph{face} is well defined as a $2$-cell
of the cell decomposition that the cell 
graph defines on a given surface.

The degree of a vertex of a cell graph is 
the number of incident half-edges.  
We call a $1$-cell an edge of the graph. 
As explained in 
our previous lecture notes \cite{OM4}, 
the number of cell graphs of type $(0,1)$  
with the unique vertex of degree $2m$ is
$\frac{1}{2m}C_m$, where 
$$C_m= \frac{1}{m+1}\binom{2m}{m}
$$
is the $m$-th \emph{Catalan number}. 
A generating function 
$$
z = z(x) = \sum_{m=0} ^\infty C_m\frac{1}{x^{2m+1}}
$$
of Catalan numbers satisfies an algebraic equation
\be\label{z}
x = z + \frac{1}{z}.
\ee
The story of \cite[Section 2]{OM4} tells us that 
enumeration 
problem of cell graphs of arbitrary type $(g,n)$
is solved by the \textbf{quantization}
of the algebraic curve \eqref{z}. 
The key formula for the quantization comes from
the Laplace transform of a combinatorial equation
 \cite[Catalan Recursion, Section 2.1]{OM4}.
There, the combinatorial formula is derived
by analyzing the effect of \textbf{edge-contraction 
operations} on cell graphs. 

The new story we wish to tell in these lectures
is that the \emph{category} of 
cell graphs carries the information of
\emph{all} two-dimensional  topological
quantum field theories. In  
Part 1 of this article, we will present an axiomatic setup 
of  2D TQFT.
The key idea is simple: exact same edge-contraction
operations characterize Frobenius algebras. 
When we contract an 
edge of a cell graph, two vertices collide. It 
represents \emph{multiplication}. 
When we shrink a loop, since it
is a  cycle on the topological surface,
the process breaks a vertex into
two vertices. This is the operation of
\emph{comultiplication}. The topological structure of 
a cell graph makes these operations  dual
to one another in the context of Frobenius algebras.

We start with defining Frobenius 
algebras. Topological quantum field theories
(TQFT) and cohomological field theories (CohFT)
are introduced in the following sections. 
A category of cell graphs is  defined. 
We explain 
that this category generates all Frobenius algebras.
We then make a connection between cell graphs
and  2D TQFT. The theory of \emph{topological recursion}
is a  leitmotiv
 in this article. It is another manifestation
 of a quantization procedure. 
Since there are many review articles  on
topological recursion, we touch the subject only
tangentially in this article. Our new result 
related to this topic is the
2D TQFT-twisted topological recursion, which is
 explained in the end of Part 1.

The cohomology ring of a closed oriented manifold
$X$
is a Frobenius algebra. When we consider an 
even dimensional manifold and only even degree 
cohomologies
\be\label{A}
A = H^{even}(X,\bQ),
\ee
then we have a commutative Frobenius 
algebra, which is equivalent to a 2D TQFT. 
The  \emph{Gromov-Witten
invariants} of $X$
of genus $0$
determine a quantum deformation of the ring $A$,
known as the \emph{big quantum cohomology}. 
This quantum structure then defines a 
holomorphic object $Y$, the \emph{mirror} of $X$.
Thus the holomorphic geometry of $Y$ captures 
quantum cohomology of $X$. 
Gromov-Witten invariants are generalized to 
arbitrary genera. Here arises a question:

\begin{quest}
What should be the holomorphic geometry on $Y$
that corresponds to higher-genus Gromov-Witten 
theory of $X$?
\end{quest}

If we consider the passage from genus $0$ to higher
genera Gromov-Witten theory
as \emph{quantization}, then 
on the side on $Y$, we need
a quantized holomorphic geometry. A good candidate
is the $\cD$-module theory. The simplest such theory
fitting to this context is the notion of \textbf{quantum curves}. 
The relation between quantum curves and B-model 
geometry is considered  in \cite{ADKMV, DHSV,DFM,
OM1,OM2,OM4,GHM,GS} and others. 
The key point for this idea to work is when the 
holomorphic geometry $Y$ corresponding to the
Gromov-Witten theory of $X$ is captured by 
an algebraic or analytic \emph{curve}. These curves often 
appear in other areas of mathematics as
\emph{spectral curves}, such as integrable systems,
random matrix theory, and Hitchin's theory of Higgs
bundles. A spectral curve naturally comes with  
a projection to a \emph{base curve}, making it a
covering of the base curve. 
 A quantum curve is a $\cD$-module
 on this base curve, quantizing the spectral curve. 

Since a CohFT based on the Frobenius algebra 
$H^{even}(X,\bQ)$
plays a role similar to Gromov-Witten
theory of $X$, CohFT is a quantization of 2D TQFT.
This process consists of two steps 
of quantizations. The first one is from the classical 
cohomology ring $A$ to its
quantum deformation, which is the mirror 
geometry. If the mirror is a curve, then 
the second step should  be parallel to 
the quantization of a spectral curve to a quantum curve.
We may be able to 
 understand the reconstruction \cite{DOSS} of
CohFT based on a semi-simple Frobenius algebra from the 
2D TQFT, that is obtained as the degree $0$ part of the
cohomology $H^0(\Mbar_{g,n},\bQ)$ of the 
moduli space of stable curves, analogous to 
the construction of quantum curves.

Part 2 deals with quantum curves. Since 
the authors have produced a long article \cite{OM4}
explaining the relation between quantum 
curves and topological recursion, 
the present article is focused on 
geometry of the process of
quantization of Hitchin spectral curves
 from a  perspective of \textbf{opers}. Instead of
 providing a  
 general theory of
 \cite{O-Paris, DFKMMN, OM5},
 we use $SL_2(\bC)$ to explain our ideas here. 
 We will show that the classical notion of 
 projective structures on a compact Riemann
 surface $C$ of genus $g\ge 2$ studied by 
 Gunning \cite{Gun}, $SL_2(\bC)$-opers, and
 quantum curves of Hitchin spectral curves, are indeed
 the exact same notion. 
  A conjecture
 of Gaiotto \cite{Gai} relating non-Abelian 
 Hodge correspondence and opers, together with its
 solution by \cite{DFKMMN},  will   be briefly 
  explained in the final section.

\begin{ack}
The authors are grateful to the organizers of the
\textit{Warsaw Advanced School on Topological Quantum Field Theory} for providing the opportunity to produce 
this article. They are in particular indebted to 
Piotr Su\l kowski for constant encouragement
and patience. 

The joint research of authors presented in 
this article is 
  carried out  while they  
  have been staying in the following institutions
  in the last four years:
the American Institute of Mathematics in 
California, 
the Banff International Research Station,
Institutul de
 Matematic\u{a} ``Simion Stoilow'' 
 al Academiei Rom\newtie{a}ne,
 Institut Henri Poincar\'e,
  Institute for Mathematical
Sciences at the National University of Singapore,
 Kobe University, 
 Leibniz Universit\"at Hannover, 
 Lorentz Center Leiden,
 Mathematisches Forschungsinstitut Oberwolfach,
 Max-Planck-Institut f\"ur Mathematik-Bonn,
 and
  Osaka City University Advanced
 Mathematical Institute.
 Their generous financial 
 support, hospitality, and stimulating research
 environments are greatly appreciated. 

The research of
 O.D.\ has been  supported by
 GRK 1463 \emph{Analysis,
Geometry, and String Theory} at the 
Leibniz Universit\"at 
 Hannover, Perimeter Institute for Theoretical
 Physics in  Waterloo,
 and  a grant from 
 MPIM-Bonn.
The research of M.M.\ has been  supported 
by IH\'ES, MPIM-Bonn, Simons Foundation, 
Hong Kong University of Science and Technology,
Universit\'e Pierre et Marie Curie, and
NSF grants DMS-1104734, DMS-1309298, 
DMS-1619760, DMS-1642515,
and NSF-RNMS: Geometric Structures And 
Representation Varieties (GEAR Network, 
DMS-1107452, 1107263, 1107367).
\end{ack}


\part{Topological Quantum Field Theory}

\section{Frobenius algebras}
\label{sect:Frobenius}

Throughout this article, we denote by $K$ a 
field of characteristic $0$. Most of the cases we
consider $K=\bQ$ or $K=\bC$. 
Let $A$ be a finite-dimensional, 
unital, and associative algebra 
defined over a field $K$. A \emph{bialgebra}
comes with an extra set of structures, including a
comultiplication. Examples 
of  bialgebras include
the group algebra $A=K[G]$ of a finite 
group $G$. 
Its algebra structure is canonically determined by
the multiplication rule of $G$. However, 
the group algebra has \emph{two distinct} co-algebra
structures. One of which makes $K[G]$ a Frobenius
algebra, and the other makes it a Hopf algebra. 
This difference is reflected in which topological invariants
we are dealing with. The Frobenius structure is 
useful for two-dimensional topology, and the Hopf
structure is natural for three-dimensional topological 
invariants. 

Let us start with defining Frobenius algebras. 

\begin{Def}[Frobenius algebras] 
\label{def:Frobenius}
A finite-dimensional, unital, and associative
algebra $A$ over a field $K$ is a \textbf{Frobenius
algebra} if there exists a linear map 
$\eps:A\lrar K$ called
\textbf{counit} such that for every
$u\in A$, $\eps(ux) = 0$, or
$\eps(xu) = 0$, for all $x\in A$ implies that 
$u=0$.
\end{Def}

\begin{prop}
If $A_1$ and $A_2$ are Frobenius algebras, then
$A_1\dsum A_2$ and $A_1\tensor A_2$ are 
also Frobenius algebras. 
Let us denote by $\cA$ the category of 
Frobenius algebras defined over $K$. 
Then $\cC = (\cA,\tensor,K)$ is a monoidal 
category. 
\end{prop}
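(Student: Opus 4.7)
The plan is to produce natural counits on $A_1\dsum A_2$ and $A_1\tensor A_2$, verify the non-degeneracy condition in each case, and then obtain the monoidal structure on $\cA$ by restricting the monoidal structure of $(\Vect_K,\tensor,K)$. I would use the well-known equivalent reformulation of Definition~\ref{def:Frobenius}: the counit $\eps$ satisfies the non-degeneracy condition if and only if the associated bilinear form $\b_\eps(a,b):=\eps(ab)$ is a non-degenerate symmetric bilinear form on $A$. The proof splits into three self-contained steps.

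For the direct sum I take $\eps:=\eps_1+\eps_2$, that is $(a_1,a_2)\mapsto \eps_1(a_1)+\eps_2(a_2)$. Because multiplication in $A_1\dsum A_2$ is componentwise, the test $\eps(u\cdot x)=0$ for all $x=(x_1,x_2)$ can be specialized to $x=(x_1,0)$ and $x=(0,x_2)$ separately, reducing immediately to non-degeneracy of $\eps_1$ and $\eps_2$, which is hypothesized. For the tensor product I take $\eps:=\eps_1\tensor \eps_2$. A direct computation on elementary tensors shows that the bilinear form it induces on the algebra $A_1\tensor A_2$ is precisely $\b_{\eps_1}\tensor \b_{\eps_2}$, and a tensor product of non-degenerate bilinear forms over $K$ is again non-degenerate. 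Hence both $A_1\dsum A_2$ and $A_1\tensor A_2$ are Frobenius.

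For the monoidal category statement I take morphisms in $\cA$ to be unital $K$-algebra homomorphisms that intertwine the counits; then the tensor product of two such morphisms is again a morphism in $\cA$, so $\tensor$ is a bifunctor $\cA\times\cA\to\cA$. The field $K$, with counit equal to $\mathrm{id}_K$, is a Frobenius algebra and plays the role of the unit object. The associator $(A_1\tensor A_2)\tensor A_3\isom A_1\tensor (A_2\tensor A_3)$ and the left/right unitors $K\tensor A\isom A\isom A\tensor K$ coming from $(\Vect_K,\tensor,K)$ are already $K$-algebra isomorphisms, and a direct check shows that they intertwine the product counits. The pentagon and triangle coherence axioms are then inherited verbatim from $\Vect_K$.

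The main obstacle, insofar as there is one, is bookkeeping rather than mathematical content: one must fix the notion of morphism in $\cA$ so that $\tensor$ genuinely lands in $\cA$ and so that the coherence isomorphisms remain morphisms of Frobenius algebras. Once morphisms are chosen as above, every remaining verification reduces either to standard facts about $\Vect_K$ or to the two elementary non-degeneracy checks described in the second paragraph.
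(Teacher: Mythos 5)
Your proof is correct and follows the same route as the paper, whose own proof simply declares that $\eps_1+\eps_2$ and $\eps_1\cdot\eps_2$ (the very counits you chose) do the job; you merely fill in the non-degeneracy verifications for $A_1\dsum A_2$ and $A_1\tensor A_2$ and the monoidal bookkeeping that the paper leaves implicit. The only slip is calling the form $\eps(ab)$ symmetric---the paper's Definition~\ref{def:Frobenius} does not assume commutativity, so symmetry is not automatic---but your argument uses only non-degeneracy (which in finite dimensions is equivalent on either side), so nothing is affected.
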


\begin{proof}
Denote by $\eps_i:A_i\lrar K$ the 
counit of $A_i$, $i=1,2$. 
Then $(A_1\dsum A_2,\eps_1+\eps_2)$
is a Frobenius algebra. Similarly, 
$$
\eps_1\cdot \eps_2:A_1\tensor A_2
\owns u_1\tensor u_2 \longmapsto \eps_1(u_1)\eps_2(u_2)
\in K
$$
makes 
$A_1\tensor A_2$ a Frobenius algebra.
\end{proof}

\begin{ex} The one-dimensional vector 
space $A=K$, with the
identity map $\eps(u) = u$, is a simple commutative
Frobenius algebra. More generally, by taking
the direct sum $A=K^{\dsum n}$ and defining
$\eps(u_1,\dots,u_n) = u_1+\cdots +u_n\in K$, 
we construct a \emph{semi-simple} commutative
Frobenius algebra $K^{\dsum n}$. 
\end{ex}

\begin{ex}
The full matrix algebra $A=\Mat_n(K)$ consisting of
$n\times n$ matrices with $\eps(u) = \trace(u)$ 
is an example of a non-commutative simple 
Frobenius algebra for $n\ge 2$. 
\end{ex}

\begin{ex}
As mentioned above, the group algebra $A=K[G]$ of 
a finite group $G$ is a Frobenius algebra. Here, we define
$\eps:K[G]\lrar K$ by linearly extending the map
whose value for every $g\in G$ is given by
$$
\eps(g) = \begin{cases}
1 \qquad g=1\\
0 \qquad g\ne 1.
\end{cases}
$$
\end{ex}

\begin{rem}
If we define $\eps(g) = 1$ for all $g\in G$, then the 
group algebra becomes a \textbf{Hopf algebra}. 
\end{rem}

\begin{ex}
The cohomology ring $H^*(M,\bR)$ of an oriented compact 
differential manifold $M$ of $\dim_{\bR} M =n$
with the cup product and 
$$
\eps:H^*(M,\bR)\lrar H^n(M,\bR)=\bR
$$
is a Frobenius algebra.
\end{ex}

The above example makes us wonder if an analogue of
\emph{Poincar\'e pairing} exists in a general 
 Frobenius algebra. Indeed, the counterpart is  a 
\textbf{Frobenius bilinear form} 
defined by
\be
\label{eta}
\eta:A\tensor A\lrar K, \qquad 
\eta(u,v) = \eps (uv)
\ee
in terms of the counit $\eps:A\lrar K$.
The Frobenius form satisfies the Frobenius associativity 
\be
\label{Frobenius form}
\eta(uv,w) = \eta(u,vw), \qquad u,v,w\in A.
\ee
The condition for 
the counit $\eps$ of Definition~\ref{def:Frobenius}
exactly means that
the Frobenius form $\eta$ is non-degenerate. 
It therefore defines a canonical isomorphism 
\be
\label{lam}
\lam:A\overset{\sim}{\lrar}
 A^*, \qquad \la\lam(u),v\ra 
=\eta(u,v), \qquad u,v\in A.
\ee
The  isomorphism  introduces
a unique  
\textbf{comultiplication} 
$\delta:A\lrar A\tensor A$ by
  the
following commutative diagram:
\be
\label{delta}
\quad
\xymatrix{
A\ar[d]_\lam\ar[rr]^\delta && 
A\tensor A\ar[d]^{\lam\tensor\lam \;\;\;  .} 
\\
A^*\ar[rr]_{m^*}&& A^*\tensor A^*
}
\ee
Here, $m^*:A^*\lrar A^*\tensor A^*$ is the 
dual of the multiplication operation
$m:A\tensor A\lrar A$ in $A$. 
Since we are not assuming the multiplication to be
commutative, we define the natural pairing
$$
(A^*\tensor A^*)\tensor (A\tensor A) \lrar K
$$
 by observing the order of entities as written. 
For example, for $\a,\b\in A^*$ and $u,v\in A$,
we calculate 
\be
\label{tensor pairing}
\la\a\tensor \b,u\tensor v\ra = \la \a,\la \b,u\ra v\ra = \la \b,u\ra
\la\a,v\ra.
\ee
As the dual of the associative multiplication,
the comultiplication $\delta$ is \emph{coassociative}, i.e.,
it satisfies 
\be
\label{coassociative}
\xymatrix{
&A\tensor A\ar[dr]^{\delta\tensor 1}&\\
{\phantom{A\tensor}} A {\phantom{\tensor A}} \ar[ur]^{\delta}\ \ar[dr]_{\delta}& 
  & A\tensor A\tensor A .\\
&A\tensor A \ar[ur]_{1 \tensor \delta}
		}
\ee
Note that the identity element $\mathbf{1}\in A$ 
corresponds to $\eps\in A^*$ by $\lam$, i.e., 
$\lam(\mathbf{1})=\eps$. This is because
$$
\eta(\mathbf{1},u) = \eta(u,\mathbf{1}) = \eps (u).
$$
We now have the full set of data 
$(A,\mathbf{1},m,\eps,\delta)$
that defines a bialgebra. The algebra and coalgebra
structures satisfy a compatibility condition, as described
below.

\begin{prop}
\label{prop:m delta}
 The following diagram commutes:
\be
\label{m delta}
\xymatrix{
&A\tensor A\tensor  A\ar[dr]^{m\tensor 1}&\\
A\tensor A \ar[ur]^{1\tensor \delta}\ar[r]^{\;\;\; m} \ar[dr]_{\delta\tensor 1}& 
A \ar[r]^{\delta \;\;\;\;} & A\tensor A .\\
&A\tensor A\tensor A \ar[ur]_{1 \tensor m}
		}
\ee
\end{prop}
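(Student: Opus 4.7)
The strategy is to verify the three equal compositions in \eqref{m delta} as maps $A\tensor A \to A\tensor A$ by pairing each against an arbitrary $a\tensor b\in A\tensor A$ through the isomorphism $\lam\tensor\lam$; non-degeneracy of the pairing then reduces the commutativity of \eqref{m delta} to an identity of numerical scalars, which I plan to derive from Frobenius associativity \eqref{Frobenius form} together with associativity of $A$.

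The preparatory step is to unfold the defining diagram \eqref{delta} for $\delta$ into a pairing identity: combining \eqref{tensor pairing} with $(\lam\tensor\lam)\circ\delta = m^*\circ\lam$ gives
\[
\big\langle(\lam\tensor\lam)\delta(w),\,x\tensor y\big\rangle = \la\lam(w),xy\ra = \eta(w,xy),\qquad w,x,y\in A.
\]
Writing $\delta w = \sum w_1\tensor w_2$ in Sweedler notation and contracting one slot with a fixed element, non-degeneracy of $\eta$ then yields the two reconstruction identities $\sum\eta(w_2,a)\,w_1 = wa$ and $\sum \eta(w_1,b)\,w_2 = bw$, which will be the workhorse of the computation.

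With these tools in hand, I would pair each vertex of \eqref{m delta} evaluated at $u\tensor v$ with $a\tensor b$. The central route $\delta\circ m$ pairs to $\eta(uv,ab)=\eps(uvab)$ on the nose. For the upper route $(m\tensor 1)\circ(1\tensor\delta)(u\tensor v)=\sum u v_1\tensor v_2$, expanding \eqref{tensor pairing} gives $\sum\eta(v_2,a)\eta(uv_1,b)$; applying \eqref{Frobenius form} to rewrite $\eta(uv_1,b)=\eta(u,v_1 b)$ and then pulling the Sweedler sum inside using the first reconstruction identity, the whole expression collapses to $\eta\big(u,(va)b\big)=\eps(uvab)$. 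The lower route $(1\tensor m)\circ(\delta\tensor 1)$ is handled by the mirror computation, using $\eta(u_2 v,a)=\eta(u_2,va)$ instead. Since all three pairings agree for every $a\tensor b$, non-degeneracy forces the three outputs in $A\tensor A$ to coincide, which is exactly the commutativity of \eqref{m delta}.

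The main subtlety—and essentially the only nontrivial point—is that Frobenius associativity \eqref{Frobenius form}, rather than the stronger cyclicity $\eps(xy)=\eps(yx)$ (which is \emph{not} assumed for a general Frobenius algebra), is exactly the ingredient needed to move the multiplicative factors $u$ or $v$ past the Frobenius pairing and access the reconstruction identities. One must also be careful to respect the flip built into the tensor pairing \eqref{tensor pairing} when unfolding each route, so that the reconstruction identity is applied on the correct side; once this bookkeeping is in place, each step is a single-line manipulation.
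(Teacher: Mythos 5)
Your overall strategy is legitimate, and it is essentially the coordinate-free shadow of the paper's own argument: the paper proves the statement by expanding in a basis and invoking the completeness formula \eqref{complete set}, which is precisely your reconstruction identity written in coordinates, and your treatment of the central and upper routes is correct and matches the paper's computation in content.

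The one genuine problem is your second reconstruction identity $\sum\eta(w_1,b)\,w_2 = bw$. From the pairing identity $\sum\eta(w_2,x)\eta(w_1,y)=\eta(w,xy)$ and non-degeneracy you only learn that $u:=\sum\eta(w_1,b)\,w_2$ satisfies $\eta(u,x)=\eta(w,xb)=\eps(wxb)$ for all $x$; to conclude $u=bw$ you would need $\eps(bwx)=\eps(wxb)$, i.e. exactly the cyclicity of $\eps$ that you correctly note is \emph{not} assumed (in general one gets $u=\nu^{-1}(b)\,w$ with $\nu$ the Nakayama automorphism). The proposition and the paper's proof are set up for possibly non-commutative Frobenius algebras --- the paper flags the ordering issue right before \eqref{tensor pairing} --- so this identity cannot be used as stated. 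Fortunately it is also not needed: in the lower route, after your legitimate move $\eta(u_2v,a)=\eta(u_2,va)$, the contraction sits on the $u_2$-slot, so the \emph{first} reconstruction identity already applies and yields $\sum\eta(u_2,va)\eta(u_1,b)=\eta\big(u(va),b\big)=\eps(uvab)$, in full generality. If instead you literally mirror the upper-route computation by contracting on the $u_1$-slot with the second identity, the lower route comes out as $\eps(buva)$, which agrees with $\eps(uvab)$ only for commutative (or symmetric) $A$. So delete the second identity (or restate it with the Nakayama twist) and run the lower route through the first identity as indicated; with that repair your proof is correct and equivalent in substance to the paper's.
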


\begin{rem}
Alternatively, we can  define  a Frobenius algebra
as a bialgebra satisfying \eqref{m delta}, 
together with a non-degenerate Frobenius form
satisfying \eqref{Frobenius form}.
\end{rem}

\begin{proof}
Let  $\la e_1,e_2,\dots,e_r\ra$ be a $K$-basis for
$A$, where $r = \dim_K A$. 
The bilinear form $\eta$  
defines $r\times r$  matrices
\be
\label{eta-1}
\eta = [\eta_{ij}], 
\qquad \eta^{-1}=[\eta^{ij}],
\qquad
\eta_{ij}:=\eta(e_i,e_j).
\ee
It gives the canonical basis 
expansion of $v\in A$:
\be
\label{complete set}
v
= \sum_{a,b} \eta(v,e_a)\eta^{ab}e_b = 
\sum_{a,b} \eta(e_a,v)\eta^{ba}e_b.
\ee
From \eqref{delta}, 
we calculate 
\begin{align*}
\delta(v) &= 
\sum_{i,j,a,b}
 \eta(v,e_ie_j) (\eta^{jb}e_b)
 \tensor 
(\eta^{ia} e_a)
\\
&=
\sum_{i,j,a,b}
 \eta(ve_i,e_j) (\eta^{jb}e_b)
 \tensor 
(\eta^{ia} e_a)
\\
&=
\sum_{i,a}
 (ve_i) 
 \tensor 
(\eta^{ia} e_a)
\\
&=
(m\tensor 1) (v\tensor \delta(\mathbf{1})),
\end{align*}
 using the pairing convention \eqref{tensor pairing}
and
\be
\label{delata 1}
\delta(\mathbf{1}) = \sum_{a,b}\eta^{ab}e_a\tensor e_b.
\ee
We then have
\begin{align*}
(1\tensor m)\circ (\delta\tensor 1)(u\tensor v) &=
(1\tensor m)\left(\sum_{i,j,a,b}
 \eta(u,e_ie_j) (\eta^{jb}e_b)
 \tensor 
(\eta^{ia} e_a) \tensor v\right)
\\
&=
\sum_{i,j,a,b}
 \eta(ue_i,e_j) (\eta^{jb}e_b)
 \tensor 
(\eta^{ia} e_a v) 
\\
&=
\sum_{i,j,a,b,c,d}
 \eta(ue_i,e_j) (\eta^{jb}e_b)
 \tensor 
\eta^{ia} \eta(e_a v,e_c) \eta^{cd}e_d
\\
&=
\sum_{i,a,c,d}
 ue_i
 \tensor 
\eta^{ia} \eta(e_a, ve_c) \eta^{cd}e_d
\\
&=
\sum_{c,d}
 (uve_c) \tensor\eta^{cd}e_d
\\
&=
(m\tensor 1)(uv\tensor \delta(\mathbf{1}))
=\delta(uv).
\end{align*}
Similarly,
\begin{align*}
(m\tensor 1)\circ (1\tensor \delta)(u\tensor v) &=
(m\tensor 1)\left(u\tensor \sum_{i,j,a,b}
 \eta(v,e_ie_j) (\eta^{jb}e_b)
 \tensor 
(\eta^{ia} e_a) \right)
\\
&=
\sum_{i,j,a,b}
u \eta(ve_i,e_j) (\eta^{jb}e_b)
 \tensor 
(\eta^{ia} e_a ) 
\\
&=
\sum_{i,a}
uve_i
 \tensor 
\eta^{ia} e_a
\\
&=
(m\tensor 1)(uv\tensor \delta(\mathbf{1}))
=\delta(uv).
\end{align*}
This completes the proof of Proposition~\ref{prop:m delta}.
\end{proof}

\begin{rem}
We note that if $A$ is  commutative, 
then the quantities 
\be
\label{symmetric}
\eta\big(e_{i_1}\cdots
e_{i_{j}},e_{i_{j+1}}\cdots e_n\big)
=\epsilon(e_{i_1}\cdots e_{i_n}),
\qquad 1\le j<n,
\ee
are completely symmetric with respect to permutations
of  indices. 
\end{rem}

\begin{Def}[Euler element]
The \textbf{Euler element} of a Frobenius 
algebra $A$ is defined by
\be\label{Euler}
\mathbf{e}:= m\circ \delta(\mathbf{1}).
\ee
In terms of basis, the Euler element is given by
\be\label{Euler basis}
\mathbf{e} = \sum_{a,b}\eta^{ab} e_ae_b.
\ee
\end{Def}

The Euler element provides the genus expansion 
of 2D TQFT, allowing us to calculate 
higher genus correlation functions from the
genus $0$ part of the theory.

Another application of \eqref{complete set}
is the following formula that relates 
the multiplication and comultiplication:
\be\label{prod=coprod}
\left(\lam(u)\tensor 1\right)\delta(v)
= uv.
\ee
This is because
\begin{align*}
\left(\lam(u)\tensor 1\right)\delta(v)
&=
\left(\lam(u)\tensor 1\right)
\sum_{a,b}
(v\eta^{a b}
e_a)\tensor e_b
\\
&=
\sum_{a,b}
\eta(u,ve_a)\eta^{ab}
 e_b
\\
&=
\sum_{a,b}
\eta(uv,e_a)\eta^{ab}
 e_b
=uv.
\end{align*}


\section{TQFT}
\label{sect:TQFT}

In this section, we briefly review TQFT. 
Although there have been 
explosive mathematical
developments in higher dimensional topological 
quantum field
theories mixing different dimensions
during the last decade (see for example, \cite{CN, L}
and more recent work inspired by them),
we restrict our attention to the two-dimensional
speciality in these lectures. 
From now on, Frobenius algebras we consider are 
finite-dimensional,
unital, associative, and \emph{commutative}.
It has been established (see
\cite{Abrams,Dijkgraaf})
that  2D TQFT's are classified by these types of 
Frobenius algebras.

The axiomatic formulation of conformal and 
topological quantum field theories was discovered 
in the 1980s by Atiyah \cite{Atiyah} and
Segal \cite{Segal}. A $(d-1)$-dimensional
TQFT is a monoidal 
functor $Z$ from the monoidal  
category  of $(d-1)$-dimensional closed (i.e., 
compact without boundary)
oriented smooth manifolds with oriented $d$-dimensional
cobordism as morphisms, to 
the monoidal category of finite-dimensional vector spaces
defined over a field $K$. 
The monoidal structure in the category
of  $(d-1)$-dimensional 
smooth manifolds is defined by the operation 
of taking disjoint union, which is a symmetric operation.
 Disjoint union with the empty set is the
identity  of this operation.
Therefore, we define 
the monoidal category of $K$-vector spaces 
 by \emph{symmetric tensor products}, with the 
field $K$ serving as the identity operation of 
tensor product. The functor $Z$
satisfies the non-triviality condition
$$
Z(\emptyset) = K, 
$$
and maps a 
disjoint union of smooth $(d-1)$-dimensional
manifolds  to a symmetric tensor product of vector spaces.

Let us denote by $M$  an oriented closed smooth manifold
of dimension $d-1$, and by $M^{op}$ the same 
manifold with the opposite orientation. 
The functor $Z$ satisfies the duality condition
$$
Z(M^{op}) = Z(M)^*,
$$
where $Z(M)^*$ is the dual vector space of $Z(M)$. 
Suppose  we have an oriented 
bordered $d$-dimensional smooth manifold $N$. 
The boundary 
$\partial N$ is a smooth manifold
of dimension $d-1$, and the complement 
$N\setminus \partial N$ is an oriented smooth
manifold of dimension $d$. We give the  orientation 
induced from $N$ to its boundary $\partial N$. 
The TQFT functor $Z$ then gives an element 
$$
Z(N) \in Z(\partial N).
$$
If $N$ is closed, i.e., $\partial N = \emptyset$, then 
$$
Z(N) \in Z(\emptyset) = K
$$
is a number that represents a topological invariant of $N$.

Now consider a bordered oriented smooth manifold
$N_1$ with  boundary
$$
\partial N_1 = M_1^{op}\sqcup M_2,
$$
meaning that the two separate boundaries
carry different orientations. We choose that
 $M_2$ is given the induced orientation 
from $N_1$, and $M_1$ the opposite orientation. 
We interpret the situation
as $N_1$ giving an oriented  cobordism 
from $M_1$ to $M_2$.
In this case, the functor $Z$ defines an element
$Z(N_1) \in Z(M_1)^*\tensor Z(M_2)$, or equivalently,
a linear map
$$
Z(N_1):Z(M_1)\lrar Z(M_2).
$$
Suppose we have another smooth manifold $N_2$
with boundary
$$
\partial N_2 = M_2^{op}\sqcup M_3
$$
corresponding to a linear map
$$
Z(N_2):Z(M_2)\lrar Z(M_3).
$$
We can then smoothly glue
$N_1$ and $N_2$ along the common
boundary component $M_2$ forming a new
manifold 
$$
N = N_1 \cup_{M_2} N_2.
$$
Clearly 
$$
\partial N = M_1^{op}\sqcup M_3,
$$
and $N$ gives a cobordism from $M_1$ to $M_3$. 
The Atiyah-Segal sewing axiom \cite{Atiyah}
asserts that 
\be
\label{sewing}
\xymatrix{
Z(M_1) \ar[r]^{Z(N_1)} \ar[d]_{\parallel}&Z(M_2) 
\ar[r]^{Z(N_2)} &Z(M_3)\;  \ar[d] ^{\parallel}
 \\
Z(M_1)\ar[rr]^{Z(N)} && Z(M_3).
		}
\ee

 A $d$-dimensional TQFT $Z$ defines a topological 
 invariant $Z(N)$ for each closed $d$-manifold $N$. 
 The essence of TQFT is to break $N$ into 
 simpler pieces 
 by cutting
 along $(d-1)$-dimensional submanifolds, and 
represent the invariant $Z(N)$ from that of simpler pieces.  
The situation is special for the case of 2D TQFT.
First of all, we already know \emph{all}
topological invariants  of a closed surface. 
They are just functions of the genus $g$ of the surface. 
So there should be no reason for another theory to study them.
Since there is only one $1$-dimensional connected compact
smooth manifold, a 2D TQFT is based on a single 
vector space 
\be
\label{ZS1}
Z(S^1) = A,
\ee
and its tensor products. What else  could we gain?

The biggest surprise is that 
\textbf{a 2D TQFT is further quantized}
\cite{CK,DZ, Givental, KM}. This process starts 
with a Frobenius algebra $A$ of \eqref{A}. 
It is quantized to a quantum cohomology, and then 
further  
to Gromov-Witten invariants. 
Construction of
spectral curves, and then quantizing them,
have a parallelism to this story.  
This is the story we now explore
in  this  and the next sections,
but only to take a snapshot of the \emph{converse}
direction, i.e., from a quantum theory to a classical
theory.

Let us denote by $\S_{g,\bar{m},n}$ a 
connected, bordered, oriented,
smooth surface of genus $g$ with $m+n$ boundary circles. 
This is a surface obtained
by removing $m+n$ disjoint open discs from a 
compact
oriented two-dimensional  smooth manifold 
of genus $g$ without boundary. We assume that
the boundary of $\S_{g,\bar{m},{n}}$ itself is a 
smooth manifold, hence it is a disjoint union 
of $m+n$ circles.  
We give the induced  orientation 
from the surface to  $n$ boundary 
circles, and  the opposite orientation to the
$m$ boundary circles. The 
 surface $\S_{g,\bar{m},{n}}$   
gives a cobordism of  $m$ circles to $n$ circles. 
The orientation-preserving diffeomorphism class of
such a surface is determined by the genus $g$ and the
two number $m$ and $n$ of boundary circles with 
different orientations. Therefore, the oriented equivalence
class of cobordism is also determined by $(g,m,n)$. 
The TQFT functor $Z$ then assigns to 
each cobordism  $\S_{g,\bar{m},{n}}$
  a multilinear map
$$
\o_{g,\bar{m},{n}} \overset{\text{def}}{=}
Z(\S_{g,\bar{m},{n}}) :A^{\tensor m}\lrar
A^{\tensor n},
$$
which is completely determined by the label 
$(g,m,n)$. This is the special situation of the 
two-dimensionality of 
 TQFT, reflecting the simple topological 
classification of surfaces.

Suppose  we have another cobordism 
$\S_{h,\bar{k},m}$  of genus $h$
from 
$k$ circles to $m$ circles,
with  orientation on
the $m$ circles induced from the surface, and
 the opposite orientation on $k$ circles.
We can \emph{compose} two cobordisms,
sewing the $m$ circles of the first cobordism with 
the $m$ circles of the second.  Here, we 
notice that  on each
pair of circles, one from the first surface
and the other from the second, the orientations are 
the same, and hence we can put one on top of
 the other.
 Therefore, the orientations
of the two surfaces are consistent after sewing. This 
sewing process
generates
 a new surface $\S_{g+h+m-1,\bar{k},n}$
 of genus $g+h+m -1$. This is  because
 the $m$ pairs of circles sewed together create
 $m-1$ handles (see Figure~\ref{fig:sewing}).
The \textbf{sewing axiom} of Atiyah-Segal \cite{Atiyah}
then requires that the functor $Z$ associates the 
composition of linear maps to the composition
of cobordisms. 
In our situation,  the composition
of two maps generates a new map
$$
\o_{g,\bar{m},n}
\circ 
\o_{h,\bar{k},m}
=
\o_{g+h+m-1,\bar{k},n}
:A^{\tensor k}\lrar A^{\tensor n},
$$
corresponding to the sewing of cobordisms.

\begin{figure}[htb]
\includegraphics[width=2in]{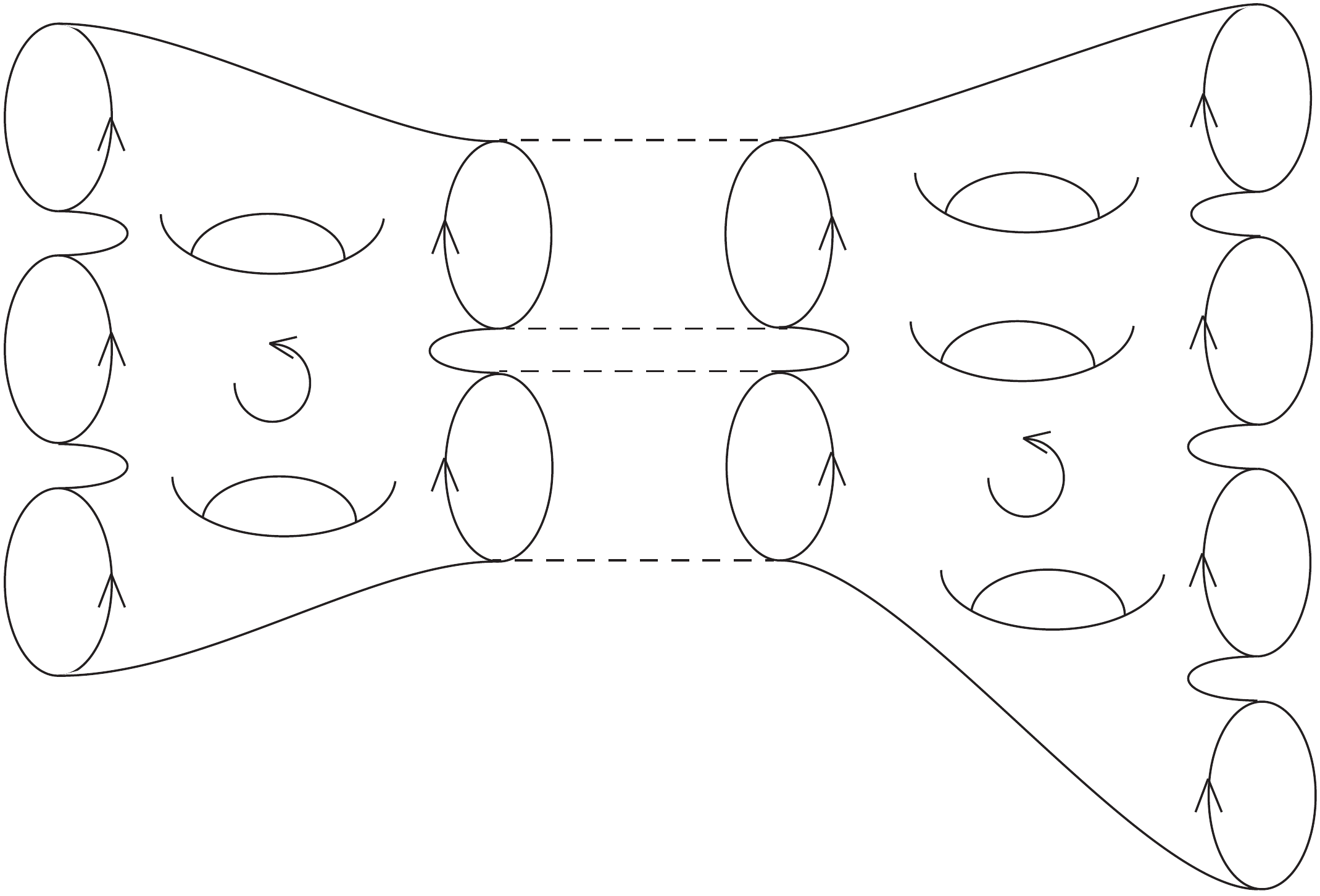}
\caption{A string of \textbf{String Interactions}.
The physics of a simple model for
 interacting strings
is captured by the sewing axiom of Atiyah-Segal.}
\label{fig:sewing}
\end{figure}

The sewing procedure can be generalized, allowing
partial sewing of the boundary circles of matching 
orientation. For example, if $j\le \ell$ and $j\le m$,
then gluing only $j$ pairs of circles, we have a composition
\be\label{partial sewing}
\o_{g,\bar{m},n}
\circ 
\o_{h,\bar{k},\ell}
=
\o_{g+h+j-1,\overline{k+m-j},n+\ell-j}
:A^{\tensor k+m-j}\lrar A^{\tensor n+\ell-j}.
\ee
Furthermore, since TQFT does not require cobordism 
to be given by a  \emph{connected} manifold, we can 
stack up disjoint union of  surfaces and 
apply partial sewing to 
create a variety of linear maps from $A^{\tensor n}$
to $A^{\tensor m}$.

It was Dijkgraaf \cite{Dijkgraaf}
who  noticed the equivalence between 2D TQFTs 
and commutative Frobenius algebras. We can see
the resemblance immediately. On the vector
space $A$, we have operations defined by
\be\begin{aligned}
\label{operations}
&\mathbf{1}=\o_{0,\bar{0},1}:K\lrar A,
&&m=\o_{0,\bar{2},1} :A^{\tensor 2}\lrar A,
\\
& \eps = \o_{0,\bar{1},0}:A\lrar K,
&&\delta = \o_{0,\bar{1},2}:A\lrar A^{\tensor 2},
\\
&\eta = \o_{0,\bar{2},0}:A^{\tensor 2}\lrar K.
\end{aligned}
\ee
A connected cobordism of 
$m$ incoming circles and $n$ outgoing circles 
is classified by the genus $g$ of the surface. It does
not depend on the history of how the cobordisms 
are glued together. 
Thus the diffeomorphism classes of surfaces after 
sewing cobordisms tell us relations
among the operations of \eqref{operations}.  For example,
$$
\o_{0,\bar{1},0}\circ \o_{0,\bar{2},1} =
\o_{0,\bar{2},0} \Longrightarrow \eps\circ m = \eta,
$$
which is  \eqref{eta}. Associativity of 
multiplication comes from uniqueness of the
topology of $\Sigma_{0,\bar{3},1}$ with one boundary
circle
carrying the induced orientation and three  the opposite
orientation. Changing the orientation of each boundary
component to its 
opposite provides coassociativity. In this way 
$A = Z(S^1)$ acquires a bialgebra structure.
To assure duality between algebra and coalgebra structures,
we need to impose another condition here:
non-degeneracy of $\eta=\o_{0,\bar{2},0}$. 
Then $A$ becomes
  a commutative Frobenius algebra. 
  
  Conversely, if we start with a finite-dimenionsal
  commutative
  Frobenius algebra $A$, then we first 
  construct $\o_{0,\bar{m},n}$ on the list of 
  \eqref{operations}. More general 
  maps $\o_{g,\bar{m},n}$ are constructed by
  partial sewing \eqref{partial sewing}. 
  By  construction, all these maps are associated with 
  cobordism of circles, and hence determine a 
  2D TQFT \cite{Abrams}.
  
  With the above  considerations, we give the following
  definition.
  
\begin{Def}[Two-dimensional
 Topological Quantum Field Theory]
 Let $(A,\eps,\lam)$ be a set of data 
 consisting of a 
 finite-dimensional vector space
 $A$ over $K$, a non-trivial linear map $\eps:A\lrar K$,
 and an isomorphism $\lam:A\overset{\sim}{\lrar}A^*$. 
 A \textbf{Two-dimensional Topological 
 Quantum Field Theory} is a system 
 $(A,\{\o_{g,\bar{m},n}\})$
consisting  of  
 linear maps 
 $$
 \o_{g,\bar{k},\ell}:A^{\tensor k}\lrar
A^{\tensor \ell}, \qquad 0\le g,k,\ell,
 $$
 satisfying the following axioms.
 \begin{itemize}
 \item \textbf{TQFT 1. Symmetry:} 
  \be\label{o-gkl}
 \o_{g,\bar{k},\ell}:A^{\tensor k}\lrar
A^{\tensor \ell}
 \ee 
 is symmetric with respect to the $S_k$-action on the 
 domain. 
 \item \textbf{TQFT 2. Non-triviality:}
  \be\label{non-triviality}
 \o_{0,\bar{1},{0}}=\eps: A\lrar K.
 \ee
 \item  \textbf{TQFT 3. Duality:}
  \be\label{duality}
  \xymatrix{
A^{\tensor m} \ar[d]_{\lam^{\tensor m}}
\ar[rr]^{\o_{g,\bar{m},n}} &&A^{\tensor n}  \ar[d] ^{\lam^{\tensor n} \;.}
 \\
(A^*)^{\tensor m}\ar[rr]^{(\o_{g,\bar{n},m})^*} && (A^*)^{\tensor n}
		}
 \ee
 \item \textbf{TQFT 4. Sewing:}
  \be\label{p-sewing}
 \o_{g,\bar{m},n}
\circ 
\o_{h,\bar{k},\ell}
=
\o_{g+h+j-1,\overline{k+m-j},n+\ell-j}
:A^{\tensor k+m-j}\lrar A^{\tensor n+\ell-j},
 \quad j\le m, \; j\le \ell.
\ee
 \end{itemize}
\end{Def}

\begin{rem}
As noted above, the vector space $A$ acquires 
the structure of a commutative Frobenius algebra
from these axioms. Conversely, if $A$ is a commutative
Frobenius algebra, then $\o_{0,\bar{m},n}$ of
\eqref{operations} can be extended to $\o_{g,\bar{m},n}$
that satisfy the TQFT axioms. 
\end{rem}


\section{Cohomological field theory}
\label{sect:CohFT}

Recall that for any oriented closed manifold $X$,
its cohomology ring $H^*(X,K)$ is a Frobenius 
algebra defined over $K$. If we restrict ourselves to
even dimensional manifold $X$ and consider only
even part of the cohomology,
then  $A=H^{even}(X,K)$ is a finite-dimensional
commutative Frobenius algebra. 
This Frobenius algebra naturally defines a
2D TQFT. The role of a TQFT in general dimension
is to represent a topological invariant of higher dimensional
manifolds. We are now seeing  that 
the \emph{classical} topology
of a manifold $X$ is a 2D TQFT.
Changing our point of view, we can ask
if we start with $X$, then how do we find the
corresponding 2D TQFT? Of course the Frobenius
algebra structure automatically determine 
the unique 2D TQFT as we have seen in 
Section~\ref{sect:TQFT}. But then the 
picture of sewing cobordisms is lost in this algebraic
formulation. What is the role of the cobordism
of circles in the context of understanding the
manifold $X$?

The amazing vision emerged in the early 1990s is that 
2D TQFT can be further \emph{quantized} into
\textbf{Gromov-Witten theory}, which produces 
\emph{quantum} topological invariants of $X$ that cannot
be reduced to classical invariants. 
This epoch making discovery was one of 
the decisive moments of 
 the fruitful interaction of string theory and
geometry, which is still continuing today.

String theory deals  with flying strings 
in a space-time manifold $X$. The trajectory of 
a moving string is a curved cylinder
 embedded in the
manifold $X$, like a duct pipe in the attic. 
 When  strings are considered as
\emph{quantum} objects, they can interact one another.
For example, Figure~\ref{fig:sewing} can 
be interpreted as  a \emph{string} of
 \textbf{string interactions}. First, three strings 
 collide in a complicated interaction to produce 
 two strings. The complexity of the interaction is
 classified as ``$g=2$'' in terms of the genus of
 the first surface. These two strings created in the
 first interaction then collide in an even more
 complicated (i.e., $g=3$) interaction to produce 
 four strings in the end. The trajectory of these interactions
 then forms a bordered surface of genus $6$ with
 $7$ boundary components embedded in the 
 space-time $X$.
 
 Models for string theory were originally introduced
 to understand how the space-time $X$ dictates 
 string interactions. In this process, the importance
 of Calabi-Yau spaces  was recognized,
 through  considerations on the consistency of 
 string theory as a physical theory. 
 By changing the point of view $180^\circ$, 
 researchers then noticed that a simple model of 
 string interactions could be used to obtain 
 a totally new class of
 topological invariants of $X$ itself. 
 These new invariants are not 
 necessarily captured by 
 the classical topology, such as (co)homology groups
 and homotopy groups. We use the terminology of
 \textbf{quantum topological invariants}
 for the invariants obtained by string theory 
 considerations. 
 
 Gromov-Witten theory is a mathematically rigorous
 interpretation of a model for a quantum string theory.
 A trajectory of interacting strings in $X$ is the image of 
 a map 
 \be\label{string map}
 f: \S_{g,\bar{m},n}\lrar X
 \ee
 from a bordered surface $\S_{g,\bar{m},n}$
 into $X$. Recall that the fundamental group
 $\pi_1(X,x_0)$ of $X$ is determined by the connectivity
 of the pointed loop space
 $$
 \cL(X,x_0) = \cC^\infty\big((S^1,*),(X,x_0)\big),
 $$
 which is the \emph{moduli space} of differentiable 
 maps from 
 $S^1$ to $X$ that send a reference
 point $*\in S^1$ to $x_0\in X$. In the same spirit,
 Gromov-Witten invariants are defined by looking at
 the \emph{classical}
 topological invariants of appropriate 
 moduli spaces of maps from $\S_{g,\bar{m},n}$
 to $X$.
 When we have the notion of \emph{size}
  of a string,  
 the trajectory $\S_{g,\bar{m},n}$ has a metric on it. 
 If the ambient manifold $X$ has a geometric structure,
 such as a symplectic structure, complex structure, 
 or a K\"ahler structure, then the map $f$ needs to 
 be compatible with the structures of the source and the
 target. For example, if $X$ is K\"ahler, 
 then we give a complex structure on $\S_{g,\bar{m},n}$
 that is compatible with the metric it has, and 
 require that $f$ is a holomorphic map.
 Technical difficulties arise  in defining
  moduli spaces of such maps. Even the moduli spaces are
 defined, they are often very different 
 objects from the usual
 differentiable manifolds. We need to extend 
 the notion of manifolds here. Thus identifying 
 reasonable classical topological invariants of 
 these spaces  also poses a difficult problem.
 
 The simplest scenario is the following: We take $X=pt$ 
 to be just a single point. Of course nobody wants to 
 know the topological structure of a point. We all know it!
 By exploring the Gromov-Witten theory of a point,
 we learn the structure of the theory itself. Since we 
 can map \emph{anything} to a point, a point may not be
 such a simple object, after all. 
 It is like considering a \emph{vacuum} in physics. 
 Again, anything can be thrown into a vacuum. 
 A quantum theory of a vacuum is inevitably a rich
 theory.
 
 When $X=pt$ is a point, we consider 
 all incoming and outgoing strings to be infinitesimally
 small. Thus the surfaces we are considering become
 closed, and boundary components
 are just several points identified on them.
 A metric on a closed surface naturally gives rise to 
 a unique complex structure, making it a 
 compact \emph{Riemann surface}. And every
 compact Riemann surface acquires a unique
 \emph{projective algebraic} structure, making it a projective
 algebraic curve. 
 Since we are allowing strings to become infinitesimally
 small during the interaction, the trajectory may 
 contain a process of an embedded $S^1$ in the surface
 shrinking to a point, and then becoming a finite 
 size circle again a moment later. 
 Such a process can be understood in complex algebraic 
 geometry as a \textbf{nodal singularity}
 of an algebraic curve. Locally, every nodal singularity
 of a curve is the same as the neighborhood of the 
 origin of a singular algebraic curve
 $$
 \{(x,y)\in \bC^2\;|\;xy = 0\}.
 $$
 We are talking about an abstract notion of 
 trajectories here, because nothing can move in 
 $X=pt$. In terms of the map idea \eqref{string map}, 
 we note that there is a \emph{unique} map $f$
 from any projective algebraic curve to a point, which
 satisfies any reasonable requirement of maps such as
 being a morphism of algebraic varieties. 
 Then the moduli space of all maps simply means
 the moduli space of the source. 
 
 A \textbf{stable curve} is a projective algebraic
 curve with only nodal singularities and 
 a finite number of smooth points marked on the curve
 such that it has only a finite number of algebraic
  automorphisms fixing each
  marked point. We recall that the holomorphic automorphism 
 group of $\bP^1$, 
 $$
 \Aut(\bP^1) = PSL_2(\bC),
 $$
 acts on $\bP^1$ triply transitively. 
 The number of automorphisms
 fixing  up to $2$ points is always infinity.
 Since $PSL_2(\bC)$ is compact and 
 three-dimensional, if we 
 choose three distinct points on $\bP^1$ and 
 require the automorphism to fix each of these points,
 then we have only finitely many choices. Actually 
 in our case, it is unique.  
 For an elliptic curve $E$, since it is an abelian group, 
 $E$ acts on $E$ transitively. To avoid these translations,
 we need to choose a point on $E$. We can naturally
 identify it as the identity element of the elliptic curve
as a group. Automorphisms of an elliptic curve fixing 
a point then form a finite group. 
If a  compact Riemann surface $C$ has genus
$g=g(C)\ge 2$, then it is known that the order
of the analytic automorphism group is bounded by
$$
|\Aut(C)|\le 84(g-1).
$$
This bound comes from hyperbolic geometry. 
It is easy to see the finiteness. First, we note that
 universal covering of $C$ is the upper half plane
$$
\bH = \{z\in \bC\;|\;Im(z)>0\},
$$
and $C$ is constructed by the quotient
$$
C \overset{\sim}{\lrar} \bH/\rho(\pi_1(C))
$$
through a faithful representation 
\be\label{rho}
\rho:\pi_1(C)\lrar PSL_2(\bR) = \Aut(\bH)
\ee
of the fundamental group of $C$ into the automorphism 
group of $\bH$. 
Every holomorphic automorphism 
of $C$  extends to an 
automorphism of $\bH$. 
We know that $C$ does not have
any non-trivial holomorphic vector field $v$. 
If it did, then $v$ would be a differentiable 
vector field with isolated zeros, and each zero
comes with positive index, because locally 
it is given by $z^n d/dz$. We learn from topology 
that the sum of the indices of isolated 
zeros of a vector field on $C$ is equal to $\chi(C)
= 2-2g<0$. It is a contradiction. Thus $\Aut(C)\subset 
PSL_2(C)$
is a discrete subgroup. 
Since $PSL_2(\bR)$ is compact, $|\Aut(C)|$ is   finite. 

An algebraic curve $C$
is stable if and only if (1)  every singularity is nodal,
and (2) 
every irreducible component $C'$ of $C$
has a finite number of 
automorphisms. The second condition
 means that the total number of
smooth marked points and  singular points of $C$
that are on $C'$  has to be $3$ or more if 
$g(C')=0$, and $1$ or more if $g(C') = 1$. 
There is no condition for an irreducible component of
genus two or more.

 For a pair $(g,n)$ of integers $g\ge 0$ and $n\ge 1$
in the stable range $2g-2+n>0$, we denote by
 $\Mbar_{g,n}$  the moduli space
of stable curves of genus $g$ and $n$ smooth
marked points. It is a complex orbifold of
dimension $3g-3+n$. Quantum topological 
invariants of a point $X=pt$ is then realized as 
classical topological invariants of $\Mbar_{g,n}$.
Alas to this day, we cannot identify the cohomology 
groups $H^i(\Mbar_{g,n},\bQ)$ for all values of 
$i, g, n$.
From the very definition of these moduli spaces,
we can construct many concrete cohomology classes
on each of $\Mbar_{g,n}$,
called \emph{tautological}
classes. What we do not know is 
indeed how much more we need to know to determine
all of $H^i(\Mbar_{g,n},\bQ)$.
The surprise of Witten's conjecture \cite{W1991},
proved in \cite{K1992},
is that we can actually explicitly write 
 intersection relations of certain
tautological classes 
for \emph{all} values of $g$ and $n$.

Among many different proofs available for
the Witten conjecture (see for example,
\cite{KL,K1992,Mir1,Mir2,MZ,OP}),
\cite{Mir2,MZ} deal with recursive relations 
among  surfaces much in the same spirit of TQFT. 
These relations are first noticed in \cite{DVV}. 
We will discuss these relations in connection to 
topological recursion later in these lectures.

Geometric relations among $\Mbar_{g,n}$'s 
for different values of $(g,n)$ have a simple meaning. 
Let us denote by $\cM_{g,n}$ the moduli space
of smooth $n$-pointed curves. Then the boundary
$$
\Mbar_{g,n}\setminus \cM_{g,n}
$$ 
consists of
points representing singular curves.  Simplest 
singular stable curve has one nodal singularity,
which can be described as 
collision of two smooth points. Analyzing how 
these singularities occur via degeneration, 
we come up with three types of natural morphisms 
 among the
moduli spaces $\Mbar_{g,n}$. They are
the forgetful morphisms
\be\label{pi}
\pi:\Mbar_{g,n+1}\lrar \Mbar_{g,n}
\ee
which simply erase one of the marked points on 
a stable curve,
and gluing morphisms 
\begin{align}
\label{gl1}
&gl_1:\Mbar_{g-1,n+2}\lrar \Mbar_{g,n}
\\
\label{gl2}
&gl_2:\Mbar_{g_1,n_1+1}\times
 \Mbar_{g_2,n_2+1}\lrar \Mbar_{g_1+g_2,n_1+n_2}
\end{align}
that construct boundary strata of 
$\Mbar_{g,n}$. 
Under a gluing morphism, we put two smooth points
of stable curves together to form a one nodal
singularity. The first one $gl_1$ 
glues two points on the same curve together,
and $gl_2$ one each on two curves.

The fiber of $\pi$ at a stable curve $(C,p_1,\dots,p_n)
\in \Mbar_{g,n}$ is the curve $C$ itself, because 
another marked point can be placed
anywhere on $C$. 
Thus $\pi$ is a \emph{universal family} of curves
parameterized by the base moduli space $\Mbar_{g,n}$.
When we place the extra marked point $p_{n+1}$
at $p_i$, $i=1,\dots,n$, then as the point of
$\Mbar_{g,n+1}$ on the fiber of $\pi$, the data 
represented is a singular curve obtained by
joining $C$ itself with a $\bP^1$ at the location
of $p_i\in C$, but the two marked points $p_i,p_{n+1}$
are
actually placed on the line $\bP^1$. Assigning 
this singular curve to $(C,p_1,\dots,p_n)$ defines
a section 
$$
\sigma_i:\Mbar_{g,n}\lrar \Mbar_{g,n+1},
$$
which is a right inverse of $\pi$. Geometrically, 
$\sigma_i$ sends $(C,p_1,\dots,p_n)$ to the point $p_i$
on the fiber $C=\pi^{-1}(C,p_1,\dots,p_n)$.
Obviously, 
$$
\pi\circ \sigma_i :\Mbar_{g,n}\lrar \Mbar_{g,n}
$$
is the identity map.

 Gromov-Witten theory
  for a K\"ahler manifold
 $X$  \cite{CK} concerns topological structure of 
 the moduli space 
 $$
 \Mbar_{g,n}(X,\b) =\big\{f:
 (C, p_1,\dots,p_n)\lrar X\;\big|\; [f(C)] = \b \big\}
 $$
 of \emph{stable} holomorphic maps $f$
 from a  nodal curve $C$ with $n$ smooth marked 
 points $p_1,\dots,p_n\in C$ to $X$
 such that the homology class of the image $f(C)$
 agrees with a prescribed homology class
 $\b\in H_2(X,\bZ)$. Here, stability of a 
 map $f$ is again defined by imposing the 
 finiteness of possible automorphisms. 
 Giving a definition 
 of this moduli space
 is beyond our scope of this article. 
 We refer to \cite{CK}. The moduli space, if defined,
 should come with  natural maps
 $$
 \begin{CD}
  \Mbar_{g,n}(X,\b) @>ev_i>>X\\
  @V\phi VV\\
  \Mbar_{g,n},
 \end{CD}
 $$
 where the forgetful map 
 $\phi$  assigns the \emph{stabilization}
 of  the source $(C,p_1,\dots,p_n)$ to
 the map $f$  by forgetting about the
 map itself, and 
 $$
 ev_i\big(f:(C,p_1,\dots,p_n)\lrar X\big) 
 = f(p_i)\in X, \qquad i=1, \dots,n,
 $$
 is the value of $f$ at the $i$-th marked point
 $p_i\in C$. Stabilization  means that evey
 irreducible component of $(C,p_1,\dots,p_n)$
 that is not stable is shrunk to a point.
If we indeed know the moduli
 space $\Mbar_{g,n}(X,\b)$ and that its 
 cohomology theory behaves as
 we expect, then we would have
 $$
 \begin{CD}
  H^*\big(\Mbar_{g,n}(X,\b),\bQ\big)@<ev_i^*<<
  H^*(X,\bQ)\\
  @V \phi_! VV\\
 H^*(\Mbar_{g,n},\bQ),
 \end{CD}
 $$
 where $\phi_!$ is the \emph{Gysin map}
 defined by integration along fiber. 
 If $\Mbar_{g,n}(X,\b)$ were a manifold, 
 and the
 map $\phi:\Mbar_{g,n}(X,\b)\lrar\Mbar_{g,n}$
 were a fiber bundle,
 then $\Mbar_{g,n}(X,\b)$ would have been  locally
 a direct product, 
 hence the Gysin map $\phi_!$ associated
 with  $\phi$
 would be defined by integrating 
 de Rham cohomology classes of $\Mbar_{g,n}(X,\b)$
 along  fiber of $\phi$. 
Choose any cohomology classes
 $v_1,\dots v_n\in H^*(X,\bQ)$   of $X$. We 
 then could have defined
 the  \textbf{Gromov-Witten invariants} by 
$$
 GW^{X,\b}_{g,n}(v_1,\dots,v_n)
 := \int_{\Mbar_{g,n}} \phi_!\big(ev^*_1(v_1)
 \cdots ev^*_n(v_n)\big).
 $$
 In general, however, construction of the Gysin map does not 
 work as we hope. This is due to the complicated 
 nature of the moduli space $\Mbar_{g,n}(X,\b)$,
 which often has components of \emph{unexpected}
 dimensions. The remedy is to define the
 \emph{virtual fundamental class}
 $[\Mbar_{g,n}(X,\b)]^{vir}$ of the expected
 dimension,
 and avoid the use of $\phi_!$
 by defining
   \be\label{GW}
 GW^{X,\b}_{g,n}(v_1,\dots,v_n)
 := \int_{[\Mbar_{g,n}(X,\b)]^{vir}} ev^*_1(v_1)
 \cdots ev^*_n(v_n).
 \ee
 See \cite{CK} for more detail.

Now recall that $A=H^{even}(X,\bQ)$ is a Frobenius algebra. 
Although Gromov-Witten theory goes through
a big black box $[\Mbar_{g,n}(X,\b)]^{vir}$, 
what we wish is a map
$$
\O_{g,n}:H^{even}(X,\bQ)^{\tensor n}
\lrar H^*(\Mbar_{g,n},\bQ)
$$
whose integral 
over $\Mbar_{g,n}$ 
gives the quantum invariants of $X$.
Then what are the properties that 
 Gromov-Witten invariants should satisfy? Can we
list the properties as axioms for the above map $\O_{g,n}$
so that we can characterize Gromov-Witten invariants?
 This was one of  the motivations of Kontsevich and 
 Manin  to introduce CohFT in \cite{KM}.

As we see below,
a 2D TQFT can be obtained as a special case of 
a CohFT. The amazing relation between 
TQFT and CohFT, i.e.,
 the \emph{reconstruction} of 
CohFT from its  restriction to TQFT
 due to Givental and Teleman
\cite{Givental, Teleman},
 plays a key role
in many new developments
 (see for example, \cite{AGP, DOSS, FLZ, 
MOPPZ}), some of which are deeply related
with topological recursion.

\begin{Def}[Cohomological Field Theory
\cite{KM}]
Let $A$ be a finite-dimensional, unital,
associative, and commutative Frobenius algebra with 
a basis $\{ e_1,\dots, e_r\}$.
A  \textbf{Cohomological 
Field Theory} is a system $(A,\{\O_{g,n}\})$
consisting of linear maps
\be\label{CohFT}
\Omega_{g,n}:A^{\tensor n}\lrar 
H^*(\Mbar_{g,n},K)
\ee
defined for $(g,n)$ in the stable range $2g-2+n>0$
and
 satisfying the following axioms:
\begin{align*}
&\textbf{CohFT 0:}\quad \Omega_{g,n} \text{ is 
$S_n$-invariant, and }
 \Omega_{0,3}(v_1,v_2,v_3) = 
 \eta(v_1v_2, v_3).
\\
&\textbf{CohFT 1:} \quad
\Omega_{g,n+1}(v_1,\dots,v_n,\mathbf{1}) =
 \pi^*\Omega_{g,n}(v_1,\dots,v_n).
 \\
&\textbf{CohFT 2:}\quad
gl_1 ^*\Omega_{g,n}(v_1,\dots,v_n)
=\sum_{a,b}\Omega_{g-1,n+2}
(v_1,\dots,v_n,e_a,e_b)\eta^{ab}.
\\
&\textbf{CohFT 3:}\quad
gl_2^*\Omega_{g_1+g_2,|I|+|J|}(v_I,v_J)
=\sum_{a,b}
\eta^{ab}\Omega_{g_1,|I|+1}(v_I,e_a)
\tensor
\Omega_{g_2,|J|+1}(v_J,e_b),
\end{align*}
where $I\sqcup J = \{1,\dots,n\}$
is a disjoint partition of the index set, and the tensor
product operation on the 
right-hand side   is performed via the K\"unneth 
formula of 
 cohomology rings 
 $$
 H^*(\Mbar_{g_1,n_1+1}\times
 \Mbar_{g_2,n_2+1},K) \isom
 H^*(\Mbar_{g_1,n_1+1},K)\tensor 
 H^*( \Mbar_{g_2,n_2+1},K).
 $$
\end{Def}

\begin{rem}
The condition $ \Omega_{0,3}(v_1,v_2,v_3) = 
 \eta(v_1v_2, v_3)$
 says that the product of the Frobenius algebra
 is determined by the $(0,3)$-value of the
 Gromov-Witten invariants. 
\end{rem}

\begin{prop}[2D TQFT is a CohFT]
Every 2D TQFT is a CohFT that takes values
in $H^0(\Mbar_{g,n},K)$. More precisely, 
let $(A, \o_{g,\bar{m},n})$ be a 2D TQFT. 
Then $\o_{g,n} = \o_{g,\bar{n},0}$ satisfies the 
CohFT axioms, by identifying $K=H^0(\Mbar_{g,n},K)$.
\end{prop}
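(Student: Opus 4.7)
The plan is to exploit the fact that $\Mbar_{g,n}$ is connected, so that $H^0(\Mbar_{g,n},K)\isom K$ canonically, and under this identification the three pullbacks $\pi^*$, $gl_1^*$, $gl_2^*$ all reduce to the identity on scalars (and the K\"unneth isomorphism $H^0\tensor H^0\isom H^0$ reduces the tensor product in CohFT 3 to scalar multiplication). Each CohFT axiom therefore collapses to a linear identity among the maps $\o_{g,n}=\o_{g,\bar{n},0}:A^{\tensor n}\to K$, and the whole verification becomes a systematic application of the sewing axiom TQFT 4. The dictionary I will use is: capping a boundary circle by a disk corresponds to sewing against $\mathbf{1}=\o_{0,\bar{0},1}$; creating a new handle on a single surface (gluing two of its boundary circles) corresponds to sewing against $\o_{0,\bar{0},2}:=\delta\circ\mathbf{1}$, which evaluates to $\delta(\mathbf{1})=\sum_{a,b}\eta^{ab}e_a\tensor e_b$ by (\ref{delata 1}); and joining two disjoint surfaces along a pair of boundary circles corresponds to sewing with $j=1$.

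I would then verify the axioms in turn. CohFT 0 is immediate: the $S_n$-symmetry of $\o_{g,n}$ is TQFT 1, and the normalization $\o_{0,3}(v_1,v_2,v_3)=\eta(v_1v_2,v_3)$ comes from applying sewing to $\o_{0,\bar{2},0}\circ\o_{0,\bar{2},1}$ with $j=1$, which by (\ref{p-sewing}) equals $\o_{0,\bar{3},0}$. CohFT 1 is analogous: sewing $\o_{g,\overline{n+1},0}\circ\o_{0,\bar{0},1}$ with $j=1$ returns $\o_{g,\bar{n},0}$, encoding the topological fact that capping one hole by a disk reduces $\o_{g,n+1}$ to $\o_{g,n}$.

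For CohFT 2, I would rewrite the right-hand side as $\o_{g-1,\overline{n+2},0}\bigl(v_1,\dots,v_n,\delta(\mathbf{1})\bigr)$, which is exactly the sewing $\o_{g-1,\overline{n+2},0}\circ\o_{0,\bar{0},2}$ with $j=2$; formula (\ref{p-sewing}) then yields $\o_{g,\bar{n},0}$, matching the left-hand side. For CohFT 3, I would first use the basis expansion (\ref{complete set}) together with the identity $\o_{g_1,\overline{|I|+1},0}(v_I,e_a)=\eta\bigl(\o_{g_1,\bar{|I|},1}(v_I),e_a\bigr)$ (itself a one-step sewing of $\eta=\o_{0,\bar{2},0}$ against $\o_{g_1,\bar{|I|},1}$) to rewrite the right-hand side as
\[
\o_{g_2,\overline{|J|+1},0}\bigl(v_J,\;\o_{g_1,\bar{|I|},1}(v_I)\bigr).
\]
Sewing $\o_{g_2,\overline{|J|+1},0}\circ\o_{g_1,\bar{|I|},1}$ with $j=1$ then produces $\o_{g_1+g_2,\bar{n},0}$, matching $\o_{g_1+g_2,n}(v_I,v_J)$.

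The main obstacle I anticipate is the careful index bookkeeping in the sewing formula. One must distinguish sewings that add a handle to a single surface (two boundary circles merged, genus rising by $j-1=1$) from sewings that merely join two disjoint surfaces along one circle each (genus rising by $j-1=0$), and one must invoke the non-degenerate Frobenius form $\eta$ together with the basis expansion (\ref{complete set}) whenever an intermediate element of $A$ needs to be resolved in order to pass between maps of type $\o_{g,\bar{k},1}$ and $\o_{g,\overline{k+1},0}$.
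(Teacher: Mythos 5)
Your proposal is correct and follows essentially the same route as the paper: connectedness of $\Mbar_{g,n}$ reduces the CohFT axioms to scalar identities among the maps $\o_{g,\bar{n},0}$, which are then verified from the sewing axiom TQFT 4 using $\o_{0,\bar{0},1}=\mathbf{1}$ and $\o_{0,\bar{0},2}(1)=\delta(\mathbf{1})=\sum_{a,b}\eta^{ab}e_a\tensor e_b$. The only tactical differences are that the paper obtains the normalization $\o_{0,3}(v_1,v_2,v_3)=\eta(v_1v_2,v_3)$ via the duality diagram reconstructing $\o_{0,\bar{2},1}$ from $\o_{0,3}$, and proves CohFT 3 by sewing the disjoint union of the two surfaces simultaneously onto the two outputs of $\o_{0,\bar{0},2}$, whereas you derive the normalization by directly sewing $\eta$ against $m$ and handle CohFT 3 by resolving the intermediate copy of $A$ with the basis expansion \eqref{complete set} followed by a single connected $j=1$ sewing --- both variants are valid.
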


\begin{proof}
Since $\Mbar_{g,n}$ is
connected, the three types of morphisms
\eqref{pi}, \eqref{gl1}, and \eqref{gl2} all produce
isomorphisms of degree $0$ cohomologies. Thus 
the  axioms CohFT 1--3 become
\begin{align}
\label{o-1}
&
\o_{g,n+1}(v_1,\dots,v_n,\mathbf{1}) =
 \o_{g,n}(v_1,\dots,v_n),
 \\
 \label{o-2}
&
\o_{g,n}(v_1,\dots,v_n)
=\sum_{a,b}\o_{g-1,n+2}
(v_1,\dots,v_n,e_a,e_b)\eta^{ab},
\\
\label{o-3}
&
\o_{g_1+g_2,|I|+|J|}(v_I,v_J)
=\sum_{a,b}
\eta^{ab}\o_{g_1,|I|+1}(v_I,e_a)
\cdot
\o_{g_2,|J|+1}(v_J,e_b).
\end{align}
We wish to show  that \eqref{o-1}-\eqref{o-3} are
consequences of the partial sewing
axiom  \eqref{p-sewing} of TQFT
under the identification $\o_{g,n} = \o_{g,\bar{n},0}$.

Since $\o_{0,\bar{0},1} = \mathbf{1}$, 
we have
$$
 \o_{g,\overline{n+1},0}\circ_{(n+1)}\o_{0,\bar{0},1}
=\o_{g,\bar{n},0}=\o_{g,n}:A^{\tensor n}\lrar K
$$
from \eqref{p-sewing}, which is \eqref{o-1}. Here, 
$\circ_{(n+1)}$ is the composition  taken 
at the $(n+1)$-th 
slot of the input variables of $\o_{g,\overline{n+1},0}$.
Next, we need to
 identify $\o_{0,\bar{0},2}:K\lrar A\tensor A$.
 Since 
 $\o_{0,\bar{0},2}=\o_{0,\bar{1},2}\circ\o_{0,\bar{0},1}$,
 we see that $\o_{0,\bar{0},2}(1) = \delta(\mathbf{1})
 =\sum_{a,b}\eta^{ab} e_a\tensor e_b$.
 Denoting by $\circ_{(n+1,n+2)}$ to indicate 
 composition taking at the last two slots of variables,
 we have \eqref{o-2}
 $$
 \o_{g-1,\overline{n+2},0}\circ_{(n+1,n+2)}
 \o_{0,\bar{0},2}=\o_{g,\bar{n},0}:A^{\tensor n}\lrar K.
 $$
 If we have two disjoint sets of variables $v_I$ and $v_J$,
 then we can apply composition
 of  $\o_{0,\bar{0},2}$ 
 simultaneously to two different maps. For example,
 we have
 $$
 \big(\o_{g_1,\overline{|I|+1},0}\tensor\o_{g_2,\overline{|J|+1},0}\big)\circ \o_{0,\bar{0},2} = 
 \o_{g_1+g_2,\overline{|I|+|J|},0}:
 A^{\tensor (|I|+|J|)}\lrar K,
 $$
which is \eqref{o-3}.

 Notice that a linear map
 $$
 \o_{g,m+n}:A^{\tensor m}\tensor A^{\tensor n}\lrar K
 $$ 
 is equivalent to $A^{\tensor m}\lrar (A^*)^{\tensor n}$. 
 Thus we can re-construct a map $\o_{g,\bar{m},n}$
 from $\o_{g,m+n}$
 by
 \be\label{extension}
  \xymatrix{
A^{\tensor m} \ar[d]_{\parallel}
\ar[rr]^{\o_{g,m+n}} &&(A^*)^{\tensor n}
\ar[rr]^{(\lam^{-1})^{\tensor n}}
&&A^{\tensor n}  \ar[d] ^{\parallel\; ,}
 \\
A^{\tensor m}\ar[rrrr]^{\o_{g,\bar{m},n}} 
&&&& 
A^{\tensor n}
		}
 \ee
 where $\lam:A\overset{\sim}{\lrar}A^*$ is the
 isomorphism of \eqref{lam}.
For the case of $g=0$, $m=2$, and $n=1$, 
\eqref{extension}
implies that
$$
\lam^{-1} \o_{0,3}(v_1,v_2,\;\cdot\;) = 
\o_{0,\bar{2},1}(v_1,v_2) = v_1v_2
$$
for $v_1, v_2\in A$. 
Or equivalently,  we have
\be
\label{TQFT 03}
\o_{0,3}(v_1,v_2,v_3) = \eta(v_1v_2,v_3) = \eps(v_1v_2v_3).
\ee
This completes the proof.
\end{proof}

Conversely,  the degree $0$ part of the cohomology
of a CohFT is a 2D TQFT.

\begin{prop}[Restriction of CohFT to the degree $0$ 
part of the cohomology ring]
Let $(A,\O_{g,n})$ be a CohFT
associated with a Frobenius algebra $A$. The
Frobenius algebra $A$ itself defines
a unique 2D TQFT $(A,\o_{g,\bar{m},n})$. 
Denote by 
$$
r:H^*(\Mbar_{g,n},K) \lrar H^0(\Mbar_{g,n},K)
=K
$$
  the restriction of the cohomology ring to its
degree $0$ component, and  define
\be\label{degree 0}
\o_{g,n} = r\circ \O_{g,n}:A^{\tensor n}\lrar K.
\ee
Then we have the equality of maps
\be\label{equivalence}
\o_{g,n} = \o_{g,\bar{n},0}:A^{\tensor n}\lrar K
\ee
for all $(g,n)$ with $2g-2+n>0$. In other words,
the degree $0$ restriction of a CohFT is the
2D TQFT determined by the Frobenius algebra $A$.
 \end{prop}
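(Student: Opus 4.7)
The plan is to verify that $\o_{g,n} := r\circ \Omega_{g,n}$ satisfies the sewing-type relations \eqref{o-1}, \eqref{o-2}, and \eqref{o-3} together with the base case \eqref{TQFT 03}, and then to invoke uniqueness of the 2D TQFT associated with the Frobenius algebra $A$ to conclude $\o_{g,n}=\o_{g,\bar{n},0}$.

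First I would observe that each of the morphisms $\pi$, $gl_1$, $gl_2$ has connected source and connected target, so each of them induces an isomorphism $K\overset{\sim}{\lrar} K$ on degree-zero cohomology. Applying the restriction $r$ to axioms CohFT 1, CohFT 2, and CohFT 3 therefore collapses the pullback relations on full cohomology into plain scalar identities among the $\o_{g,n}$'s. Concretely, CohFT 1 becomes $\o_{g,n+1}(v_1,\dots,v_n,\mathbf{1}) = \o_{g,n}(v_1,\dots,v_n)$, which is \eqref{o-1}; CohFT 2 and CohFT 3 become \eqref{o-2} and \eqref{o-3} in the same manner. For the base case, $\Mbar_{0,3}$ is a single point, so $H^*(\Mbar_{0,3},K)=K$ and CohFT 0 reads $\o_{0,3}(v_1,v_2,v_3) = \eta(v_1v_2,v_3) = \eps(v_1v_2v_3)$, which matches \eqref{TQFT 03} for $\o_{0,\bar{3},0}$.

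Once these ingredients are collected, I would close the argument by induction on the Euler-characteristic quantity $2g-2+n$. The relations \eqref{o-1}--\eqref{o-3}, combined with the base value $\o_{0,3}=\eta\circ m$, already determine $\o_{g,\bar{n},0}$ uniquely: every stable surface of type $(g,n)$ admits a pair-of-pants decomposition, and the TQFT functor reconstructs $\o_{g,\bar{n},0}$ from that decomposition, which is precisely the content of the previous proposition converting a Frobenius algebra into a 2D TQFT. Since $\o_{g,n} = r\circ \Omega_{g,n}$ obeys the very same recursion with the very same base data, the two maps must coincide.

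The main obstacle is purely bookkeeping rather than conceptual: one must check that the K\"unneth isomorphism implicit in CohFT 3 is compatible with the restriction $r$. This is automatic, because degree-zero cohomology is multiplicative under products, $H^0(X\times Y,K) \isom H^0(X,K)\tensor H^0(Y,K)$, so $r\tensor r$ and $r$ intertwine the two sides of CohFT 3. With that compatibility in hand, together with the fact that any two pair-of-pants decompositions of the same surface give the same answer (guaranteed again by the TQFT-from-Frobenius-algebra proposition), the reduction of $\o_{g,n}$ to $\o_{0,3}$ matches the corresponding reduction of $\o_{g,\bar{n},0}$ term by term, and the equality \eqref{equivalence} follows.
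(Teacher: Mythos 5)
Your argument is essentially the paper's own: restrict the CohFT axioms to $H^0$ using connectedness of the moduli spaces, pin down the base case via $\Mbar_{0,3}=\{pt\}$, and then conclude by induction that the recursions \eqref{o-1}--\eqref{o-3} with this base value force $\o_{g,n}=\o_{g,\bar{n},0}$, invoking the earlier proposition that the Frobenius algebra's TQFT satisfies the same relations. The one concrete slip is your choice of induction variable: under the handle-gluing relation \eqref{o-2} the quantity $2g-2+n$ does \emph{not} decrease, since $2(g-1)-2+(n+2)=2g-2+n$, so an induction literally organized by $2g-2+n$ stalls exactly at that step. The paper inducts on $3g-3+n$ (the dimension of $\Mbar_{g,n}$) and verifies explicitly that both \eqref{o-2} and \eqref{o-3} lower it by $1$; replacing your measure by $3g-3+n$ (or, equivalently, a lexicographic induction that tracks the genus drop in \eqref{o-2}) repairs the argument with no other change, and your pair-of-pants remark is then just a restatement of the same uniqueness mechanism.
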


\begin{proof}
We already know that 
$(A,\o_{g,\bar{m},n})$ defines a CohFT with values
in $H^0(\Mbar_{g,n},K)$. We need to show that
this CohFT is exactly the degree $0$ restriction of 
the \emph{given} CohFT we start with. 

First we  extend $\o_{g,n}$ to the unstable
range by
\be\label{o-0102}
\o_{0,1} = \eps:A\lrar K , \qquad \o_{0,2} = \eta:A^{\tensor 2} 
\lrar K.
\ee
 We then  note that 
 from \eqref{operations} and \eqref{o-0102}, we see
 that
\eqref{equivalence} holds for $\o_{0,1}$ and $\o_{0,2}$. 
 The  general case of \eqref{equivalence}  follows 
 by induction on $3g-3+n$, provided that
 $\o_{0,3}$ is appropriately defined. This is because
 \eqref{o-2} and \eqref{o-3} are induction formula
 recursively generating $\o_{g,n}$ from those with
 smaller values of $g$ and $n$. Since
 \begin{itemize}
 \item Case of \eqref{o-2}: $3g-3+n = [3(g-1)-3 + (n+2)]+1$, 
 \item Case of \eqref{o-3}: $3(g_1+g_2) -3 + |I|+|J|
 = [3g_1-3+|I|+1] + [3g_2-3+|J|+1] +1,$
 \end{itemize}
 we see that the \emph{complexity} $3g-3+n$ is always
 reduced by $1$ in each of the recursive formulas. 
 
Finally,  we see that since
$\Mbar_{0,3}$ is just a point as we have noted
above in the discussion of $\Aut(\bP^1)$, we have
$\o_{0,3}=\O_{0,3}$. Hence 
  \be\label{o-03}
 \o_{0,3}(v_1,v_2,v_3)=\eta(v_1v_2,v_3),
 \ee
which  makes \eqref{equivalence} holds for all values of 
 $(g,n)$. This completes the proof.
\end{proof}

\begin{rem}
The above two propositions show that 
a 2D TQFT can be defined either just by a commutative
Frobenius algebra $A$, by a system of 
maps $\{\o_{g,\bar{m},n}\}$ satisfying the TQFT
axioms,
or the degree $0$ part of a CohFT. From now on,
we use $(A,\o_{g,n})$ to denote a 2D TQFT,
which is less cumbersome and easier to deal with.
\end{rem}

\begin{prop}
The genus $0$ values of a 2D TQFT is given by
\be\label{TQFT 0n}
\o_{0,n}(v_1,\dots,v_n) = 
\epsilon(v_1\cdots v_n).
\ee
\end{prop}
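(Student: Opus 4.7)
The plan is to prove the formula by induction on $n\ge 1$. The base cases are essentially already on record: for $n=1$ and $n=2$ the extensions \eqref{o-0102} give $\omega_{0,1}(v_1)=\epsilon(v_1)$ and $\omega_{0,2}(v_1,v_2)=\eta(v_1,v_2)=\epsilon(v_1v_2)$ via \eqref{eta}, while the case $n=3$ is precisely \eqref{TQFT 03}.

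For the inductive step, assume the formula for all $k$ with $3\le k<n$, where $n\ge 4$. I would apply the gluing identity \eqref{o-3} with the decomposition $I=\{1,2\}$ and $J=\{3,\dots,n\}$, which is the smallest split keeping both factors $\omega_{0,|I|+1}=\omega_{0,3}$ and $\omega_{0,|J|+1}=\omega_{0,n-1}$ in the stable range. This yields
\[
\omega_{0,n}(v_1,\dots,v_n)=\sum_{a,b}\eta^{ab}\,\omega_{0,3}(v_1,v_2,e_a)\,\omega_{0,n-1}(v_3,\dots,v_n,e_b).
\]
By \eqref{TQFT 03} and the inductive hypothesis, each factor equals $\eta$ of a product, namely $\eta(v_1v_2,e_a)$ and $\eta(v_3\cdots v_n,e_b)$.

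To close, I would invoke the completeness relation \eqref{complete set}, $w=\sum_{a,b}\eta(w,e_a)\eta^{ab}e_b$ for every $w\in A$, applied with $w=v_1v_2$. The double sum in the displayed formula then collapses to $\eta(v_1v_2,\,v_3\cdots v_n)$, which by Frobenius associativity \eqref{Frobenius form} equals $\epsilon(v_1v_2\cdots v_n)$, as required.

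No serious obstacle is expected; the only mild care needed is the choice of a split with $|I|,|J|\ge 2$, which is why the induction starts at $n\ge 4$. As a sanity check, the same conclusion can be reached geometrically via the TQFT sewing axiom \eqref{p-sewing}: sewing the pair of pants $\omega_{0,\bar 2,1}=m$ with $\omega_{0,\overline{n-1},0}$ along one boundary gives $\omega_{0,\bar n,0}(v_1,\dots,v_n)=\omega_{0,\overline{n-1},0}(v_1v_2,v_3,\dots,v_n)$, and $n-3$ iterations reduce this to $\omega_{0,\bar 1,0}(v_1\cdots v_n)=\epsilon(v_1\cdots v_n)$, using the identification \eqref{equivalence} throughout.
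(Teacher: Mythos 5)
Your proposal is correct and takes essentially the same route as the paper, whose proof is simply the remark that the formula is a direct consequence of CohFT 3 (the genus-zero gluing relation \eqref{o-3}) together with the completeness relation \eqref{complete set}; your induction on $n$, with the split $I=\{1,2\}$, $J=\{3,\dots,n\}$ and the base case \eqref{TQFT 03}, just spells out that one-line argument in detail.
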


\begin{proof}
This is a direct consequence of CohFT 3 and
\eqref{complete set}.
\end{proof}

One of the original motivations of TQFT
\cite{Atiyah, Segal} is to identify the
\emph{topological invariant} $Z(N)$
of a closed 
manifold $N$. In our current setting, 
it is defined as 
\be\label{g-invariant}
Z(\Sigma_g):= \epsilon\big(
\lam^{-1}(\o_{g,1})\big)
\ee
for a closed oriented surface $\Sigma_g$ of genus
$g$. Here, $\o_{g,1}:A\lrar K$ is 
an element of
$A^*$, and $\lam:A\overset{\sim}{\lrar} A^*$
is the canonical isomorphism \eqref{lam}.

\begin{prop}
\label{prop:g-invariant}
The topological invariant $Z(\Sigma_g)$ of
\eqref{g-invariant} is given by
\be\label{g-invariant formula}
Z(\Sigma_g) = \epsilon(\mathbf{e}^g),
\ee
where $\mathbf{e}\in A$ is the  Euler element
of \eqref{Euler}.
\end{prop}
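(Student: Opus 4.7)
The plan is to compute $\omega_{g,1}(v) \in K$ explicitly for an arbitrary $v \in A$, then identify $\lambda^{-1}(\omega_{g,1}) \in A$ via the definition of $\lambda$. The main tool is the genus-reduction axiom (o-2), which is the 2D TQFT incarnation of the CohFT axiom CohFT 2 and corresponds geometrically to cutting a handle off the surface.

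First, I would apply axiom (o-2) iteratively to $\omega_{g,1}$. Each application trades one unit of genus for two extra marked points, inserting a factor $\sum_{a,b}\eta^{ab}e_a \otimes e_b = \delta(\mathbf{1})$ in the two new slots. After $g$ steps, this yields
\begin{equation*}
\omega_{g,1}(v) \;=\; \sum_{a_1,b_1,\dots,a_g,b_g} \eta^{a_1 b_1}\cdots \eta^{a_g b_g}\,\omega_{0,2g+1}\bigl(v, e_{a_1}, e_{b_1}, \dots, e_{a_g}, e_{b_g}\bigr).
\end{equation*}
I would invoke the genus-$0$ formula \eqref{TQFT 0n} (a consequence of CohFT 3 and \eqref{complete set}) to rewrite the right-hand side as a single counit evaluation:
\begin{equation*}
\omega_{g,1}(v) \;=\; \epsilon\!\left(v \cdot \prod_{i=1}^{g}\sum_{a_i,b_i}\eta^{a_i b_i} e_{a_i} e_{b_i}\right) \;=\; \epsilon\bigl(v\,\mathbf{e}^g\bigr),
\end{equation*}
using the basis formula \eqref{Euler basis} for the Euler element and commutativity of $A$ (which allows the $g$ copies of $\mathbf{e}$ to be gathered). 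This step is essentially a bookkeeping identification and should not pose obstacles.

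Next I would use the definition \eqref{eta} of $\eta$ to rewrite $\epsilon(v\,\mathbf{e}^g) = \eta(v,\mathbf{e}^g)$, so that for every $v \in A$,
\begin{equation*}
\langle \omega_{g,1}, v\rangle \;=\; \eta(\mathbf{e}^g, v) \;=\; \langle \lambda(\mathbf{e}^g), v\rangle,
\end{equation*}
using the defining relation of $\lambda$ in \eqref{lam} together with symmetry of $\eta$. By non-degeneracy of $\eta$ (equivalently, injectivity of $\lambda$), this forces $\lambda^{-1}(\omega_{g,1}) = \mathbf{e}^g$, and substituting into \eqref{g-invariant} gives $Z(\Sigma_g) = \epsilon(\mathbf{e}^g)$.

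The only subtle point is the base case $g=0$, where axiom (o-2) cannot be iterated at all and where $\omega_{0,1}$ is defined via the unstable-range extension \eqref{o-0102} as $\omega_{0,1} = \epsilon$. In that case one checks directly that $\lambda^{-1}(\epsilon) = \mathbf{1}$ since $\lambda(\mathbf{1}) = \epsilon$, so $Z(\Sigma_0) = \epsilon(\mathbf{1}) = \epsilon(\mathbf{e}^0)$, matching the formula. The main \emph{conceptual} obstacle is really just making sure the genus-reduction recursion can be applied $g$ times starting from $(g,1)$ and terminates at the stable genus-$0$ configuration $(0,2g+1)$, which it does since $2g+1\ge 3$ for $g\ge 1$; everything else is a direct computation using axioms already proved.
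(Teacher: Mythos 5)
Your proof is correct, and it reaches the same key identity as the paper, namely $\o_{g,1}(v)=\eta(v,\mathbf{e}^g)$, hence $\lam^{-1}(\o_{g,1})=\mathbf{e}^g$; but the route is organized differently. The paper proves a preliminary lemma identifying $\lam^{-1}(\o_{1,1})=\mathbf{e}$ and then runs an induction on $g$ that peels off one handle at a time: it applies \eqref{o-2} once to pass from $(g,1)$ to $(g-1,3)$, then uses the separating axiom \eqref{o-3} to split off a genus-zero four-point factor, yielding $\o_{g,1}(v)=\o_{g-1,1}(v\mathbf{e})$, and closes the induction with the lemma. You instead iterate the non-separating axiom \eqref{o-2} all $g$ times, collapsing directly to $\o_{0,2g+1}\big(v,e_{a_1},e_{b_1},\dots\big)$, and then invoke the genus-zero formula \eqref{TQFT 0n} once, together with \eqref{Euler basis}, to produce $\epsilon(v\,\mathbf{e}^g)$; this is legitimate because \eqref{TQFT 0n} is established in the paper immediately before the proposition, and your check that every intermediate pair $(g-k,2k+1)$ stays in the stable range is the right thing to verify. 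The trade-off is minor: your version avoids any explicit use of \eqref{o-3} and of the $\o_{1,1}$ lemma inside the proof (that use is hidden in the proof of \eqref{TQFT 0n}), while the paper's induction makes the single-handle identity $\o_{g,1}(v)=\o_{g-1,1}(v\mathbf{e})$ explicit, which is conceptually the statement that cutting a handle multiplies the insertion by $\mathbf{e}$. Your treatment of $g=0$ via \eqref{o-0102} and $\lam(\mathbf{1})=\eps$ is a harmless addition not needed in the paper's induction, which starts at $g=1$.
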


\begin{lem}
We have
\be\label{11=Euler}
\mathbf{e} := m\circ \delta(1) 
= \lam^{-1}(\o_{1,1}).
\ee
\end{lem}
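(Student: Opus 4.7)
The plan is to unfold $\lambda^{-1}(\omega_{1,1})$ via the defining property of $\lambda$ and then reduce $\omega_{1,1}$ to $\omega_{0,3}$ using the sewing axiom, at which point the Euler element appears automatically. Concretely, by \eqref{lam} the element $u = \lambda^{-1}(\omega_{1,1}) \in A$ is characterized by the identity
\[
\eta(u,v) \;=\; \langle \omega_{1,1}, v\rangle \;=\; \omega_{1,1}(v) \qquad \text{for all } v\in A.
\]
Thus it suffices to show $\omega_{1,1}(v) = \epsilon\bigl(\mathbf{e}\cdot v\bigr)$, since then $\eta(\mathbf{e},v) = \epsilon(\mathbf{e}v) = \omega_{1,1}(v)$ and the non-degeneracy of $\eta$ forces $u=\mathbf{e}$.

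First I would identify $\omega_{1,1}$ with $\omega_{1,\bar{1},0} : A\to K$ using \eqref{equivalence}, so that the partial sewing axiom \eqref{p-sewing} (equivalently, the TQFT incarnation \eqref{o-2} of the CohFT 2 axiom) applies. Sewing the two output boundaries of $\delta=\omega_{0,\bar{1},2}$ into the genus $0$ cobordism with three incoming boundaries closes up a handle and produces $\omega_{1,\bar{1},0}$; this is precisely the instance of \eqref{o-2} with $g=1$, $n=1$, giving
\[
\omega_{1,1}(v) \;=\; \sum_{a,b} \eta^{ab}\, \omega_{0,3}(v,e_a,e_b).
\]

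Next I would invoke \eqref{TQFT 03} to rewrite the right-hand side as
\[
\sum_{a,b} \eta^{ab}\, \epsilon(v\, e_a e_b) \;=\; \epsilon\!\left( v \sum_{a,b}\eta^{ab} e_a e_b\right) \;=\; \epsilon\bigl(v\,\mathbf{e}\bigr),
\]
where in the last step I use the definition \eqref{Euler basis} of the Euler element. Commutativity of $A$ (hence of $\epsilon$-cyclic products) lets me identify $\epsilon(v\mathbf{e}) = \epsilon(\mathbf{e}v) = \eta(\mathbf{e},v)$, which by non-degeneracy of the Frobenius form yields $\lambda^{-1}(\omega_{1,1})=\mathbf{e}$.

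The proof is essentially a one-line calculation once the translation from $\omega_{1,1}$ to $\omega_{1,\bar{1},0}$ is in place; there is no real obstacle beyond bookkeeping. The only subtle point worth flagging is the appeal to \eqref{equivalence} and \eqref{TQFT 03} to bridge the CohFT-style formulation $\omega_{g,n}:A^{\otimes n}\to K$ and the cobordism operations $\omega_{g,\bar{m},n}$, so that the sewing in $\Sigma_{1,\bar 1,0}$ can be read off as the contraction against $\delta(\mathbf{1})=\sum_{a,b}\eta^{ab}e_a\otimes e_b$ of \eqref{delata 1}.
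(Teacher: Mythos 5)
Your proposal is correct and follows essentially the same route as the paper: both reduce $\o_{1,1}(v)$ to $\sum_{a,b}\eta^{ab}\o_{0,3}(v,e_a,e_b)$ via the genus-reducing sewing relation \eqref{o-2}, rewrite $\o_{0,3}$ through \eqref{TQFT 03}, and recognize $\eta(v,\mathbf{e})$ from \eqref{Euler basis}, concluding by non-degeneracy of $\eta$. The extra bookkeeping you include (the characterization of $\lam^{-1}$ and the bridge between $\o_{1,1}$ and $\o_{1,\bar{1},0}$) is simply made explicit where the paper leaves it implicit.
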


\begin{proof}
This follows from
$$
\o_{1,1}(v) =
\sum_{a,b} \o_{0,3}(v,e_a,e_b)\eta^{ab}
=
 \sum_{a,b}\eta(v,e_ae_b)
\eta^{ab}=\eta(v,\mathbf{e})
$$
for every $v\in A$. 
\end{proof}

\begin{proof}[Proof of 
Proposition~\ref{prop:g-invariant}] Since the starting
 case  $g=1$ follows from the above Lemma,
we prove the formula by induction, which goes 
as follows:
\begin{align*}
\o_{g,1}(v) 
&= 
\sum_{a,b}
\o_{g-1,3}(v,e_a,e_b)\eta^{ab}
\\
&= 
\sum_{i,j,a,b}
\o_{0,4}(v,e_a,e_b,e_i)
\o_{g-1,1}(e_j)\eta^{ab}\eta^{ij}
\\
&=
\sum_{i,j,a,b}
\eta(ve_ae_b,e_i)
\o_{g-1,1}(e_j)\eta^{ab}\eta^{ij}
\\
&=
\sum_{i,j}
\eta(v\mathbf{e},e_i)
\o_{g-1,1}(e_j)\eta^{ij}
\\
&=
\o_{g-1,1}(v\mathbf{e})
\\
&=
\o_{1,1}(v\mathbf{e}^{g-1})
\\
&=
\eta(v\mathbf{e}^{g-1},\mathbf{e})
= \eta(v,\mathbf{e}^g).
\end{align*}
\end{proof}

A closed genus $g$ surface is obtained by
sewing $g$ genus $1$ pieces with one 
output boundaries to a genus $0$ surface with
$g$ input boundaries. Since the Euler element
is the output of the genus $1$ surface
with one boundary, we 
obtain the same result
$$
Z(\Sigma_g) = \o_{0,g}(\overset{g}{\overbrace{\mathbf{e},\dots,
\mathbf{e}}}).
$$
Finally we have the following:

\begin{thm}
The value of a 2D TQFT is given by
\be\label{TQFT gn}
\o_{g,n}(v_1,\dots,v_n)
=
\epsilon(v_1\cdots v_n \mathbf{e}^g).
\ee
\end{thm}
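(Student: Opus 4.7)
The cleanest route is a one-step induction on the genus $g$, using the recursion \eqref{o-2} together with the genus $0$ formula $\omega_{0,n}(v_1,\dots,v_n)=\epsilon(v_1\cdots v_n)$ of \eqref{TQFT 0n} and the basis expression \eqref{Euler basis} of the Euler element.

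The base case $g=0$ is exactly \eqref{TQFT 0n} and requires nothing further. For the inductive step, assume the formula has been established for $g-1$ and all $n$ in the stable range. Applying \eqref{o-2} with the $n$ inputs $v_1,\dots,v_n$ gives
\begin{equation*}
\omega_{g,n}(v_1,\dots,v_n)
=\sum_{a,b}\omega_{g-1,n+2}(v_1,\dots,v_n,e_a,e_b)\,\eta^{ab}.
\end{equation*}
By the induction hypothesis each term on the right equals $\epsilon(v_1\cdots v_n e_a e_b \mathbf{e}^{g-1})\eta^{ab}$. Since $A$ is commutative and $\epsilon$ is $K$-linear, we may pull the sum inside and factor out everything except $e_ae_b$, obtaining $\epsilon\bigl(v_1\cdots v_n \mathbf{e}^{g-1}\sum_{a,b}\eta^{ab}e_ae_b\bigr)$. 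The inner sum is precisely the Euler element $\mathbf{e}$ by \eqref{Euler basis}, so the expression collapses to $\epsilon(v_1\cdots v_n\mathbf{e}^g)$, completing the induction.

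One has to be mildly careful at the low end of the stable range, where \eqref{o-2} does not directly apply because the output genus drops below the stable range when $n=0$ and $g=1$. In that case the formula is already the content of Proposition~\ref{prop:g-invariant} (via the identity $\omega_{g,1}(v)=\eta(v,\mathbf{e}^g)=\epsilon(v\mathbf{e}^g)$, which in particular specializes correctly), so it can be used as an additional base for the induction on $n$ at each fixed $g$. Alternatively, one may start the induction at $(g,n)=(0,n)$ for all $n\geq 3$ and at $(g,1)$ for all $g\geq 1$, and then use \eqref{o-3} in the form
\begin{equation*}
\omega_{g,n}(v_1,\dots,v_n)=\sum_{a,b}\eta^{ab}\omega_{0,n+1}(v_1,\dots,v_n,e_a)\,\omega_{g,1}(e_b),
\end{equation*}
which corresponds to splitting off a one-pointed genus $g$ bubble from a genus $0$ $(n{+}1)$-pointed base. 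Substituting $\omega_{0,n+1}(v_1,\dots,v_n,e_a)=\epsilon(v_1\cdots v_n e_a)$ and $\omega_{g,1}(e_b)=\epsilon(e_b\mathbf{e}^g)$ and then applying the completeness relation \eqref{complete set} to the sum $\sum_{a,b}\eta^{ab}\eta(v_1\cdots v_n,e_a)\eta(\mathbf{e}^g,e_b)$ reproduces $\epsilon(v_1\cdots v_n\mathbf{e}^g)$ in one shot.

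The only real subtlety, and thus the main thing to watch, is the unstable range bookkeeping: the CohFT axioms as stated require $2g-2+n>0$, so one must pair the recursion \eqref{o-2} with the extensions \eqref{o-0102} (or equivalently the already-proved formula for $\omega_{g,1}$) to make sure the induction base is actually available. Once this is in place, commutativity of $A$ and the identification of $\sum\eta^{ab}e_ae_b$ with $\mathbf{e}$ do all the remaining work; no further combinatorics is needed.
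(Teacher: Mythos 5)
Your proposal is correct and follows essentially the same route as the paper: the paper's proof also trades each handle for a factor of $\mathbf{e}$ via \eqref{o-2} (quoting the computation of Proposition~\ref{prop:g-invariant} to reduce $\o_{g,n}$ to $\o_{1,n}(v_1\mathbf{e}^{g-1},v_2,\dots,v_n)$ and then to the genus-zero case), which is just a repackaging of your induction on $g$ using $\mathbf{e}=\sum_{a,b}\eta^{ab}e_ae_b$ and \eqref{TQFT 0n} as the base. Your remarks on the unstable-range bookkeeping and the alternative splitting via \eqref{o-3} are fine but not needed beyond what the paper already does.
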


\begin{proof}
The argument is the same as the proof
of Proposition~\ref{prop:g-invariant}:
\begin{align*}
\o_{g,n}(v_1,\dots,v_n)
&=
\o_{1,n}(v_1\mathbf{e}^{g-1},v_2,\dots,
v_n)
\\
&=
\sum_{a,b}
\o_{0,n+2}(v_1\mathbf{e}^{g-1},v_2,\dots,
v_n, e_a,e_b)\eta^{ab}
\\
&=
\epsilon(v_1\cdots v_n \mathbf{e}^g).
\end{align*}
\end{proof}


\section{Category of cell graphs}
\label{sect:cell}

In the original formulation of 2D TQFT, 
the operations of multiplication and comultiplication
are associated with an oriented surface of
genus $0$ with three boundary circles. 
In this section, we introduce a category
of ribbon graphs, which carries the information of
all finite-dimensional Frobenius algebras. 
To avoid unnecessary confusion, we use
the terminology of \emph{cell graphs}
 in this article, instead of more common
ribbon graphs. Ribbon graphs naturally
appear for encoding complex structures of
a topological surface 
(see for example, \cite{K1992, MP1998}).
Our purpose of using ribbon graphs are
for degeneration of stable curves, and we label
vertices, instead of \emph{faces}, of a ribbon 
graph.

\begin{Def}[Cell graphs]
A connected \textbf{cell graph} 
of topological type $(g,n)$ is the
$1$-skeleton of a cell-decomposition of a
connected
closed oriented surface of genus $g$ with 
$n$ labeled $0$-cells. We call a $0$-cell a 
\emph{vertex}, a $1$-cell an \emph{edge}, 
and a $2$-cell a \emph{face}, of the cell graph.
We denote by $\Gam_{g,n}$ the set of 
connected cell graphs of type $(g,n)$.
Each edge consists of two \textbf{half-edges}
connected at the midpoint of the edge.
\end{Def}

\begin{rem}
\begin{itemize}
\item
The \emph{dual} of a cell graph is
a ribbon graph, or Grothendieck's 
dessin d'enfant. We note that we label vertices
of a cell graph, which corresponds to 
face labeling of a ribbon graph.
Ribbon graphs are also called by different names,
such as
embedded graphs and maps.

\item We identify two cell graphs if there is a 
homeomorphism of the surfaces that brings 
one cell-decomposition to the other, 
keeping the labeling of $0$-cells. The only 
possible automorphisms of a cell graph
come from cyclic rotations of half-edges
at each vertex. 
\end{itemize}
\end{rem}

\begin{Def}[Directed cell graph]
A \textbf{directed cell graph} is a cell graph
for which an arrow is assigned to each edge.
An arrow is the same as an ordering of the 
two half-edges forming an edge.
The set of directed cell graphs of type
$(g,n)$ is denoted by $\vec{\Gam}_{g,n}$.
\end{Def}

\begin{rem}
A directed cell graph is a \emph{quiver}. Since
our graph is drawn on an oriented surface, 
a directed cell graph carries more information than
its underlying quiver structure. The tail vertex
of an arrowed edge is called the \emph{source},
and the head of the arrow the \emph{target}, in the
quiver language.
\end{rem}

 To label $n$ vertices, we normally use the $n$-set
 $$
 [n]:= \{1,2,\dots,n\}.
 $$
 However,  it is often easier to use any totally ordered set of 
 $n$ elements for labeling. 
The main reason we label the vertices of a cell graph
is we wish to assign an element of a $K$-vector space 
$A$ 
to each vertex. In this article, we consider
the case that  a cell graph
$\gam\in \Gam_{g,n}$ defines a linear map
\be\label{gam as function}
\Gam_{g,n}\owns\gam : A^{\tensor n}\lrar K.
\ee
The set of 
values of these functions $\gam$ can be more general. 
We discuss some of the general cases in \cite{OM9}.

An effective tool in graph enumeration is
edge-contraction operations. Often edge contraction
leads to an inductive formula for counting problems
of graphs. 
The same edge-contraction
 operations acquire algebraic meaning in our consideration.

\begin{Def}[Edge-contraction operations]
\label{def:ECO}
There are two types of 
 \textbf{edge-contraction operations} 
 applied to cell graphs. 
 \begin{itemize}

\item \textbf{ECO 1}: Suppose there is a
directed edge $\vec{E}=\overset{\lrar}{p_ip_i}$
in a cell graph $\gam\in \Gam_{g,n}$,
connecting the tail vertex $p_i$ and the head
vertex $p_j$.
We  \emph{contract} $\vec{E}$ in $\gam$,
and put the two vertices $p_i$ and $p_j$ together.
We use $i$ for the label of this new vertex, and 
call it again $p_i$. 
Then we have a new cell graph
 $\gam'\in \Gam_{g,n-1}$ with one less vertices.
 In this process, the topology of the surface on
 which $\gam$ is drawn does not change. Thus
 genus $g$ of the graph stays the same.

\begin{figure}[htb]
\includegraphics[height=1in]{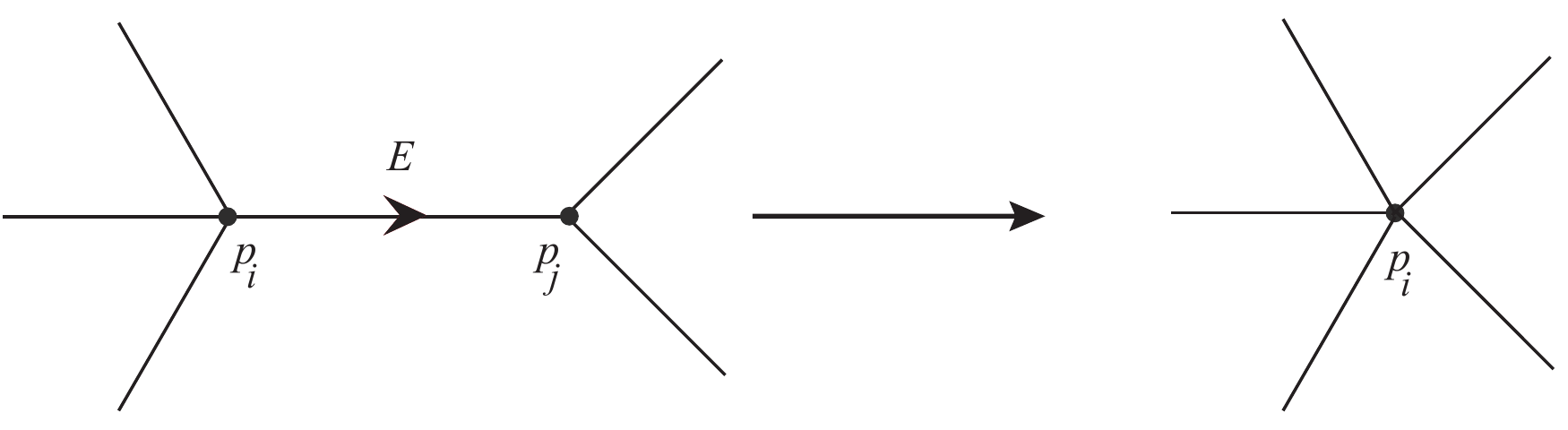}
\caption{Edge-contraction operation ECO 1.
The edge bounded by two vertices $p_i$ and $p_j$
is contracted to a single vertex $p_i$.
}
\label{fig:ECO 1}
\end{figure}

\item We use the notation $\vec{E}$ for the 
 edge-contraction operation 
 \be\label{ECO1}
 \vec{E}:\Gam_{g,n}\owns \gam\longmapsto \gam'\in 
 \Gam_{g,n-1}.
 \ee

\item \textbf{ECO 2}: Suppose there is a directed loop
$\vec{L}$ in $\gam\in\Gam_{g,n}$ at the $i$-th vertex $p_i$.
Since a loop in the $1$-skeleton of
a cell decomposition is 
a topological cycle on the surface, its contraction 
inevitably changes the topology of the surface. 
First we look at the half-edges incident to 
vertex $p_i$. Locally around $p_i$ on the 
surface, the directed loop $\vec{L}$ 
separates the neighborhood of
$p_i$
into two pieces. Accordingly, we put 
the incident half-edges into  
two groups. We then break the vertex $p_i$
into two vertices, $p_{i_1}$ and $p_{i_2}$, so that 
one group of half-edges are incident to $p_{i_1}$,
and the other group to $p_{i_2}$. 
The order of two vertices 
is determined by placing the loop $\vec{L}$
\emph{upward} near at vertex $p_i$. 
Then we name the new vertex on its left by $p_{i_1}$,
and on its right by $p_{i_2}$.

Let $\gam'$ denote the possibly 
disconnected graph obtained by 
contracting $\vec{L}$ and separating the vertex
to two distinct vertices labeled by $i_1$ and $i_2$.

\begin{figure}[htb]
\includegraphics[height=1.1in]{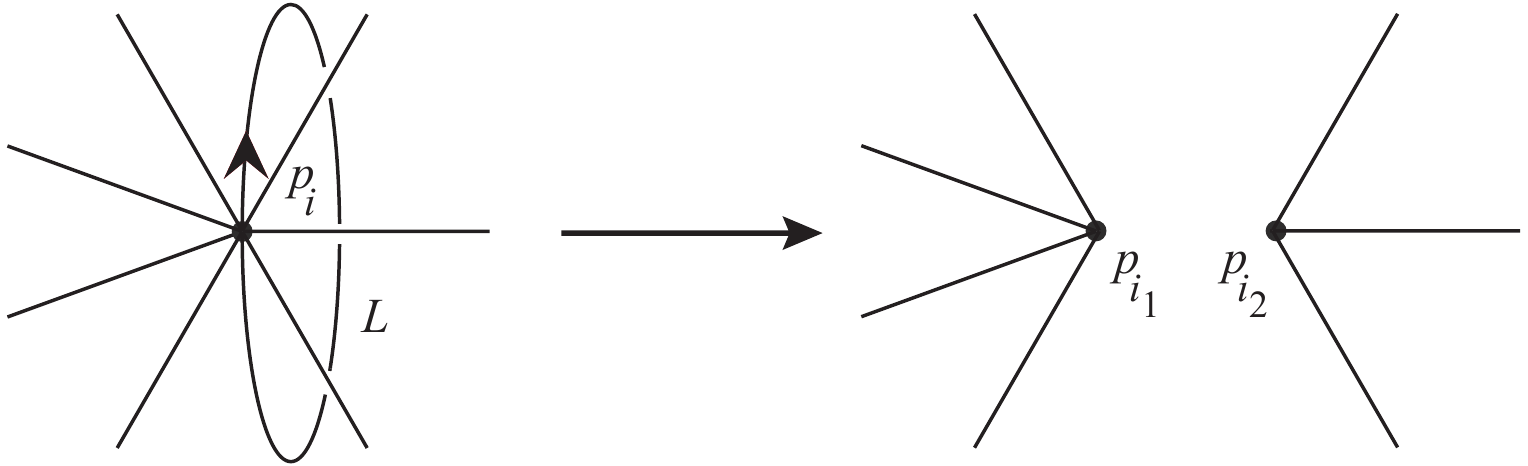}
\caption{Edge-contraction operation 
ECO 2. The contracted edge is a loop $\vec{L}$ of 
a cell graph. Place the loop 
so that it is upward near at $p_i$
to which 
$\vec{L}$ is attached. The vertex $p_i$  is 
then broken into two vertices,
$p_{i_1}$ on the left, and $p_{i_2}$ on 
the right. Half-edges 
incident to $p_i$ are separated into two groups,
belonging to two sides of the loop near $p_i$.
}
\label{fig:ECO2}
\end{figure}

\item
If $\gam'$ is connected, then it is in $\Gam_{g-1,n+1}$.
The loop $\vec{L}$  is a \textit{loop of handle}.
We use the same notation $\vec{L}$ to indicate
the edge-contraction operation
\be\label{ECO2-1}
\vec{L}:\Gam_{g,n}\owns \gam\longmapsto
\gam'\in \Gam_{g-1,n+1}.
\ee

\item
If $\gam'$ is disconnected, then write
$\gam'=(\gam_1,\gam_2)\in \Gam_{g_1,|I|+1}
\times \Gam_{g_2,|J|+1}$, where 
\be\label{disconnected}
\begin{cases}
g=g_1+g_2\\
I\sqcup J = \{1,\dots,\widehat{i},\dots,n\}
\end{cases}.
\ee
The edge-contraction operation is again denoted 
by
\be\label{ECO2-2}
\vec{L}:\Gam_{g,n}\owns \gam\longmapsto
(\gam_1,\gam_2)\in \Gam_{g_1,|I|+1}\times
\Gam_{g_2,|J|+1}.
\ee
In this case we  call $\vec{L}$ a \textit{separating loop}.
Here, vertices labeled by $I$ belong to the connected 
component of genus 
$g_1$, and those labeled by $J$ are on the other 
component of genus $g_2$. 
Let $(I_-,i,I_+)$ (reps. $(J_-,i,J_+)$) be the reordering of 
$I\sqcup \{i\}$ (resp. $J\sqcup \{i\}$)
in the increasing order. 
Although we give labeling $i_1,i_2$ to the
two vertices created by breaking $p_i$,
since they belong to distinct graphs, we can simply
use $i$ for the label of $p_{i_1}\in \gam_1$ and
the same $i$ for $p_{i_2}\in \gam_2$. 
The arrow of $\vec{L}$ translates
into the information of ordering among the 
two vertices $p_{i_1}$ and $p_{i_2}$. 
\end{itemize}
\end{Def}

\begin{rem}
Let us define $m(\gam)=2g-2+n$
for a graph $\gam\in \Gamma_{g,n}$. Then every
edge-contraction operation  reduces 
$m(\gam)$ exactly by $1$. 
Indeed, for ECO 1, we have 
$$
m(\gam') = 2g -2
+(n-1) = m(\gam)-1.
$$
The ECO 2 applied to a loop of handle produces
$$
m(\gam') = 2(g-1)-2+(n+1) = m(\gam)-1.
$$ 
For a separating loop, we have
$$
\begin{matrix}
&2g_1-2+|I|+1
\\
{+)}&{2g_2-2+|J|+1}
\\
&\overline{2g_1+2g_2-4+|I|+|J|+2}
&=\; \;2g-2+n-1.
\end{matrix}
$$
\end{rem}

The motivation for our
 introduction of directed cell graphs is that we need
them when we 
deal with non-commutative Frobenius algebras. 
The operation of taking 
disjoint union  is symmetric. Therefore,  
2D TQFT inevitably
 leads to a commutative Frobenius algebra. 
The advantage of our formalism using directed 
cell graphs is that we can deal with 
non-commutative Frobenius algebras and 
non-symmetric tensor products. 

For the purpose of presenting the idea of 
the category of cell graphs as simple as 
possible, we restrict ourselves
to undirected cell graphs in this article. 
Therefore, we will only recover commutative
Frobenius algebras and usual 2D TQFT. 
A more general theory will be given in \cite{OM9}.

We now introduce the category of cell graphs. 
The most unusual point we present here is that
a morphism between cell graphs is \emph{not}
a cell map. Recall that 
 a \textbf{cell map} 
$
f:\gam\lrar \gam'
$
 from a   cell graph
${\gam}$ to another cell graph ${\gam}'$ 
is a topological map between $1$-dimensional
cell complexes. Thus $f$ sends a vertex of $\gam$ 
to a vertex of $\gam'$,  and an edge of $\gam$ 
 to either an edge or a vertex of $\gam'$, keeping 
 the incidence relations. In particular, a cell map is 
 continuous with respect to the topological structure
 on cell graphs indued from the surface on which 
 they are drawn.

\begin{Def}[Category of cell graphs]
The category $\CG$ of cell graphs is defined as follows.
\begin{itemize}
\item The set of objects of $\CG$ is the set of all
 cell graphs:
\be\label{objects}
Ob(\CG) = \coprod_{g\ge 0,n>0}{\Gam}_{g,n}.
\ee

 \item 
A \textbf{morphism}
$$
f\in \Hom(\gam,\gam')
$$
 is  a composition of a finite 
 sequence of edge-contraction operations and
cell graph automorphisms.
 In particular,
$\Hom({\gam},{\gam}) 
=\Aut(\gam)$. 
If there is no way to bring ${\gam}$ to $\gam'$
by  consecutive applications of
edge-contraction operations and automorphisms, then
we define 
$\Hom({\gam},{\gam}') = \emptyset$,
even though there may be cell maps
between them. 
\end{itemize}
\end{Def}

\begin{rem}
The triple $(\CG,\sqcup, \emptyset)$ forms
a symmetric monoidal category.
\end{rem}

\begin{rem}
Automorphisms of a cell graph and ECOs of the first 
kind
are cell maps, but ECO 2 operations 
are not. When an ECO 2
is involved,  
a morphism between cell graphs does not
have to be a cell map. Even it
 may not be  a continuous map. 
\end{rem}

\begin{ex}
A few simple examples of morphisms 
are given below.   Note that vertices are all 
labeled, and automorphisms are required to 
keep labeling.
\begin{align}
\label{1 to 1}
&\Hom\left(
\bullet,\bullet\right)
=\{id\}.
\\
\label{1 to 2}
&\Hom\left(
\bullet,\two\right)
=\emptyset.
\\
\label{e1,e2}
&
\Hom(\bullet\!\! \frac{E_1}{\hskip0.2in}\!\!\!\bullet
\!\!\! \frac{E_2}{\hskip0.2in}\!\!\bullet, 
\two) = \{E_1,E_2\}.
\\
\label{e1e2}
&
\Hom(\bullet\!\! \frac{E_1}{\hskip0.2in}\!\!\!\bullet
\!\!\! \frac{E_2}{\hskip0.2in}\!\!\bullet, 
\bullet) = \{E_1E_2=E_2E_1\}.
\\
\label{e1 loop}
&
\Hom\left(
\underset{E_2}{\overset{E_1}{\bullet\!\!\!\bigcirc\!\!\!\bullet}},\bullet\!\!\bigcirc\right) 
=\{E_1,E_2=\sigma(E_1)\}.
\\
\label{e1e2 loop}
&
\Hom\left(
\underset{E_2}{\overset{E_1}{\bullet\!\!\!\bigcirc\!\!\!\bullet}},\bullet\;\;\bullet\right)
=\{E_1E_2=E_2E_1\}.
\end{align}
In \eqref{e1e2}, we note that $E_1E_2:=E_1\circ E_2$
is equal to 
$E_2E_1:=E_2\circ E_1$, because they both produce the 
same result
$\three\rar  \two \rar \bullet$.
The cell graph of the left of \eqref{e1 loop}
and \eqref{e1e2 loop} 
has an automorphism $\sigma$
that interchanges $E_1$
and $E_2$. Thus as an edge-contraction operation,
$E_2=E_1\circ \sigma = \sigma(E_1)$.
Note that there is a $2:1$ covering cell map for the 
case of
 \eqref{e1 loop} that sends both edges $E_1$ and $E_2$
 on $\underset{E_2}{\overset{E_1}{\bullet\!\!\!\bigcirc\!\!\!\bullet}}$
 to the single loop of $\bullet\!\!\bigcirc$, and
 the two vertices on the first graph
  to the single vertex on  the second. 
  Since it is not an edge-contraction,
  this cell map is not a morphism. The morphism 
  of \eqref{e1e2 loop} is not a cell map, since it is not
   continuous. 
\end{ex}

Let  $\Vect$ be
 the category of finite-dimensional
 $K$-vector spaces. 
 The triple 
 $$
 \cC= (\Vect,\tensor, K)
 $$
   forms a 
   monoidal category. 
 Again for simplicity, we are concerned only with
 symmetric tensor products in this article, 
 so we consider $\cC$ a \emph{symmetric}
 monoidal category. 
 A $K$-object in $\Vect$ is a pair
 $(V,\eps:V\lrar K)$ consisting of a vector 
 space $V$ and a linear map $\eps:V\lrar K$.
 We denote by $\Vect/K$ the category of
 $K$-objects in $\Vect$. It has the unique
 final object $(K,id:K\lrar K)$.
 Therefore, 
 $$
 \cC/K = \big(\Vect/K,\tensor,(K,id:K\lrar K)\big)
 $$
 is again a monoidal category.
 We denote by 
\be\label{endo}
\cF un(\cC/K,\cC/K)
\ee
the \textbf{endofunctor category}
of the monoidal category $\cC/K$, which
 consists of monoidal functors 
$
\a:\cC/K \lrar \cC/K
$ 
as its objects,
and their natural transformations 
$\tau$ as morphisms. 
Schematically, we have 
$$
\xymatrix{
V\ar[ddd]_h\ar[ddr]^f && \a(V) 
\ar[ddd]_{\a(h)}\ar[ddr]^{\a(f)} \ar[rr]^\tau 
&&\b(V)\ar[ddd]^{\b(h)}
\ar[ddr]^{\b(f)}
\\
\\
&K&&K\ar[rr]^{\tau\hskip0.5in}&&K.
\\
W \ar[ur]_g &&\a(W) \ar[ur]^{\a(g)} 
\ar[rr]^\tau&&\b(W)\ar[ur]_{\b(g)}
}
$$
Here, the triangle on the left shows two objects
$(V,f:V\lrar K)$ and $(W,g:W\lrar K)$ of
$\cC/K$, and a morphism $h$ between them.
The prism shape on the right represents
two monoidal endofunctors $\a$ and $\b$
that assigns
\begin{align*}
&V\longmapsto \a(V), \qquad
V\longmapsto \b(V)
\\
&W\longmapsto \a(W), \qquad
W\longmapsto \b(W),
\end{align*}
and a natural transformation $\tau: \a\lrar \b$
among them.
The final object of
$\cF un(\cC/K,\cC/K)$ is the functor 
\be\label{final phi}
\phi: (V,f:V\lrar K) \lrar (K,id_K:K\lrar K)
\ee
which assigns the final object of the codomain
$\cC/K$ to 
everything in the domain $\cC/K$. 
With respect to the tensor product and the above
functor \eqref{final phi} as its identity object,
the endofunctor category $\cF un(\cC/K,\cC/K)$
is  again a  monoidal category.

\begin{Def}[ECO functor, \cite{OM9}]
\label{def:ECO functor}
The ECO functor is 
a monoidal functor
\be\label{TQFT functor}
\o:\CG\lrar \cF un(\cC/K,\cC/K)
\ee
satisfying the following conditions.
\begin{itemize}
\item The graph $\bullet\in \Gam_{0,1}$ 
consisting of only one vertex and no edge
corresponds to 
the identity functor 
\be\label{o bullet}
\o(\bullet)=id:\cC/K\lrar \cC/K.
\ee
\item Each graph $\gam\in \Gam_{g,n}$ corresponds to 
a functor
\be\label{gam functor}
\o(\gam): (V,\eps:V\lrar K)\longmapsto 
(V^{\tensor n}, \o_V(\gam):V^{\tensor n}\lrar K).
\ee

\item Edge-contraction operations correspond
to natural transformations.  
\end{itemize}
\end{Def}

Let us recall the notion of Frobenius object. 

\begin{Def}[Frobenius object]
\label{def:Frobenius opject}
Let $(\cC,\tensor,K)$ be a symmetric monoidal category. 
A \textbf{Frobenius object} is an object $V\in Ob(\cC)$
together with 
 morphisms 
 $$
m:V\tensor V\lrar V,\quad
\mathbf{1}:K\lrar V, \quad
\delta:V\lrar V\tensor V,\quad
\epsilon:V\lrar K,
$$
satisfying the following conditions:
\begin{itemize}
\item $(V,m,\mathbf{1})$ is a monoid object in 
$\cC$.
\item $(V,\delta,\epsilon)$ is a comonoid 
object in $\cC$.
\end{itemize}
We also require the compatibility condition
\eqref{m delta}
among morphisms  $m$
and $\delta$:
$$
\xymatrix{
&V\tensor V\tensor  V\ar[dr]^{m\tensor id}&\\
V\tensor V \ar[ur]^{id \tensor \delta}\ar[r]^{\;\;\; m} \ar[dr]_{\delta\tensor id}& 
V \ar[r]^{\delta \;\;\;\;} & V\tensor V. \\
&V\tensor V\tensor V \ar[ur]_{id \tensor m}
		}
$$
\end{Def}

Since we are considering the monoidal category
of $K$-objects in $\Vect$, there is a priori no notion of
$\mathbf{1}$ in $V$. The existence of 
the morphism $\mathbf{1}:K\lrar V$ requires
a non-degeneracy 
condition. The following theorem is proved in 
\cite{OM9}.

\begin{thm}[Generation of Frobenius objects \cite{OM9}]
An object $(V,\eps:V\lrar K)$ of $\cC/K$ is a
Frobenius object if $\o_V(\two) : V\tensor V\lrar K$ 
defines a non-degenerate symmetric bilinear form 
on $V$. 
\end{thm}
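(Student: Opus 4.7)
The plan is to exhibit morphisms $m, \mathbf{1}, \delta$ that together with the given $\eps$ equip $V$ with a Frobenius object structure, drawing the multiplicative data from the ECO functor $\o$ and supplying the remaining pieces via the non-degeneracy of $\eta := \o_V(\two)$. First, the edge-contraction $\vec{E}: \two \to \bullet$ is a morphism in $\CG$, so under $\o$ it becomes a natural transformation whose value at $(V,\eps)$ is a morphism in $\cC/K$ from $(V \tensor V, \eta)$ to $(V, \eps)$. This is precisely a linear map $m: V \tensor V \to V$ satisfying $\eps \circ m = \eta$. Associativity of $m$ is then immediate from the graph identity $E_1 E_2 = E_2 E_1$ of \eqref{e1e2}: applying $\o$ to this equality of morphisms in $\Hom_{\CG}(\three, \bullet)$ translates directly into $m \circ (m \tensor \mathrm{id}) = m \circ (\mathrm{id} \tensor m)$.

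For the unit, non-degeneracy produces the isomorphism $\lam: V \xrightarrow{\sim} V^*$ with $\la \lam(u), v \ra = \eta(u,v)$, and I set $\mathbf{1} := \lam^{-1}(\eps)$. To check $m(\mathbf{1} \tensor v) = v$, I pair with an arbitrary $w \in V$ and compute
\begin{equation*}
\eta(m(\mathbf{1} \tensor v), w) = \eps(m(\mathbf{1} \tensor m(v \tensor w))) = \eta(\mathbf{1}, m(v \tensor w)) = \eps(m(v \tensor w)) = \eta(v, w),
\end{equation*}
using $\eps \circ m = \eta$, associativity, and $\lam(\mathbf{1}) = \eps$; non-degeneracy then forces the identity. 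Symmetry of $\eta$ together with associativity also gives $m(v \tensor \mathbf{1}) = v$ and commutativity of $m$, which is precisely where the symmetry hypothesis on $\eta$ is used.

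The comultiplication $\delta: V \to V \tensor V$ is defined abstractly by the diagram \eqref{delta}, namely as the composition $(\lam^{-1} \tensor \lam^{-1}) \circ m^* \circ \lam$. Coassociativity follows automatically from associativity of $m$ by duality, and $\eps$ serves as the counit since $\lam(\mathbf{1}) = \eps$. The Frobenius compatibility \eqref{m delta} is then exactly the content of Proposition \ref{prop:m delta}, whose calculation uses only the data at hand and therefore applies verbatim. This completes the verification of Definition \ref{def:Frobenius opject}.

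The main obstacle I anticipate is reconciling this abstractly defined $\delta$ with its intrinsic origin as a natural transformation coming from $\o$: one would like to show that the ECO 2 operation (separating-loop contraction) applied to the cell graph $\bullet\!\!\bigcirc \in \Gam_{0,1}$, regarded as a morphism $\bullet\!\!\bigcirc \to \bullet \sqcup \bullet$ in $\CG$, produces under $\o$ exactly the $\delta$ constructed above. This requires careful bookkeeping of the monoidal structure on $\cF un(\cC/K, \cC/K)$ and the correspondence between disjoint union of cell graphs and tensor product of endofunctors, together with the role of cyclic symmetries at vertices in forcing commutativity. The abstract duality-based route I propose sidesteps this delicate verification, but leaves the geometric picture to be justified separately.
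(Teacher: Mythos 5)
A preliminary remark: this article does not actually contain a proof of the theorem --- it is quoted from \cite{OM9} (``The following theorem is proved in \cite{OM9}'') --- so your proposal can only be measured against the surrounding machinery. Your route is the natural, and almost certainly the intended, one: extract $m$ as the component at $(V,\eps)$ of the natural transformation attached to the contraction $\two\lrar\bullet$, so that $\eps\circ m=\o_V(\two)=\eta$; obtain $\mathbf{1}=\lam^{-1}(\eps)$ from non-degeneracy; define $\delta$ by the duality diagram \eqref{delta}; and import Proposition~\ref{prop:m delta} for the compatibility \eqref{m delta}. Your unit computation is correct, and since Definition~\ref{def:Frobenius opject} only asks for a monoid, a comonoid, and \eqref{m delta}, the duality construction of $\delta$ legitimately sidesteps ECO~2, as you yourself note.

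There are, however, two genuine problems. First, associativity is not ``immediate'' from \eqref{e1e2}. Functoriality applied to $E_1E_2=E_2E_1$ only yields $m\circ\mu_1=m\circ\mu_2$, where $\mu_1,\mu_2\colon V^{\tensor 3}\lrar V^{\tensor 2}$ are the components of whatever natural transformations $\o$ assigns to the two single-edge contractions $\three\lrar\two$. Definition~\ref{def:ECO functor}, as stated in this article, does not force $\mu_1=m\tensor \mathrm{id}$ and $\mu_2=\mathrm{id}\tensor m$; that locality of an edge contraction performed inside a larger graph (used implicitly in the Section 5 proofs, and part of the full definition in \cite{OM9}) must be invoked explicitly, otherwise associativity of $m$ --- and with it your unit argument and the applicability of Proposition~\ref{prop:m delta} --- does not follow. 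You flag exactly this kind of bookkeeping for $\delta$, but you use it silently for $m$. Second, the assertion that symmetry of $\eta$ together with associativity gives commutativity of $m$ is false: the trace form on $\Mat_n(K)$ is symmetric, non-degenerate, and invariant, yet the algebra is non-commutative. Fortunately commutativity is not required by Definition~\ref{def:Frobenius opject}, and the right-unit identity follows from associativity and the left unit alone, with no use of symmetry; so this slip does not affect the conclusion, but the claim --- and the remark that this is ``precisely where the symmetry hypothesis is used'' --- should be removed or corrected.
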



\section{2D TQFT from cell graphs}
\label{sect:cell TQFT}

The result of Section~\ref{sect:CohFT}
 tells us that 
a 2D TQFT can be defined as a system $(A,\o_{g,n})$
of
linear maps 
\be\label{o-gn}
\o_{g,n}:A^{\tensor n}\lrar K
\ee
defined for all values of $g\ge 0$ and $n\ge 1$,
satisfying a set of conditions. 
The required conditions are the following: First,
$(A,\o_{g,n})$ is a CohFT for $2g-2+n>0$. 
In addition, we require that
\be\label{o-gn conditions}
\begin{aligned}
&\o_{0,1} = \eps: A\lrar K,
\\
&
\o_{0,2}=\eta:A\tensor A\lrar K.
\end{aligned}
\ee
In this section we give a different formulation of 
a 2D TQFT, 
based  on cell graphs and a different set of 
axioms. Our ultimate goal is to 
relate 2D TQFT, CohFT, mirror symmetry,
 topological recursion,
and quantum curves. Later in these lectures, we 
introduce quantum curves. Relations
between all these subjects
 will be discussed elsewhere \cite{OM9}.

\begin{thm}[Graph independence \cite{OM3}]
\label{thm:independence}
Let $(A,\eps:A\lrar K)$ be 
a Frobenius object under the ECO functor $\o$ of
Definition~\ref{def:ECO functor}. 
Then
every connected cell graph $\gam\in \Gam_{g,n}$
gives rise to the same map
\be\label{independence}
\o_A(\gam): A^{\tensor n}\owns 
v_1\tensor \cdots \tensor v_n \longmapsto
\epsilon(v_1\cdots v_n \mathbf{e}^g)\in  K,
\ee
where $\mathbf{e}$ is the Euler element 
of \eqref{Euler}.
\end{thm}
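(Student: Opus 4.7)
The plan is to induct on the number of edges $|E(\gam)|$, using the ECO functor to translate each edge-contraction on the graph side into an algebraic operation on the Frobenius object side: ECO 1 becomes multiplication $m\colon A\tensor A\to A$, and ECO 2 becomes the comultiplication $\delta\colon A\to A\tensor A$ inherited from the Frobenius structure. For the base case $|E(\gam)|=0$, connectedness forces $\gam=\bullet\in\Gam_{0,1}$, and condition \eqref{o bullet} together with the universal property of the final object in $\cF un(\cC/K,\cC/K)$ gives $\omega_A(\bullet)=\eps$, which matches $\eps(v\cdot \mathbf{e}^0)=\eps(v)$.

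For the inductive step, I would pick any edge $E$ of $\gam$, orient it arbitrarily, and apply the corresponding ECO. If $E$ joins distinct vertices $p_i, p_j$ (ECO 1), the merged vertex of $\gam'\in\Gam_{g,n-1}$ carries input $v_iv_j$, so the inductive hypothesis together with commutativity of $A$ gives
$$\omega_A(\gam)(v_1,\dots,v_n)=\omega_A(\gam')(\dots,v_iv_j,\dots)=\eps(v_1\cdots v_n\mathbf{e}^g).$$
If $E$ is a loop of handle at $p_i$ (ECO 2), the ECO functor inserts $\delta(v_i)=\sum_{a,b}\eta^{ab}(v_i e_a)\tensor e_b$ at the two new vertices of $\gam'\in\Gam_{g-1,n+1}$; induction then yields $\sum_{a,b}\eta^{ab}\,\eps(v_1\cdots v_n e_a e_b\mathbf{e}^{g-1})$, which collapses to $\eps(v_1\cdots v_n\mathbf{e}^g)$ via $\mathbf{e}=\sum_{a,b}\eta^{ab}e_a e_b$ (see \eqref{Euler basis}). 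Finally, if $E$ is a separating loop yielding $(\gam_1,\gam_2)$ of types $(g_1,|I|+1)$ and $(g_2,|J|+1)$ with $g_1+g_2=g$, then applying induction to each factor and using the contraction identity
$$\sum_{a,b}\eta^{ab}\,\eta(u,e_a)\,\eta(w,e_b)=\eta(u,w),$$
which follows from \eqref{complete set} and the symmetry of $\eta$, collapses the double sum to $\eta(v_I v_i\mathbf{e}^{g_1},v_J\mathbf{e}^{g_2})=\eps(v_1\cdots v_n\mathbf{e}^g)$.

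The hardest part will not be the algebraic computation, which is mechanical once the framework is set up, but the structural prerequisite: one must verify that the ECO functor of Definition \ref{def:ECO functor} actually implements multiplication on ECO 1 and the comultiplication $\delta$ on ECO 2 in the explicit form used above, and that it is well-defined on morphisms of $\CG$ given the nontrivial identifications witnessed by relations such as \eqref{e1e2} and \eqref{e1e2 loop}, where distinct sequences of contractions yield the same morphism. Once that compatibility is in place, the three cases feed the induction uniformly; and because the target formula $\eps(v_1\cdots v_n\mathbf{e}^g)$ depends only on $(g,n)$ and is symmetric in its inputs (using commutativity of $A$), independence of the choice of edge is automatic, and no separate consistency check between different reduction sequences is needed.
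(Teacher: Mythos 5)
Your proposal is correct and follows essentially the same route as the paper: induct, contract one chosen edge, and treat the three ECO cases with exactly the algebraic identities the paper uses (inserting $\delta(v_i)=\sum_{a,b}\eta^{ab}(v_ie_a)\otimes e_b$, then collapsing via \eqref{Euler basis} for a handle loop and via \eqref{complete set} for a separating loop). The only difference is bookkeeping: you induct on the number of edges with the edgeless vertex as base case, while the paper inducts on $2g-2+n$ after first settling the $(0,1)$ and $(0,2)$ values through its edge-removal lemma (Lemma~\ref{lem:reduced}).
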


\begin{cor}[ECO implies TQFT]
\label{cor:ECA=TQFT}
Define $\o_{g,n}(v_1,\dots,v_n) = 
\o_A(\gam)(v_1,\dots,v_n)$ for every
$\gam\in \Gam_{g,n}$. Then 
$\{\o_{g,n}\}$ is a 2D TQFT. 
\end{cor}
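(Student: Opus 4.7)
The plan is to exploit the closed-form expression handed to us by Theorem~\ref{thm:independence}, namely $\o_A(\gam)(v_1,\dots,v_n)=\epsilon(v_1\cdots v_n\mathbf{e}^g)$, which in particular shows that the definition $\o_{g,n}:=\o_A(\gam)$ is independent of the chosen representative $\gam\in\Gam_{g,n}$ and hence unambiguous. Having this explicit formula at hand, the strategy reduces to checking the axioms of a 2D TQFT directly against it. Since the earlier propositions in Section~\ref{sect:CohFT} show that specifying a 2D TQFT is the same as specifying a family $\{\o_{g,n}\}$ which is a CohFT with values in $H^0(\Mbar_{g,n},K)$ together with the unstable-range normalizations $\o_{0,1}=\epsilon$ and $\o_{0,2}=\eta$, it suffices to verify precisely these items.

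First I would settle the base and normalization cases. The graph with a single vertex $\bullet\in\Gam_{0,1}$ gives $\o_{0,1}(v)=\epsilon(v\,\mathbf{e}^0)=\epsilon(v)$, the two-vertex graph in $\Gam_{0,2}$ gives $\o_{0,2}(v_1,v_2)=\epsilon(v_1v_2)=\eta(v_1,v_2)$, and the pants value $\o_{0,3}(v_1,v_2,v_3)=\epsilon(v_1v_2v_3)=\eta(v_1v_2,v_3)$ is exactly axiom CohFT~0 together with Frobenius associativity. The $S_n$-symmetry of $\o_{g,n}$ is automatic from the commutativity of $A$.

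Next I would dispatch CohFT~1 by direct substitution: inserting the unit in the last slot,
\[
\o_{g,n+1}(v_1,\dots,v_n,\mathbf{1})=\epsilon(v_1\cdots v_n\cdot\mathbf{1}\cdot\mathbf{e}^g)=\o_{g,n}(v_1,\dots,v_n).
\]
For the two gluing axioms CohFT~2 and CohFT~3, the key algebraic identity is
\[
\sum_{a,b}\eta^{ab}\epsilon(u\,e_a)\,\epsilon(w\,e_b)=\epsilon(uw),\qquad u,w\in A,
\]
which follows from the completeness relation \eqref{complete set} applied as $u=\sum_{a,b}\eta(u,e_a)\eta^{ab}e_b$, and then evaluating $\eta(w,u)=\epsilon(uw)$. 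Combined with the basis expression $\mathbf{e}=\sum_{a,b}\eta^{ab}e_ae_b$, this gives
\[
\sum_{a,b}\eta^{ab}\o_{g-1,n+2}(v_1,\dots,v_n,e_a,e_b)=\epsilon\!\left(v_1\cdots v_n\,\mathbf{e}^{g-1}\mathbf{e}\right)=\o_{g,n}(v_1,\dots,v_n),
\]
verifying CohFT~2. The same identity applied with $u=v_I\mathbf{e}^{g_1}$ and $w=v_J\mathbf{e}^{g_2}$ yields
\[
\sum_{a,b}\eta^{ab}\o_{g_1,|I|+1}(v_I,e_a)\,\o_{g_2,|J|+1}(v_J,e_b)=\epsilon\!\left(v_Iv_J\mathbf{e}^{g_1+g_2}\right)=\o_{g_1+g_2,|I|+|J|}(v_I,v_J),
\]
which is CohFT~3.

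The main conceptual obstacle, rather than a computational one, is ensuring that the list above really is enough: that the four items checked imply the full partial-sewing axiom TQFT~4. I would not re-prove this here, but instead cite the equivalence between 2D TQFTs and (degree-zero parts of) CohFTs over commutative Frobenius algebras established in Section~\ref{sect:TQFT}--\ref{sect:CohFT}, which turns the above verification into the desired conclusion.
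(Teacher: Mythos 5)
Your proof is correct, but it takes a more hands-on route than the paper. The paper's own proof is a one-liner: since the graph-independent value $\epsilon(v_1\cdots v_n\mathbf{e}^g)$ from Theorem~\ref{thm:independence} coincides with the formula \eqref{TQFT gn} already established for the 2D TQFT attached to the commutative Frobenius algebra $A$ (whose existence is guaranteed by the Frobenius-object structure and the remark following the TQFT axioms), the maps $\o_{g,n}$ literally are the values of that TQFT, and the corollary follows by identification. You instead re-verify from scratch that the closed formula satisfies the degree-zero CohFT axioms together with the unstable normalizations $\o_{0,1}=\eps$, $\o_{0,2}=\eta$, using the completeness relation \eqref{complete set} and the Euler-element expansion \eqref{Euler basis}, and then invoke the equivalence of 2D TQFTs with degree-zero CohFTs from Sections~\ref{sect:TQFT}--\ref{sect:CohFT} (exactly the characterization the paper restates at the start of Section~\ref{sect:cell TQFT}, via \eqref{o-gn conditions}). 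Your computations for CohFT~1--3 are accurate, and your key identity $\sum_{a,b}\eta^{ab}\epsilon(ue_a)\epsilon(we_b)=\epsilon(uw)$ is a correct consequence of \eqref{complete set}. The trade-off: the paper's argument is shorter because it leans on Theorem~\ref{prop:g-invariant}'s generalization \eqref{TQFT gn} and the prior existence statement for the TQFT of $A$, while yours is more explicit and makes visible exactly which algebraic identities in the Frobenius algebra underlie each gluing axiom, at the cost of still having to cite the TQFT/CohFT equivalence for the final step rather than proving the sewing axiom TQFT~4 directly.
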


\begin{proof}
Since the value of \eqref{independence} is the same
as
\eqref{TQFT gn}, it is a 2D TQFT. 
\end{proof}

The rest of the section is devoted to proving
Theorem~\ref{thm:independence}. 
We first give  three  examples of
graph independence.

\begin{lem} [Edge-removal lemma]
\label{lem:reduced}
Let $\gam\in \Gam_{g,n}$.
\begin{itemize}
\item \textbf{Case 1.} There is a disc-bounding loop $L$ in 
$\gam$. Let $\gam'\in \Gam_{g,n}$ be the graph
obtained by simply removing $L$ from $\gam$.
Note that we are not contracting $L$.

\item \textbf{Case 2.} The graph $\gam$
containts two edges $E_1$ and
$E_2$ between 
two distinct vertices  $p_i$ and  $p_j$
 that bound a disc. Let $\gam'\in \Gam_{g,n}$ 
be the graph obtained by 
removing $E_2$. Here again, we are just eliminating
$E_2$. 

\item \textbf{Case 3.} 
Two loops, $L_1$ and $L_2$, in $\gam$ are
attached to the $i$-th vertex $p_i$. If they are 
homotopic, then let $\gam'\in \Gam_{g,n}$
be the graph obtained by removing $L_2$ from 
$\gam$. 
\end{itemize}
In each of the above cases, we  have
\be\label{gam=gam'}
\o_A(\gam)(v_1,\dots,v_n) = \o_A(\gam')(v_1,\dots,v_n).
\ee
\end{lem}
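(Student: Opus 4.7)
The plan is to deduce each identity directly from the edge-contraction rules that the ECO functor of Definition~\ref{def:ECO functor} associates to the Frobenius algebra $A$. Under this functor, contracting a non-loop edge between $p_i$ and $p_j$ amounts to Frobenius multiplication at the fused vertex; contracting a non-separating loop at $p_i$ inserts the co-unit tensor $\delta(\mathbf{1})=\sum_{a,b}\eta^{ab}e_a\tensor e_b$ at the two new vertices, with $v_i$ multiplied into the first factor; and contracting a separating loop splits the graph as $(\gam_1,\gam_2)$ with the same tensor distributed between the two sides.

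For Case 1, the disc-bounding loop $L$ at $p_i$ is a separating loop: contracting it crushes the bounded disc's boundary to a point, producing a sphere carrying the isolated vertex $p_{i_2}\in\Gam_{0,1}$. The separating-loop rule then gives
\[
\o_A(\gam)(\ldots,v_i,\ldots)
=\sum_{a,b}\eta^{ab}\,\o_A(\gam')(\ldots,v_ie_a,\ldots)\,\eps(e_b).
\]
The completeness relation \eqref{complete set} applied to $v=\mathbf{1}$ yields $\sum_{a,b}\eta^{ab}\eps(e_b)e_a=\mathbf{1}$, so the right-hand side collapses to $\o_A(\gam')(\ldots,v_i,\ldots)$.

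For Case 2, I would first apply ECO 1 to contract $E_1$. The vertices $p_i$ and $p_j$ fuse, and $E_2$ becomes a loop at the resulting vertex. Because $E_1$ and $E_2$ together bounded a disc in $\gam$, that same disc is now bounded by the new loop alone, so the loop is disc-bounding in the contracted graph $\gam''$. Apply Case 1 to remove it, obtaining a graph that coincides with the result of contracting $E_1$ in $\gam'$. Running ECO 1 in reverse on $\gam'$ then identifies the value with $\o_A(\gam')(\ldots,v_i,\ldots,v_j,\ldots)$. For Case 3, the two homotopic loops $L_1,L_2$ at $p_i$ bound a single 2-cell (bigon). Consequently, in the cyclic order of half-edges at $p_i$ both halves of $L_2$ sit on the bigon-side of $L_1$. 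Applying ECO 2 to $L_1$ thus places both halves of $L_2$ at the same new vertex, turning $L_2$ into a loop there; and this new loop bounds the former bigon, so is disc-bounding in the contracted graph. Apply Case 1 to remove it; the resulting graph equals the result of contracting $L_1$ in $\gam'=\gam-L_2$. Running the ECO 2 step in reverse on $\gam'$ produces $\o_A(\gam)=\o_A(\gam')$. The argument splits into two sub-cases according to whether $L_1$ is separating in $\gam$, but is formally identical in each, since removing $L_2$ does not alter whether $L_1$ is separating.

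The main obstacle is the cyclic-order claim in Case 3: verifying that a bigon between two loops at the same vertex forces both half-edges of $L_2$ to lie on a single side of $L_1$ in the cyclic ordering of half-edges at $p_i$. Once this topological input is secured, the algebraic reduction to Case 1 is a routine application of the ECO rules.
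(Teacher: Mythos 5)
Your proposal is correct and follows essentially the same route as the paper's own proof: Case 1 is the same ECO~2 splitting-off of a one-vertex component, collapsed via $\o_A(\gam_0)(e_a)=\eps(e_a)$ and the completeness relation \eqref{complete set}, and Cases 2 and 3 are the same reduction to Case 1 by first contracting $E_1$ (resp.\ $L_1$) and then ``restoring'' it, using that the ECOs are natural transformations so the value of $\o_A(\gam)$ may be computed through either graph. The topological point you flag in Case 3 is left implicit in the paper as well, and it does hold: if the two half-edges of $L_2$ interleaved with those of $L_1$ in the cyclic order at $p_i$, the single transverse crossing would force algebraic intersection number $\pm 1$, impossible for homotopic loops on an oriented surface, so both half-edges of $L_2$ lie on one side of $L_1$ and $L_2$ becomes a disc-bounding loop after contracting $L_1$.
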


\begin{figure}[htb]
\includegraphics[height=1in]{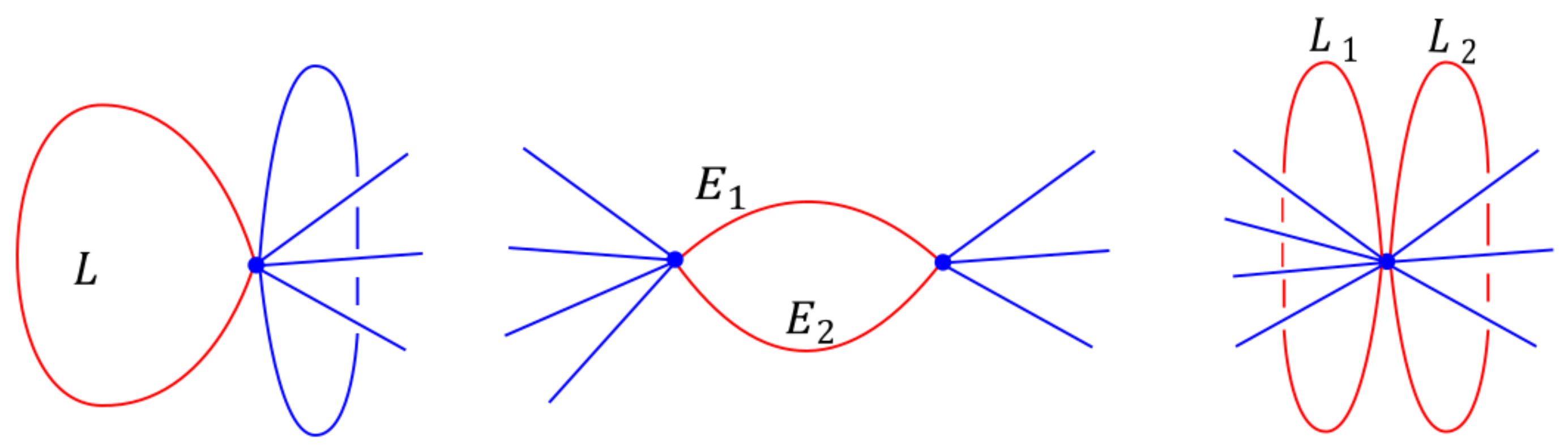}
\caption{Removal of disc-bounding edges. 
}
\label{fig:eliminate}
\end{figure}

\begin{proof}
Case 1.  Contracting a disc-bounding loop attached to 
$p_i$ creates 
$(\gam_0,\gam')\in \Gam_{0,1}\times \Gam_{g,n}$,
where $\gam_0$ consists of only one vertex and
no edges. The natural transformation 
corresponding to ECO 1 then gives
\begin{align*}
\o_A(\gam)(v_1,\dots,v_n) &= 
\sum_{a,b,k,\ell}\eta(v_i,e_ke_\ell)
\eta^{ka}\eta^{\ell b}\o_A(\gam_0)(e_a)
\o_A(\gam')(v_1,\dots,v_{i-1},e_b,v_{i+1}
\dots,v_n)
\\
&=
\sum_{a,b,k,\ell}\eta(v_i,e_ke_\ell)
\eta^{ka}\eta^{\ell b}\eta(\mathbf{1},e_a)
\o_A(\gam')(v_1,\dots,v_{i-1},e_b,v_{i+1}
\dots,v_n)
\\
&=
\sum_{b,k,\ell}\eta(v_i,\mathbf{1}\cdot e_\ell)
\eta^{\ell b}
\o_A(\gam')(v_1,\dots,v_{i-1},e_b,v_{i+1}
\dots,v_n)
\\
&=
\sum_{b,\ell}\eta(v_i,e_\ell)
\eta^{\ell b}
\o_A(\gam')(v_1,\dots,v_{i-1},e_b,v_{i+1}
\dots,v_n)
\\
&=\o_A(\gam')(v_1,\dots,v_{i-1},v_i,v_{i+1}
\dots,v_n).
\end{align*}

 Case 2. Contracting Edge $E_1$ makes $E_2$
 a disc-bounding 
loop at $p_i$. We can remove it by
Case 1. Note that the new vertex  is assigned
with $v_iv_j$. Restoring $E_1$  makes
the graph exactly the one obtained by removing
 $E_2$ from $\gam$. Thus \eqref{gam=gam'}
holds. 

\smallskip

Case 3. Contracting Loop $L_1$ makes $L_2$
a disc-bounding loop. Hence we can remove it
by Case 1. Then restoring $L_1$ creates a graph
obtained from $\gam$ by removing $L_2$.
Thus \eqref{gam=gam'}
holds. 
\end{proof}

\begin{rem} The three cases
treated above correspond to removing
degree $1$ and $2$ vertices from the dual ribbon
graph.
\end{rem}

\begin{Def}[Reduced graph]
A cell graph is \textbf{reduced} if it
 does not contain any disc-bounding
loops or disc-bounding bigons. In terms of
 dual ribbon graphs,   the dual
 of a reduced cell graph 
has no vertices of degree $1$ or $2$.
\end{Def}

We can see from Lemma~\ref{lem:reduced}, Case 1,
that
every graph $\gam\in\Gam_{0,1}$ gives the 
same map 
\be\label{01}
\o_A(\gam)(v) = \epsilon(v). 
\ee
Similarly, Cases 2 and 3 of 
Lemma~\ref{lem:reduced}  show 
that every graph $\gam\in \Gam_{0,2}$ defines $$
\o_A(\gam) (v_1,v_2) = \eta(v_1,v_2).
$$
This is because we can remove all edges and loops but
one that connects the two vertices.  Then 
by the natural transformation corresponding to   ECO 1,
the value of the assignment $\o_A(\gam)$ 
is $\epsilon(v_1v_2)=
\eta(v_1,v_2)$.

\begin{proof}[Proof of Theorem~\ref{thm:independence}]
We use the induction on $m=2g-2+n$. The base
case is $m=-1$, or $(g,n) = (0,1)$, for which
the theorem holds by \eqref{01}.
Assume that \eqref{independence}
holds for all $(g,n)$ with
$2g-2+n<m$. Now let $\gam
\in \Gam_{g,n}$ be a cell graph 
of type $(g,n)$  such that
$2n-2+n=m$. 
Choose an
arbitrary straight edge of $\gam$ that connects
two distinct vertices, say  $p_i$ and  $p_j$. 
Then the natural transformation of
 contracting this edge to $\gam'$ gives
$$
\o_A(\gam)(v_1,\dots,v_n)  = 
\o_A(\gam')(v_1,\dots,v_{i-1},v_iv_j,
v_{i+1} \dots, \widehat{v_j},\dots,v_n)
=\epsilon(v_1\dots v_n \mathbf{e}^g).
$$
If we have chosen an arbitrary
loop attached to  $p_i$, then its contraction by
ECO 2 gives two cases, depending  on whether
the loop is a loop of handle or a separating 
loop. For the former case, we have 
a graph $\gam'$, and by appealing to
\eqref{complete set} and
\eqref{Euler basis}, we obtain
\begin{align*}
\o_A(\gam)(v_1,\dots,v_n)  
&= 
\sum_{a,b,k,\ell}\eta(v_i,e_ke_\ell)\eta^{ka}
\eta^{\ell b}
\o_A(\gam')(v_1,\dots,v_{i-1},
e_a,e_b,v_{i+1},\dots,v_n)
\\
&=
\sum_{a,b,k,\ell}\eta(v_ie_k,e_\ell)\eta^{ka}
\eta^{\ell b}
\o_A(\gam')(v_1,\dots,v_{i-1},
e_a,e_b,v_{i+1},\dots,v_n)
\\
&=
\sum_{a,k}\eta^{ka}
\o_A(\gam')(v_1,\dots,v_{i-1},
e_a,v_ie_k,v_{i+1},\dots,v_n)
\\
&=
\sum_{a,k}\eta^{ka}
\epsilon(v_1\cdots v_n \mathbf{e}^{g-1}e_ae_b )
\\
&=
\epsilon(v_1\cdots v_n \mathbf{e}^g).
\end{align*}
For the case of a separating loop, 
ECO 2 makes $\gam\rar (\gam_1,\gam_2)$,
and
we have
\begin{align*}
\o_A(\gam)(v_1,\dots,v_n)  
&= 
\sum_{a,b,k,\ell}\eta(v_i,e_ke_\ell)\eta^{ka}
\eta^{\ell b}
\o_A(\gam_1)\big(v_{I_-},e_a,v_{I_+}\big)
\o_A(\gam_2)\big(v_{J_-},e_b,v_{J_+}\big)
\\
&=
\sum_{a,b,k,\ell}\eta(v_i,e_ke_\ell)\eta^{ka}
\eta^{\ell b}
\epsilon\left(e_a \prod_{c\in I}v_c
\mathbf{e}^{g_1}\right)
\epsilon\left(e_b \prod_{d\in J}v_d
\mathbf{e}^{g_2}\right)
\\
&=
\sum_{a,b,k,\ell}\eta(v_ie_k,e_\ell)\eta^{ka}
\eta^{\ell b}
\eta\left(\prod_{c\in I}v_c, e_a
\mathbf{e}^{g_1}\right)
\epsilon\left(e_b \prod_{d\in J}v_d
\mathbf{e}^{g_2}\right)
\\
&=
\sum_{a,k}\eta^{ka}
\eta\left(\prod_{c\in I}v_c\mathbf{e}^{g_1}
, e_a\right)
\epsilon\left(v_ie_k \prod_{d\in J}v_d
\mathbf{e}^{g_2}\right)
\\
&=
\epsilon\left(v_i 
\prod_{c\in I}v_c\mathbf{e}^{g_1}
\prod_{d\in J}v_d
\mathbf{e}^{g_2}\right)
\\
&=
\epsilon(v_1\cdots v_n\mathbf{e}^{g_1+g_2}).
\end{align*}
Therefore, no matter how we apply 
ECO 1 or ECO 2, we always obtain the same
result. This completes the proof.
\end{proof}


\section{TQFT-valued topological recursion}
\label{sect:TR}

There is a direct relation between a Frobenius algebra
and Gromov-Witten theory when $A$ is given by the
big quantum cohomology of a target space. Since 
these Frobenius algebras are usually infinite-dimensional
over the ground field, they do not correspond to a 
2D TQFT discussed in the previous sections. 
 But for the case that the target space is
$0$-dimensional, the TQFT indeed captures the whole
Gromov-Witten theory. 

In this section, we present a general framework. 
Fundamental examples are  $A=\bQ$,
which gives $\psi$-class intersection numbers on
$\Mbar_{g,n}$, and  the 
center of the group algebra of a finite group
$A=Z\bC[G]$, which produces
Gromov-Witten invariants of the classifying space 
$BG$. The first example is considered in
\cite{OM3,OM4}. The latter case will be discussed
elswhere \cite{OM9}.

We wish to solve a graph enumeration 
problem, where as a graph we consider
a cell graph, and 
each of its  vertex is \emph{colored}
by a  \emph{parameter} $v\in A$. 
We also impose functoriality under
the edge-contraction  of
Definition~\ref{def:ECO} for this coloring.
The ECOs reduce the complexity of coloring
considerably, because the functoriality  makes
the {coloring} process graph independent,
as we have shown in the last section.
Thus the answer is just the number of 
graphs times the value 
$\epsilon(v_1\cdots v_n \mathbf{e}^g)$ for 
each topological type $(g,n)$.
Here comes the difficulty: there are infinitely many
graphs for each topological type, since we allow
multiple edges and loops. The standard idea 
for such counting problem is to appeal to the
\emph{Laplace transform}, which is 
 introduced by Laplace 
 for this particular
context of counting an infinite number of 
objects. Let us denote by
\be\label{Gam(mu)}
\Gam_{g,n}(\mu_1,\dots,\mu_n)
\ee
the set of all cell graphs with labeled 
vertices of degrees $(\mu_1,\dots,\mu_n)$. 
Denoting by $c_d$  the number of $d$-cells in a
cell-decomposition of a surface
of genus $g$, $d=0,1,2$,  we have
$
n= c_0$, $\sum_{i=1} ^n \mu_i = 2 c_1,$ and
$2-2g = n-c_1+c_2.
$
Therefore, \eqref{Gam(mu)} is a finite set.

Counting processes become easier if we do not
have any object with non-trivial automorphism.
There are many ways to eliminate automorphism,
for example, the minimalistic way of 
imposing the least possible conditions, or an excessive
way to kill automorphisms but for the most
objects the conditions are redundant. Actually, it
is  known 
that most of the cell graphs counted in 
\eqref{Gam(mu)}
are without any non-trivial automorphisms.
Since any possible automorphism induces a
cyclic permutation of half-edges incident to 
each vertex, the easiest way to disallow
any automorphism is to assign an outgoing 
arrow to \emph{one} of these half-edges,
as in \cite{DMSS,MS} (but not as a quiver). An automorphism
should preserve the arrowed half-edges,
 in addition to labeled vertices.
We denote by
\be\label{arrowed graphs}
\widehat{\Gam}_{g,n}(\mu_1,\dots,\mu_n)
\ee
the set of \textbf{arrowed cell graphs} with
labeled vertices of degrees $(\mu_1,\dots,\mu_n)$,
and by
\be\label{Cgn}
C_{g,n}(\mu_1,\dots,\mu_n) := 
\left|\widehat{\Gam}_{g,n}(\mu_1,\dots,\mu_n)\right|
\ee
its cardinality. This number is
always a non-negative integer, and 
$
C_{0,1}(2m) = \frac{1}{m+1} \binom{2m}{m}
$
is the $m$-th Catalan number.

We use the notation $\o_A(\gam)=\o_{g,n}:A^{\tensor n}
\lrar K$ of Corollary~\ref{cor:ECA=TQFT}
for $\gam\in \widehat{\Gam}_{g,n}(\mu_1,\dots,\mu_n)$. 
We are interested in considering the 
$C_{g,n}(\mu_1,\dots,\mu_n)$-weighted
TQFT
\be\label{Cgngamgn}
C_{g,n}(\mu_1,\dots,\mu_n)\cdot \o_{g,n}
:A^{\tensor n}
\lrar K,
\ee
and applying the edge-contraction operations
to these maps.
We contract the edge of $\gam$  that carries the
outgoing arrow at the 
first vertex $p_1$, then place a new arrow to the
half-edge next to the original half-edge
with respect to the cyclic ordering 
induced by the orientation of the surface.
If the edge that carries the outgoing arrow 
at $p_1$ is a loop, then after  
splitting $p_1$, we place 
another arrow to the next half-edge 
at each of the two newly created vertices, again next to 
the original loop. From this process we obtain 
the following counting formula:

\begin{prop}
The 2D TQFT weighted by the number of arrowed 
cell graphs
satisfies the following
equation.
\be\label{counting formula}
\begin{aligned}
&C_{g,n}(\mu_1,\dots,\mu_n)\cdot \o_{g,n}
(v_1,\dots,v_n) 
\\
&=
\sum_{j=2} ^n \mu_j 
C_{g,n-1}(\mu_1+\mu_j-2,\mu_2,\dots,
\widehat{\mu_j},\dots,\mu_n)
\cdot \o_{g,n-1}
(v_1v_j,v_2,\dots,\widehat{v_j},\dots,v_n)
\\
&+
\sum_{\a+\b=\mu_1-2}
C_{g-1,n+1}(\a,\b,\mu_2,\dots,\mu_n)\cdot
\o_{g-1,n+2}\big(\delta(v_1),v_2,\dots,v_n\big)
\\
&+
\sum_{\a+\b=\mu_1-2}
\sum_{\substack{
g_1+g_2=g\\I\sqcup J=\{2,\dots,n\}}}
\sum_{a,b,k,\ell}
\eta(v_1,e_ke_\ell)\eta^{ka}\eta^{\ell b}
\\
&\qquad
\times
\left(C_{g_1,|I|+1}(\a,\mu_I)\cdot
\o_{g_1,|I|+1}(e_a,v_I)
\right)
\left(C_{g_2,|J|+1}(\b,\mu_J)\cdot
\o_{g_2,|J|+1}(e_b,v_J)
\right).
\end{aligned}
\ee
\end{prop}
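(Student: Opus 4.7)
The plan is to partition the set $\widehat{\Gam}_{g,n}(\mu_1,\dots,\mu_n)$ of arrowed cell graphs according to the type of the edge $E$ targeted by the outgoing arrow at $p_1$, apply the corresponding edge-contraction operation of Definition~\ref{def:ECO} to each piece, and translate the resulting graph relations into algebraic identities via the ECO-functor axioms of Definition~\ref{def:ECO functor}. Graph independence (Theorem~\ref{thm:independence}) then makes the relationship between the graph counts and the algebraic values transparent: both sides of \eqref{counting formula} will, after these reductions, reduce to a sum of terms of the form $(\text{graph count})\cdot\epsilon(v_1\cdots v_n\mathbf{e}^g)$.

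In the first case, $E$ joins $p_1$ to a distinct vertex $p_j$ with $j\in\{2,\dots,n\}$. Contracting $E$ via ECO~1 produces an arrowed graph $\gam'\in\widehat{\Gam}_{g,n-1}(\mu_1+\mu_j-2,\mu_2,\dots,\widehat{\mu_j},\dots,\mu_n)$, with the new arrow placed on the half-edge immediately following the original one in the cyclic order at the merged vertex. The combinatorial lemma underlying the first sum is that the map $\gam\mapsto\gam'$ is $\mu_j$-to-one: the reverse operation (splitting the merged vertex into a degree-$\mu_1$ vertex carrying the arrow and a degree-$\mu_j$ vertex labeled $j$, joined by a new edge) has $\mu_j$ realizations corresponding to where the new half-edge at $p_j$ sits in the cyclic order of the daughter vertex. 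Composing with the ECO-functor axiom assigning multiplication in $A$ to ECO~1 gives
\[
\o_A(\gam)(v_1,\dots,v_n)=\o_A(\gam')(v_1v_j,v_2,\dots,\widehat{v_j},\dots,v_n),
\]
and summing over all such $\gam$ reproduces the first sum of \eqref{counting formula}.

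In the remaining cases $E$ is a loop at $p_1$, and ECO~2 splits $p_1$ into two daughter vertices of degrees $\alpha,\beta$ with $\alpha+\beta=\mu_1-2$, the arrows placed as specified in Definition~\ref{def:ECO}. When the resulting graph is connected (loop of handle), we obtain $\gam'\in\widehat{\Gam}_{g-1,n+1}(\alpha,\beta,\mu_2,\dots,\mu_n)$; the ECO-functor axiom assigns comultiplication to ECO~2, which, together with the basis expansion \eqref{delata 1} of $\delta(\mathbf{1})$, gives
\[
\o_A(\gam)(v_1,\dots,v_n)=\o_A(\gam')(\delta(v_1),v_2,\dots,v_n),
\]
and summing over $\gam$ of this type—organized by the degree decomposition $\alpha+\beta=\mu_1-2$—yields the second sum. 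When the contraction disconnects $\gam$ into a pair $(\gam_1,\gam_2)\in\widehat{\Gam}_{g_1,|I|+1}(\alpha,\mu_I)\times\widehat{\Gam}_{g_2,|J|+1}(\beta,\mu_J)$ with $I\sqcup J=\{2,\dots,n\}$ and $g_1+g_2=g$, the monoidality of $\o$ over disjoint unions combined with the comultiplication on $v_1$ yields the third sum, with the explicit kernel $\eta(v_1,e_ke_\ell)\eta^{ka}\eta^{\ell b}$ being just the index expansion of $\delta(v_1)$ distributed across the two factors as in \eqref{tensor pairing}.

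The main obstacle is the combinatorial bookkeeping in case~(i): establishing that the edge-contraction map is exactly $\mu_j$-to-one under the stated arrow-placement convention and the automorphism conventions built into $C_{g,n}(\mu_1,\dots,\mu_n)$. Once this count is verified, the algebraic content of \eqref{counting formula} is immediate from graph independence and the ECO-functor axioms; the only Frobenius-algebraic ingredients used are the expansion \eqref{delata 1} of $\delta(\mathbf{1})$ and the completeness relation \eqref{complete set}, both already in hand.
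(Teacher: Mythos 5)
Your overall strategy---partitioning $\widehat{\Gam}_{g,n}(\mu_1,\dots,\mu_n)$ according to the nature of the arrowed edge at $p_1$, applying ECO~1 or ECO~2, and letting the ECO-functor axioms together with graph independence (Theorem~\ref{thm:independence}) convert the graph relations into the algebraic identity \eqref{counting formula}---is exactly the route the paper takes: the paper describes the contract-and-re-arrow procedure and identifies the resulting numerical recursion with the known Catalan-type recursion of \cite{DMSS,MS,WL}, multiplied by $\o_{g,n}(v_1,\dots,v_n)=\epsilon(v_1\cdots v_n\mathbf{e}^g)$. Your treatment of the two loop cases, including the index expansion of $\delta(v_1)$ producing the kernel $\eta(v_1,e_ke_\ell)\eta^{ka}\eta^{\ell b}$, is correct.

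However, your justification of the factor $\mu_j$ in the first sum is not right, and taken literally it would lead to a different (and wrong) recursion. In the convention behind $C_{g,n}(\mu_1,\dots,\mu_n)$ (as in \cite{DMSS,MS}), \emph{every} labeled vertex carries its own outgoing arrow, not only $p_1$. When the arrowed edge at $p_1$ ends at $p_j$ and is contracted, both the splitting of the merged vertex and the position at which the new half-edge must be re-inserted on the $p_j$ side are uniquely determined: the arrow at the merged vertex marks where the contracted edge sat at $p_1$, the block of $\mu_1-1$ consecutive half-edges beginning there belongs to $p_1$, and the retained cyclic order forces the re-inserted half-edge at $p_j$ to sit at the seam of the remaining block. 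So there is no freedom of the kind you invoke (``where the new half-edge at $p_j$ sits''); if only $p_1$ carried an arrow, the contraction map would be essentially bijective and the factor $\mu_j$ would be absent---for instance $(g,n)=(0,2)$, $(\mu_1,\mu_2)=(2,2)$ forces $C_{0,2}(2,2)=2$ via \eqref{counting formula}, which is only consistent with both vertices being arrowed. The true source of the factor $\mu_j$ is that the contraction destroys the arrow carried by $p_j$, so reconstructing $\gam$ from $\gam'$ requires choosing that arrow among the $\mu_j$ half-edges of $p_j$; this is why the map $\gam\mapsto\gam'$ is $\mu_j$-to-one. It also explains why no analogous multiplicities appear in the loop terms: there the rule of Definition~\ref{def:ECO} places an arrow at each newly created vertex and no other vertex loses its arrow. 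With this correction, the rest of your argument goes through and coincides with the paper's.
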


This is exactly the same formula of 
\cite{DMSS, MS, WL}
multiplied by 
$$\o_{g,n}(v_1,\dots,v_n)
=\epsilon(v_1\dots v_n\mathbf{e}^g).
$$
Let us now consider a \textbf{Frobenius algebra
twisted topological 
recursion}. To simplify the notation, we adopt the
following way of writing:
\be\label{2 function notation}
(f\tensor h)\circ \delta
=\sum_{a,b,k,\ell}\eta(\bullet,e_ke_\ell)
\eta^{ka}\eta^{\ell b} f(e_a)h(e_b),
\ee
where $f,h:A\lrar K$ are linear functions on $A$.

First let us  review the original topological 
recursion of  \cite{EO2007} 
defined on a \textbf{spectral curve}
$\Sigma$, which is just a disjoint union of
$r$ copies of open discs.
Let $U$ be the unit disc centered at $0$ of the 
complex $z$-line. We choose $r$ sets of functions
$(x_\a,y_\a)$, $\a=1,\dots,r$, defined on $U$
with Taylor expansions
\be\label{xy}
x_\a(z) = z^2+\sum_{k=3}^\infty a_{\a,k}z^k, \qquad
y_\a(z) = z+\sum_{k=2}^\infty b_{\a,k}z^k,
\ee
and a  meromorphic $1$-form (Cauchy kernel)
\be\label{omega}
\omega_{\a}^{a-b} (z)= \frac{dz}
{z-a} - \frac{dz}{z-b} + \omega_\a
\ee
on $U$, where $a,b\in U$, and
$\omega_\a$ is a holomorphic 
$1$-form on $U$. Since each $x_\a:U\lrar \bC$ is
a $2:1$ map, we have an involution on $U$
that 
keeps the same $x_\a$-value: 
\be\label{sigma alpha}
\sigma_\a: U
\lrar U, \qquad x_\a\big(\sigma_\a(z)\big) = x_\a(z).
\ee
To avoid confusion, we label $r$ copies of $U$
by $U_1,\dots,U_r$, and consider the functions
$(x_\a,y_\a)$ to be defined on $U_\a$. 
The \textbf{topological recursion}
is the following recursive equation
\begin{multline}
\label{TR}
 W_{g,n}(z_1,\dots,z_n) 
=\frac{1}{2\pi i}\sum_{\a=1}^r
\oint_{\partial U_\a}\frac{\omega_\a^{\sigma_\a(z)-z}(z_1)}
{\left(y_\a\big(\sigma_\a(z)\big)-y_\a(z)\right)dx_\a(z)}
\\
\times
\left[
W_{g-1,n+1}\big(z,\sigma_\a(z),z_2,\dots,z_n\big)
+\sum_{\substack{
g_1+g_2=g\\
I\sqcup J=\{2,\dots,n\}}}^{\text{No}\;(0,1)}
W_{g_1,|I|+1}(z,z_I)W_{g_2,|J|+1}
\big(\sigma_\a(z),z_J\big)
\right]
\end{multline}
on symmetric  meromorphic $n$-differential
forms $W_{g,n}$ defined on the disjoint union
$U_1^n \sqcup  \cdots \sqcup U_r ^n$ for $2g-2+n>0$. 
Here, $z_I = (z_i)_{i\in I}$, and 
``No $(0,1)$'' in the summation
means the partition $g=g_1+g_2$
and the set partition $I\sqcup J=\{2,\dots,n\}$ do
not allow $g_1=0$ and $I=\emptyset$, or $g_2=0$
and $J=\emptyset$.
The integration is performed with respect 
to $z\in \partial U$. Note that 
the differential form in the big bracket $[\;\;\;]$
in \eqref{TR} is a symmetric quadratic differential
in the variable $z\in U_\a$. The expression
$1/dx_\a(z)$ is the contraction operator
with respect to the vector field
$\frac{\partial z}{\partial x_\a}
\frac{\partial}{\partial z}$ on $U_\a$. 
Thus the integrand of the recursion becomes
a meromorphic $1$-form on $U_\a$ in the
$z$-variable, for which the integration is 
performed. The multiplication
by $\omega_\a^{\sigma_\a(z)-z}(z_1)$ is
simply the symmetric tensor product with
a $1$-form proportional to $dz_1$. 
The $1$-form $W_{0,1}$ and the
$2$-form $W_{0,2}$ are defined separately:
\be
\label{W01}
W_{0,1}(z) :=\sum_{\a=1}^r y_\a(z)dx_\a(z)
\ee
is defined on the disjoint union $U_1\sqcup \cdots
\sqcup U_r$. If $z\in U_\a$, then 
$W_{0,1}(z) = y_\a(z)dx_\a(z)$.   
In the form of \eqref{TR}, however,
$W_{0,1}$ does not
appear anywhere.
Similarly, $W_{0,2}$ is defined by
\be\label{W02}
W_{0,2}(z_1,z_2):= d_{z_1}\omega_\a ^{z_1-b}
(z_2)
\ee
if $z_1,z_2\in U_\a$. Here, the constant $b\in U_\a$
does not play any role. This $2$-form explicitly
appears in the recursion part (the terms in the
big bracket $[\;\;\;]$) of the formula.

\begin{Def} A
 \textbf{
Frobenius algebra twisted topological recursion} for
$
{\cW}_{g,n}=
\cW_{g,n}(z_1,\dots,z_n)
:A^{\tensor n}\lrar K
$
is the following formula:
\be\label{TQFT-TR}
\begin{aligned}
{\cW}_{g,n}&(z_1,\dots,z_n;v_1,\dots,v_n)
\\
=&\frac{1}{2\pi i}\sum_{\a=1}^r
\oint_{\partial U_\a}
\sum_{a,b,k,\ell} 
\cK_\a\big(z,\sigma_\a(z),z_1;e_k,e_\ell,v_1\big)
\\
&\times 
\left[
{\cW}_{g-1,n+1}\big(
z,\sigma_\a(z),z_2,\dots,z_n;e_a,e_b,v_2,\dots,v_n)
\phantom{
\sum_{\substack{
g_1+g_2=g\\
I\sqcup J=\{2,\dots,n\}}}^{\text{No}\;(0,1)}}
\right.
\\
&+
\left.
\sum_{\substack{
g_1+g_2=g\\
I\sqcup J=\{2,\dots,n\}}}^{\text{No}\;(0,1)}
{\cW}_{g_1,|I|+1}(z,z_I;e_a,v_I)
 {\cW}_{g_2,|J|+1}\big(\sigma_\a(z),z_J;e_b,v_J\big)
\right].
\end{aligned}
\ee
Here 
\be\label{kernel}
\cK_\a\big(z,\sigma_\a(z),z_1;e_k,e_\ell,v_1\big)
=
\frac{\omega_\a^{\sigma_\a(z)-z}(z_1)
\eta(e_ke_\ell,v_1)\eta^{ka}\eta^{\ell b}}
{\left(y_\a\big(\sigma_\a(z)\big)-y_\a(z)\right)dx_\a(z)}
\ee
is the integration-summation kernel.
Symbolically we can write \eqref{TQFT-TR} as
\be\label{TQFT-TR-symbolic}
\begin{aligned}
{\cW}_{g,n}
&=\frac{1}{2\pi i}\sum_{\a=1}^r
\oint_{\partial U_\a}K_\a\big(z,\sigma_\a(z),z_1\big)
\\
&\times 
\left[
{\cW}_{g-1,n+1}\circ \delta
+ 
\sum_{\substack{
g_1+g_2=g\\
I\sqcup J=\{2,\dots,n\}}}^{\text{No}\;(0,1)}
\left({\cW}_{g_1,|I|+1}
\tensor {\cW}_{g_2,|J|+1}
\right)\circ \delta
\right]
\end{aligned}
\ee
with the usual integration kernel
$$
K_\a\big(z,\sigma_\a(z),z_1\big) := 
\frac{\omega_\a^{\sigma_\a(z)-z}(z_1)}
{\left(y_\a\big(\sigma_\a(z)\big)-y_\a(z)\right)
dx_\a(z)}.
$$
\end{Def}

\begin{thm}
\label{thm:TR implies TQFT}
The topological recursion \eqref{TQFT-TR}
uniquely determines the $(A^{\tensor n})^*$-valued
$n$-linear differential form $\cW_{g,n}$ from the 
initial data $\cW_{0,2}$. If the initial data is given
by
$$
\cW_{0,2}(z_1,z_2;v_1,v_2) = W_{0,2}(z_1,z_2)
\cdot \eta(v_1,v_2)
$$
for a $2$-form \eqref{W02}, 
then there exists a solution $\{W_{g,n}\}$
of the 
topological recursion \eqref{TR} and
a 2D TQFT $\{\o_{g,n}\}$ such that
\be\label{TR=TQFT}
\cW_{g,n}(z_1,\dots,z_n;v_1,\dots,v_n)
= W_{g,n}(z_1,\dots,z_n)\cdot 
\o_{g,n}(v_1,\dots,v_n).
\ee
\end{thm}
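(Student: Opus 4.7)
The plan is to prove both claims by induction on the complexity $m=2g-2+n$, exploiting the fact that every term on the right-hand side of \eqref{TQFT-TR} involves $\cW_{g',n'}$ with $2g'-2+n'<m$. For uniqueness, I first observe that once $\cW_{0,2}$ is fixed, the recursion expresses $\cW_{g,n}$ with $2g-2+n\geq 1$ as a sum of residues involving only lower-complexity terms, so standard induction on $m$ yields a unique symmetric $n$-form valued in $(A^{\tensor n})^*$. The base case $(g,n)=(0,3)$ is directly computed from $\cW_{0,2}$.

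For the factorization \eqref{TR=TQFT}, I induct on $m$ with the hypothesis that $\cW_{g',n'}(z_1,\ldots,z_{n'};v_1,\ldots,v_{n'})=W_{g',n'}(z_1,\ldots,z_{n'})\cdot\o_{g',n'}(v_1,\ldots,v_{n'})$ for all $2g'-2+n'<m$, where $W_{g',n'}$ is obtained from the untwisted recursion \eqref{TR} with initial data $W_{0,2}$, and $\o_{g',n'}$ is the unique 2D TQFT attached to $A$ by Theorem~\ref{thm:independence}. Substituting into the right-hand side of \eqref{TQFT-TR}, the kernel $\cK_\a$ factors as $K_\a(z,\sigma_\a(z),z_1)\cdot\eta(e_ke_\ell,v_1)\eta^{ka}\eta^{\ell b}$, so each summand splits into a product of a $z$-dependent piece and an algebraic piece. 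The loop-of-handle term produces the $z$-factor $W_{g-1,n+1}(z,\sigma_\a(z),z_2,\ldots,z_n)$ times the algebraic expression
\[
\sum_{a,b,k,\ell}\eta(e_ke_\ell,v_1)\eta^{ka}\eta^{\ell b}\o_{g-1,n+1}(e_a,e_b,v_2,\ldots,v_n),
\]
which I recognize as $\o_{g-1,n+1}(\delta(v_1),v_2,\ldots,v_n)$, and this equals $\o_{g,n}(v_1,\ldots,v_n)$ by the CohFT/TQFT axiom \eqref{o-2}. The splitting term, by an analogous computation combined with \eqref{o-3}, reproduces the same $\o_{g,n}(v_1,\ldots,v_n)$ out of products $\o_{g_1,|I|+1}(e_a,v_I)\o_{g_2,|J|+1}(e_b,v_J)$. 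Since every algebraic piece collapses to the common factor $\o_{g,n}(v_1,\ldots,v_n)$, it may be pulled outside the sum and the contour integral, leaving exactly the right-hand side of \eqref{TR} multiplied by $\o_{g,n}$. This establishes \eqref{TR=TQFT}, because uniqueness then identifies $W_{g,n}$ as the standard topological recursion output.

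The main obstacle is neither analytic nor combinatorial but tensorial bookkeeping: one must verify that the particular index pattern $\eta(e_ke_\ell,v_1)\eta^{ka}\eta^{\ell b}$ that appears multiplying $\cW_{\cdot,\cdot}$ in \eqref{TQFT-TR} genuinely encodes the comultiplication $\delta(v_1)$ in the two different roles dictated by the TQFT axioms \eqref{o-2} and \eqref{o-3}. In the handle case, the two dummy indices $a,b$ are contracted against two slots of the \emph{same} $\o_{g-1,n+1}$, so one needs \eqref{delata 1} and the basis expansion \eqref{complete set} together with commutativity of $A$ to rewrite the expression as $\o_{g-1,n+1}(\delta(v_1),\ldots)$; in the splitting case, $a$ and $b$ sit on different factors, and the identification relies on \eqref{prod=coprod} together with the symmetry \eqref{symmetric} of $\eta$ on products. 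Once these identifications are checked, the symmetry of $\cW_{g,n}$ in the $v$-arguments follows automatically from the symmetry of $\o_{g,n}$ guaranteed by CohFT 0, and the symmetry in the $z$-arguments follows from the corresponding property of the untwisted Eynard--Orantin $W_{g,n}$.
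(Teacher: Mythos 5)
Your proposal is correct and follows essentially the same route as the paper: induction on $m=2g-2+n$ starting from $\cW_{0,2}$, substituting the closed-form TQFT values into the right-hand side of \eqref{TQFT-TR}, and observing that the algebraic factor in every term (handle, splitting, and the hidden multiplication terms with $g_1=0$, $|I|=1$ handled via \eqref{prod=coprod}) collapses to the common value $\o_{g,n}(v_1,\dots,v_n)=\epsilon(v_1\cdots v_n\mathbf{e}^g)$, so it pulls out of the integral and leaves the untwisted recursion \eqref{TR} for $W_{g,n}$. The paper states this more tersely (invoking functoriality under ECO~2 plus the remark following the theorem), while you spell out the tensor bookkeeping explicitly, but the argument is the same.
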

\begin{proof}
The proof is done by induction on $m=2g-2+n$ 
with the base case $m=0$. We assume that
\eqref{TR=TQFT} holds for all $(g,n)$
such that $2g-2+n <m$, and use the value
$\o_{g,n}=\epsilon(v_1\cdots v_n
\mathbf{e}^g)$ given by
\eqref{TQFT gn} for the values of $(g,n)$
in the range of induction hypothesis.
Then by \eqref{TQFT-TR} and 
functoriality under ECO 2, we conclude
that \eqref{TR=TQFT} also holds for all $(g,n)$
such that $2g-2+n =m$.
\end{proof}

\begin{rem}
In comparison to edge-contraction operations, we 
note that the multiplication case ECO~1
 does not seem to have a counterpart in an explicit
way. It is actually included in 
the terms involving $g_1=0, |I|=1$
and $g_2=0,|J|=1$  in the 
partition sum, and \eqref{prod=coprod} is
used to change the 
comultiplication to multiplication. 
More precisely, if $I = \{i\}$, then it gives 
a term 
\begin{align*}
&
\cW_{0,2}(z,z_i;e_a,v_i)
\cW_{g,n-1}\big(\sigma_\a(z),z_2,
\dots,\widehat{z_i},\dots,z_n;
e_b,v_2,\dots,\widehat{v_i},\dots,v_n\big)
\\
&=
W_{0,2}(z,z_i)W_{g,n-1}\big(\sigma_\a(z),z_2,
\dots,\widehat{z_i},\dots,z_n\big)
\\
&\qquad
\cdot \o_{0,2}(e_a,v_i)
 \o_{g,n-1}(
e_b,v_2,\dots,\widehat{v_i},\dots,v_n)
\end{align*}
in the partition sum, assuming the induction
hypothesis. 
We also note that
\begin{align*}
&\sum_{a,b,k,\ell}
\eta(e_ke_\ell,v_1)\eta^{ka}\eta^{\ell b}
\o_{0,2}(e_a,v_i)
 \o_{g,n-1}(
e_b,v_2,\dots,\widehat{v_i},\dots,v_n)
\\
&=
\sum_{b,\ell}
\eta^{\ell b}
\o_{0,2}(v_1e_\ell,v_i)
 \o_{g,n-1}(
e_b,v_2,\dots,\widehat{v_i},\dots,v_n)
\\
&=
\sum_{b,\ell}
\eta^{\ell b}
\eta(v_1v_i,e_\ell)
 \o_{g,n-1}(
e_b,v_2,\dots,\widehat{v_i},\dots,v_n)
\\
&=
\o_{g,n-1}(v_1v_i,v_1,\dots,\widehat{v_i},\dots,v_n).
\end{align*}
\end{rem}

By taking the Laplace transform
of \eqref{counting formula} using the method 
of \cite{DMSS, MS}, we obtain 
a solution 
$\cW_{g,n}=
W_{g,n}^D(z_1,\dots,z_n)\cdot \o_{g,n}$
to the topological recursion, 
where $W_{g,n}^D$ is given by
\cite[(4.14)]{DMSS} with respect
to a global spectral curve 
of \cite[Theorem~4.3]{DMSS}, and $\o_{g,n}$
is the TQFT corresponding to the Frobenius 
algebra $A$.

\bigskip

\part{Quantization of Higgs Bundles}

\section{Quantum curves}

The  cohomology ring $H^*(X,\bC)$
of a K\"ahler 
variety $X$ 
is a $\bZ/2\bZ$-graded 
$\bZ/2\bZ$-commutative Frobenius algebra over $\bC$. 
The genus $0$ Gromov-Witten
invariants of $X$ define the \emph{big quantum 
cohomology} of $X$, which is a quantum deformation 
of the cohomology ring. If the mirror symmetry
is established for $X$, then  the information of
big quantum cohomology of $X$ is supposed to be
encoded in 
holomorphic geometry of a mirror dual space $Y$.
Gromov-Witten invariants are generalized to all
values of $(g,n)$ with $2g-2+n>0, g\ge 0, n>0$. 
The question  is:

\begin{quest} What should be the
holomorphic geometry on $Y$ that captures 
 all genera Gromov-Witten invariants of $X$
through  mirror symmetry?
\end{quest}

Since the transition from  $g=0$ to all 
values of $g\ge 0$ is indeed a \emph{quantization}, 
the holomorphic object on $Y$ that should capture
higer genera Gromov-Witten invariants of $X$ 
is a \emph{quantum geometry} of $Y$. 
A na\"ive guess may be that it should be a $\cD$-module
that represents $Y$ as its classical limit. Hence
 the $\cD$-module is not
  defined on $Y$. Then where does it live?

The idea of \textbf{quantum curves}
concerns a rather restricted situation, when the 
mirror geometry $Y$ is captured by an algebraic, 
or an analytic, curve. Typical examples  are  the mirror of 
 toric Calabi-Yau orbifolds  of three dimensions. 
 Geometry of the mirror $Y$ of a toric
 Calabi-Yau $3$-fold is encoded in 
 a complex curve known as the \emph{mirror curve}. 
 Another situation is enumeration 
 problems of various 
Hurwitz-type coverings of $\bP^1$, 
and also many decorated
graphs on surfaces. For these examples, 
although there  are no ``space'' $X$, the mirror
geometry exists, and is indeed a curve. 
These mirror curves are special cases of 
more general notion of \textbf{spectral curves}. 
Besides mirror symmetry, spectral curves appear
in  theory of integrable systems, random matrix theory, 
topological recursion,
and Hitchin theory of Higgs bundles. A spectral curve $\S$
 has two
 common features. 
 The first one is that it
is a Lagrangian subvariety of a holomorphic 
symplectic surface.
The other 
  is the existence of
    a projection $\pi:\S\lrar C$ to another
     curve  $C$, called a \emph{base curve}.
The quantum curve  is a $\cD$-module 
on the base curve $C$ such that its
\emph{semi-classical limit} is the spectral curve 
realized in the cotangent bundle $\S\subset T^*C$.

From the analogy of 2D TQFT and CohFT, we
note that a spectral curve is already a
quantized object, since it corresponds to 
quantum cohomology. The even part
$H^{even}(X,\bC)$, which is a commutative
Frobenius algebra, is not the one that corresponds
to a spectral curve. In this sense, 
CohFT is a result of two quantizations: the first one
from classical cohomology to a big quantum 
cohomology through $n$-point Gromov-Witten
invariants of genus $0$; and the second quantization
is the passage from genus $0$ to all genera.

\begin{rem}
 We note that
quantum cohomology itself is a Frobenius algebra,
though it is not finite dimensional, because it
requires the introduction of Novikov ring.
The even degree part forms
a commutative Frobenius algebra, yet it does
not correspond to a 2D TQFT in the way 
we presented in Part 1. 
\end{rem}

Now let us turn to the topic of Part 2. 
The prototype of quantization is 
a Schr\"odinger equation. 
Consider a harmonic oscillator 
 of  mass $1$, energy $E$, and the
 spring constant  $1/4$
 in one dimension. 
 It has a geometric 
 description as an elliptical motion of a constant
 angular momentum in the phase space, or the cotangent
 bundle of the real axis. Here, the spectral curve is 
 an ellipse
 $$
 \frac{1}{4}x^2+y^2 = E
 $$
  in 
 a real symplectic plane. The quantization of this 
 spectral curve is the
 quantization of the harmonic oscillator, 
 which is a second order
 stationary  Schr\"odinger equation in one variable:
 \be
 \label{ho}
\left(- \hbar^2 \frac{d^2}{dx^2}+
\frac{1}{4}x^2 -E\right)
\psi(x, \hbar) = 0.
 \ee

The quantization we discuss in Part 2 is in complete
parallelism to quantization of harmonic oscillator. 
As a holomorphic symplectic surface, we use
 $(\bC^2,dx\wedge dy)$.
A plane quadric
\be
\label{hyperbola}
\frac{1}{4}x^2-y^2=1
\ee
is an example of a spectral curve. 
In complex coordinates, 
we identify $y=d/dx$, ignoring the imaginary unit.
An example of quantization of \eqref{hyperbola}
is a  Schr\"odinger equation
 \be
 \label{Weber}
 \left(\left(\hbar\frac{d}{dx}\right)^2+1-\half \hbar
 -\frac{1}{4}x^2\right)
 \psi(x, \hbar) = 0,
 \ee
 which is essentially the same as quantum harmonic
 oscillator equation \eqref{ho}, and 
 is known as the Hermite-Weber equation.
 Its solutions are all well studied.
  \begin{quest}
Why do we care this well-known classical 
differential equation?
\end{quest}

\noindent
A surprising answer 
\cite{OM2,OM4, DMSS,MS} to this
question is that 
we find the intersection numbers of 
$\Mbar_{g,n}$ through the 
asymptotic expansion!
First we apply a gauge transformation
\be
\label{Catalan}
\begin{aligned}
&e^{-\frac{1}{4\hbar}x^2}
 \left(\left(\hbar\frac{d}{dx}\right)^2+1-\half \hbar
 -\frac{1}{4}x^2\right)
 e^{\frac{1}{4\hbar}x^2} Z(x, \hbar) 
 \\
 &=
 \left(\hbar \frac{d^2}{dx^2}+
 \hbar x\frac{d}{dx} +1\right)Z(x,\hbar)=0,
\end{aligned}
\ee
where 
$Z(x,\hbar) = e^{-\frac{1}{4\hbar}x^2}\psi(x,\hbar)$.
Recall the integer valued 
function  $C_{g,n}(\mu_1,\dots,\mu_n)$
of \eqref{Cgn},
 and 
define their generating functions
by
\be
\label{F}
F_{g,n}(x_1,\dots,x_n)
:= \sum_{\mu_1,\dots,\mu_n>0}\frac{C_{g,n}(\mu_1,\dots,\mu_n)}
{\mu_1\cdots\mu_n}x_1^{-\mu_1}\cdots 
x_n^{-\mu_n}.
\ee
It is discovered in \cite{DMSS} that
the derivatives $W_{g,n}=d_1\cdots d_n F_{g,n}$ 
satisfy the topological recursion based on 
the spectral curve \eqref{z}, which is the
semi-classical limit of \eqref{Catalan}.
With an appropriate adjustment for
$(g,n) = (0,1)$ and $(0,2)$, we have the following
all-order WKB expansion formula
\cite{OM4, DMSS,MS}:
\be
\label{WKB}
Z(x,\hbar) 
= \exp\left( \sum_{g,n} \frac{1}{n!}\hbar^{2g-2+n}
F_{g,n}(x,\dots,x)\right).
\ee
We find  (\cite{DMSS})
 that if we change the coordinate from $x$ to $t$ by
\be\label{xt}
x = x(t) = \frac{t+1}{t-1}+\frac{t-1}{t+1},
\ee
then $F_{g,n}\big(x(t_1),x(t_2),\dots,x(t_n)\big)
$ is a Laurent polynomial for each $(g,n)$
with $2g-2+n>0$. The coordinate change \eqref{xt}
is identified in \cite{OM2} as a \textbf{normalization} 
 of the singular curve \eqref{z}
in the Hirzebruch surface $\bP\big(K_{\bP^1}\dsum
\cO_{\bP^1}\big)$ by a sequence of blow-ups.
The highest degree part of this Laurent polynomial
is a homogeneous polynomial of degree
$6g-6+3n$
\be
\label{topdegree}
F^{\text{highest}}_{g,n}(
t_1,\dots,t_n)
=
\frac{(-1)^n}{2^{2g-2+n}}
\sum_{\substack{d_1+\dots+d_n\\
=3g-3+n}}\la \tau_{d_1}\cdots \tau_{d_n}\ra_{g,n}
\prod_{i=1}^n \left(
|2d_i-1|!!\left(\frac{t_i}{2}\right)^{2d_i+1}
\right),
\ee
where the coefficients
\be
\label{intersection}
\la \tau_{d_1}\cdots \tau_{d_n}\ra_{g,n} =
\int_{\Mbar_{g,n}}
\psi_1 ^{d_1}\cdots \psi_{n}^{d_n}
\ee
are  cotangent class intersection 
numbers on the moduli space 
$\Mbar_{g,n}$.  
Topological recursion is a mechanism to 
calculate all $F_{g,n}(x_1,\dots,x_n)$ from
the single equation \eqref{Weber}, 
or equivalently, \eqref{Catalan}.
Thus the quantum curve \eqref{Weber}
has the information of all intersection numbers
\eqref{intersection}. These are the topics 
discussed in our previous lectures \cite{OM4}.

Although the following topic
 is not what we deal with in this article,
 for the moment let us consider a symplectic surface
$(\bC^*\times \bC^*,d\log x\wedge d\log y)$. 
As a spectral curve, we use the zero locus of  
the \textbf{A-polynomial} $A_K(x,y)$ of a knot defined
in  \cite{CCGLS}.
For a given knot $K\subset S^3$, 
the $SL_2(\bC)$-character variety
\be
\label{cv}
\Hom\big(\pi_1(S^3\setminus K),
SL_2(\bC)\big)\big/\!\!\big/
SL_2(\bC)
\ee
of the fundamental group of the knot complement
determines an algebraic curve in $(\bC^*)^2$
defined by $A_K(x,y)\in \bZ[x,y]$. 
 Here, $(\bC^*)^2$, or to be more precise,
$(\bC^*)^2/(\bZ/2\bZ)$, is the $SL_2(\bC)$-character
variety of the fundamental group of the 
torus $T^2 = S^1\times S^1$, which is the boundary
of the knot complement.

Now consider a function
$f(q,n)$ in $2$ variables
$(q,n)\in \bC^*\times \bZ_+$, and define operators
\be
\label{xy operation}
\begin{cases}
(\widehat{x} f)(q,n) := q^n f(q,n)\\
(\widehat{y}f)(q,n) := f(q, n+1),
\end{cases}
\ee
following \cite{FGL, Gar}.
These operators satisfy the commutation relation
$$
\widehat{x}\cdot\widehat{y} 
= q\widehat{y}\cdot \widehat{x}.
$$
The procedure of changing $x\longmapsto
\widehat{x}$ and $y\longmapsto
\widehat{y}$ is the \textbf{Weyl quantization}. 
Garoufalidis \cite{Gar} conjectures that there exists a
\emph{quantization} of the A-polynomial 
 such that
\be
\label{AJ}
\widehat{A}_K(\widehat{x},\widehat{y};q)
J_K(q,n) = 0,
\ee
where $J_K(q,n)$ is the colored Jones polynomial
of the knot $K$ indexed by the dimension 
$n$ of the irreducible representation 
of $SL_2(\bC)$. Here, the quantization means
that the operator $\widehat{A}_K(\widehat{x},\widehat{y};q)$ recovers the A-poynomial
by the restriction
$$
\widehat{A}_K({x},{y};1) = A_K(x,y).
$$
This relations is  
the semi-classical limit,
which provides the initial condition of the 
WKB analysis.

A geometric definition of a quantum curve
that arises as the quantization of a Hitchin spectral
curve  is  developed in
\cite{OM5}, based on the 
work of \cite{DFKMMN} that solves
a conjecture of Gaiotto \cite{Gai, GMN}.
The WKB analysis of the quantum curve
\cite{OM1,OM2,OM4,IMS,IS} is 
performed by applying the topological recursion of
\cite{EO2007}. 
The Hermite-Weber
 differential equation is an example of
this geometric theory.
Although there have been many speculations 
\cite{GS} of
the applicability of the topological recursion
to low-dimensional topology,
still there is no counterpart of the Hitchin
type geometric 
theory for the case of the
quantization of  A-polynomials. 
The appearance of the \emph{modularity} 
in this context 
\cite{DG2,KMar,Z}
is a tantalizing phenomenon, 
on which there has been a great advancement.

In the following sections, we unfold a different 
story of quantum curves. In geometry, 
there is a process parallel to the passage from 
a spectral curve \eqref{z} to a quantum 
curve \eqref{Catalan}. This process is 
a \emph{journey} from the moduli space of 
Hitchin spectral curves to the moduli space of
opers \cite{O-Paris, OM5}.
The quantization parameter, the Planck constant
$\hbar$ of \eqref{Catalan}, acquires a geometric
meaning in this process. We begin the story with
finding a coordinate independent 
description of global differential 
equations of order $2$ 
on a compact Riemann surface.


\section{Projective structures,  opers,
and  Higgs bundles}

In \cite{OM1}, the authors have
given a definition of partial differential
equation version of topological recursion
for Hitchin spectral curves. When the
spectral curve is a double sheeted covering
of the base curve, we have shown that
this PDE topological recursion 
produces a quantum curve of the Hitchin spectral 
curve through WKB analysis. 
The mechanism is explained in detail
in \cite{OM2,OM4}. 

WKB analysis is certainly one way to describe
quantization. Yet the passage from spectral curves
to their quantization is purely geometric. 
This point of view is adopted in \cite{OM5}, based on 
our work \cite{DFKMMN}
on a conjecture of Gaiotto \cite{Gai}.
Our statement is that the quantization process is 
 a biholomorphic map from the moduli
space of Hitchin spectral curves to the moduli space
of \textbf{opers} \cite{BD}. 
In this section, we introduce the notion of opers, 
and construct the biholomorphic map mentioned above.
In this passage, we give a geometric interpretation 
of the Planck \emph{constant} $\hbar$
as a deformation parameter of vector bundles and
connections. 
 For  simplicity of presentation, 
we restrict our attention to $SL_2(\bC)$-opers.
We refer to \cite{DFKMMN,OM5}
 for more general cases.

To deal with linear differential equation of 
order higher then or equal to $2$ globally 
on a compact Riemann surface $C$, we need a
\emph{projective coordinate system}. 
If $\o$ is a global holomorphic or meromorphic
$1$-form on $C$, then 
\be\label{d+o}
(d+\o)f=0
\ee
is a first order linear differential equation 
that makes sense 
globally  on $C$. This is because
$d+\o$ is a homomorphism from $\cO_C$ to 
$K_C$, allowing singularities if necessary. Here,
$K_C$ is the sheaf of holomorphic $1$-forms
on $C$. Of course existence of a non-trivial global
solution of \eqref{d+o} is a different matter, because
it is equivalent to $\o= -d\log f$. 

Suppose we have a second order differential equation
\be\label{2nd}
\left(\frac{d^2}{dz^2} -q(z)\right)f(z) = 0
\ee
locally on $C$ with a local coordinate $z$. 
Clearly $\frac{d^2}{dz^2}$ is not globally defined, 
in contrast to the exterior differentiation $d$ in 
\eqref{d+o}.
What is the requirement for \eqref{2nd}
to make sense coordinate free, then?
Let us find an answer by imposing
\be\label{z=w}
dz^2 \left[\left(
\frac{d}{dz}\right)^2-q(z)\right]f(z) = 0
\Longleftrightarrow 
dw^2 \left[\left(
\frac{d}{dw}\right)^2-q(w)\right]f(w) = 0.
\ee
We wish the
 shape of the equation to be analogous to \eqref{d+o}.
Since $\o$ is a $1$-form, we impose
that $q\in H^0(C,K_C^{\tensor 2})$ is a global
quadratic differential on $C$. Under a coordinate 
change $w = w(z)$, $q$
  satisfies $q(z)dz^2 = q(w)dw^2$. 
  
  How should we think about a solution $f$?
  For \eqref{2nd} to have a coordinate free meeting,
  we need to identify  the line bundle $L$
  on $C$ to which  $f$  belongs. Let us
  denote by $e^{-g(w(z))}$ the transition
  function of $L$ with 
  respect to the coordinate patch $w = w(z)$.
  A function $g(w) = g\big(w(z)\big)$
   will be determined later.
A solution $f$ satisfies the coordinate condition
  \be\label{f}
  e^{-g\big(w(z)\big)}f\big(w(z)\big) = f(z).
  \ee
  Then 
  \begin{align*}
0&=dz^2 \cdot e^{g\big(w(z)\big)}\left[\left(
\frac{d}{dz}\right)^2-q(z)^2\right]f(z)
\\
&=
dz^2 \cdot e^{g\big(w(z)\big)}\left[\left(
\frac{d}{dz}\right)^2-q(z)^2\right]
e^{-g\big(w(z)\big)}f\big(w(z)\big)
\\
&=
dz^2\cdot\left(e^{g\big(w(z)\big)}
\frac{d}{dz}
e^{-g\big(w(z)\big)}\right)^2
f\big(w(z)\big)
-dw^2 q(w)f(w)
\\
&=
dz^2\cdot \left(\frac{d}{dz}-g_w(w)w'
\right)^2 
f\big(w(z)\big)
-dw^2 q(w)f(w)
\\
&=
dz^2\cdot \left[\left(\frac{d}{dz}\right)^2-
 2g_w(w)w'\frac{d}{dz}
-\left(\left(g_w(w)w'\right)'-
 \left(g_w(w)w'\right)^2\right)\right]
f\big(w(z)\big)
-dw^2 q(w)f(w)
\\
&=
dz^2\cdot \left[\left(f_w(w)w'\right)'-2g_w(w)w'
f_w(w)w'-\left(\left(g_w(w)w'\right)'-
 \left(g_w(w)w'\right)^2\right)f(w)\right]
 \\
 &\hskip1in
-dw^2 q(w)f(w)
\\
&=
f_{ww}(w)\left(\frac{dw}{dz}\right)^2dz^2
+f_w(w)w''dz^2-2g_w(w)f_w(w)dw^2
\\
&\hskip1in
-\left(\left(g_w(w)w'\right)'-
 \left(g_w(w)w'\right)^2\right)f(w)dz^2
-dw^2 q(w)f(w)
\\
&=dw^2\cdot \left[\left(\frac{d}{dw}\right)^2-q(w)
\right]f(w)
+f_w(w)\left(w''-2g_w(w)(w')^2\right)dz^2
\\
&\hskip1in
-\left(\left(g_w(w)w'\right)'-
 \left(g_w(w)w'\right)^2\right)f(w)dz^2,
\end{align*}
where $w' = dw/dz$. Therefore, for \eqref{z=w}
to hold, we need
\begin{align}
\label{w equation}
w''-2g_w(w)(w')^2&=0
\\
\label{pre schwarz}
\left(g_w(w)w'\right)'-
 \left(g_w(w)w'\right)^2&=0.
 \end{align}
 From \eqref{w equation} we find
 \be\label{g}
 g_w(w)w' =\half\; \frac{w''}{w'}.
 \ee
 Substitution of \eqref{g} in   \eqref{pre schwarz}
 yields
 \be\label{schwarz}
 s_z(w):=\left(\frac{w''(z)}{w'(z)}\right)'-\frac{1}{2}\left(
\frac{w''(z)}{w'(z)}\right)^2 = 0.
 \ee
 We thus encounter the \textbf{Schwarzian derivative}
 $s_z(w)$.
 Since \eqref{g} is equivalent to 
 $$
 g(w)' = \half \big(\log(w')\big)',
 $$
 we obtain $g(w) = \half \log w'$. Here, the constant of 
 integration is $0$ because $g(z)=0$ when $w=z$. 
Then \eqref{f} becomes
$$
f(z) = e^{-\half \log w'} f(w)
\Longleftrightarrow
f(z) = \sqrt{\frac{dz}{dw}}f(w),
$$
which identifies the line bundle to which $f$ belongs: 
$
f(z)\in K_C^{-\half}.
$
We conclude that the coordinate change $w=w(z)$
should satisfy  the vanishing of the Schwarzian derivative 
$s_z(w) \equiv 0$, and the solution $f(z)$ should 
be considered as a
(multivalued) section of the inverse half-canonical 
$K_C^{-\half}$. The vanishing of the 
Schwarzian derivative dictates us to use 
a complex projective coordinate system of $C$.

A holomorphic connection in a vector bundle $E$
on $C$ is a $\bC$-linear map
$
\nabla:E\lrar  K_C \tensor E
$
satisfying the Leibniz condition
$
\nabla(fs) = f\nabla (s) + df\tensor s
$
for $f\in \cO_C$ and $s\in E$. 
Since $C$ is a complex curve, every connection on $C$
is automatically flat. Therefore, $\nabla$ gives rise to 
a holonomy representation 
\be\label{holonomy}
\rho:\pi_1(C)\lrar G
\ee
of the fundamental group $\pi_1(C)$ of the curve $C$
into the structure  group $G$ of the vector bundle $E$. 
A flat connection  $\nabla$ is \textbf{irreducible}
if the image of the holonomy representation 
\eqref{holonomy} is Zariski dense in the complex algebraic
group $G$. In our case, since $G = SL_2(\bC)$,
this requirement is equivalent that $Im(\rho)$ contains
two non-commuting elements of $G$. 
The moduli space 
$\cM_{\deR}$ of irreducible holomorphic
connections $(E,\nabla)$ in a $G$-bundle
$E$ has been  constructed (see \cite{S}).

\begin{Def}[$SL_2(\bC)$-opers]
\label{def:oper}
Consider a point $(E,\nabla)\in \cM_{\deR}$
consisting of 
an irreducible holomorphic $SL_2(\bC)$-connection
$
\nabla:E\lrar E\tensor K_C
$
acting on a vector bundle $E$.
It is an
$SL_2(\bC)$\textbf{-oper} if 
there is a line subbundle $F\subset E$ such that
the connection induces an $\cO_C$-module
isomorphism
\be\label{grading}
\overline{\nabla}:
F\overset{\sim}{\lrar}
\left(E/F\right)\tensor K_C.
\ee
\end{Def}

The notion of oper is a generalization of projective
structures on a Riemann surface. 
Every compact
Riemann surface $C$ amsits a projective structure
subordinating the given complex structure
(see \cite{Gun}).
For our purpose of quantization of Hitchin spectral curves
associated with \emph{holomorphic} Higgs bundles,
let us assume that $g(C)\ge 2$ in what follows.
When we allow singularities, we can relax this condition
and deal with genus $0$ and $1$ cases. 
A \emph{complex projective coordinate system}
is a coordinate neighborhood covering 
$$
C = \bigcup_\a U_\a
$$
with a local coordinate $z_\a$ of $U_\a$
such that for every 
$U_\a \cap U_\b$,
we have a fractional linear transformation
\be
\label{Mobius}
z_\a = \frac{a_{\a\b}z_\b + b_{\a\b}}
{c_{\a\b}z_\b + d_{\a\b}}, 
\qquad f_{\a\b}:=
\begin{bmatrix}
a_{\a\b}&b_{\a\b}\\
c_{\a\b}&d_{\a\b}
\end{bmatrix}
\in SL_2(\bC),
\ee
satisfying a cocycle condition $[f_{\a\b}][f_{\b\gam}]=
[f_{\a\gam}]$. 
Here, $[f_{\a\b}]$ is 
the class of $f_{\a\b}$ in the projection
\be\label{psl2}
0\lrar \bZ/2\bZ \lrar SL_2(\bC)\lrar PSL_2(\bC)\lrar 0,
\ee
which determines the fractional linear transformation. 
A choice of $\pm$ on each $U_\a\cap U_\b$ 
 is an element of
$
H^1(C,\bZ/2\bZ) = (\bZ/2\bZ)^{2g},
$
indicating that there are $2^{2g}$ choices of
a lift. We make this 
choice once and for all,
and consider $f_{\a\b}$ an $SL_2(\bC)$-valued
$1$-cocycle. 
Since 
$$
dz_\a = \frac{1}{(c_{\a\b}z_\b + d_{\a\b})^2}
\; dz_\b,
$$
 a transition function for 
$K_C$  is given by
the cocycle 
$
\left\{(c_{\a\b}z_\b + d_{\a\b})^2\right\}$
on each  $U_\a\cap U_\b$.
A \emph{theta characteristic} (or a \emph{spin structure})
 $K_C^\half$ is the line bundle defined by the 
 $1$-cocycle
\be
\label{xiab}
\xi_{\a\b} =  c_{\a\b}z_\b + d_{\a\b}.
\ee
Here again, we have $2^{2g}$ choices
$\pm \xi_{\a\b}$ for a transition function
of $K_C^\half$. Since we have
already made a choice of the sign for $f_{\a\b}$,
we have a consistent choice in \eqref{xiab}, 
as explained below.
Thus we see that the choice of the lift
\eqref{psl2} is determined by $K_C^\half$. 
From \eqref{xiab}, we obtain
\be\label{zeta''}
\partial^2_\b \xi_{\a\b} = 0.
\ee
This  property plays an essential role
  in our construction of global 
connections on $C$. First we show 
 that actually \eqref{zeta''}
implies \eqref{Mobius}. 

\begin{prop}[A condition for  projective coordinate]
\label{proj condition}
A coordinate system of $C$
 with which the second derivative
of the transition function of $K_C^\half$ vanishes
is a projective coordinate system.
\end{prop}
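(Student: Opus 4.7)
\medskip

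The plan is to translate the hypothesis $\partial^2_\beta \xi_{\alpha\beta}=0$ into the vanishing of the classical Schwarzian derivative of the coordinate change $z_\alpha=z_\alpha(z_\beta)$, and then invoke the fact that maps with vanishing Schwarzian are Möbius transformations. Fix two overlapping charts $(U_\alpha,z_\alpha)$ and $(U_\beta,z_\beta)$. Because the transition cocycle for $K_C$ in the coordinate $z_\beta$ is $dz_\beta/dz_\alpha=(z_\alpha')^{-1}$, where $z_\alpha':=dz_\alpha/dz_\beta$, the transition function of the theta characteristic $K_C^{\half}$ is
\[
\xi_{\alpha\beta}=(z_\alpha')^{-\half},
\]
once a choice of sign is fixed (this is exactly the choice already pinned down by the paragraph preceding the proposition). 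In the projective case one checks immediately $(z_\alpha')^{-\half}=c_{\alpha\beta}z_\beta+d_{\alpha\beta}$, consistent with \eqref{xiab}.

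Now I would compute $\partial^2_\beta\xi_{\alpha\beta}$ directly. Taking the logarithm of $\xi_{\alpha\beta}^2=(z_\alpha')^{-1}$ and differentiating gives $\xi'/\xi=-\half\,z_\alpha''/z_\alpha'$, and differentiating once more yields
\[
\frac{\xi''}{\xi}=\left(\frac{\xi'}{\xi}\right)'+\left(\frac{\xi'}{\xi}\right)^2
=-\half\left(\frac{z_\alpha''}{z_\alpha'}\right)'+\frac{1}{4}\left(\frac{z_\alpha''}{z_\alpha'}\right)^2.
\]
Therefore the condition $\xi''_{\alpha\beta}=0$ is equivalent to
\[
\left(\frac{z_\alpha''}{z_\alpha'}\right)'-\half\left(\frac{z_\alpha''}{z_\alpha'}\right)^2=0,
\]
which is precisely the vanishing of the Schwarzian derivative $s_{z_\beta}(z_\alpha)$ from \eqref{schwarz}.

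It is classical (and easy to see by integrating the ODE $s_{z_\beta}(z_\alpha)=0$ twice, first obtaining $z_\alpha''/z_\alpha'=-2c/(cz_\beta+d)$ for constants $c,d$ depending on the overlap, then $z_\alpha'=1/(cz_\beta+d)^2$, and finally $z_\alpha=(az_\beta+b)/(cz_\beta+d)$) that the vanishing of the Schwarzian on the connected overlap forces $z_\alpha$ to be a fractional linear function of $z_\beta$. The residual scaling in the matrix can be normalized so that $ad-bc=1$; the two-fold ambiguity in this normalization is exactly the $\pm$ choice already made in passing from $\xi_{\alpha\beta}$ to $f_{\alpha\beta}$, so we obtain a genuine $SL_2(\bC)$ cocycle $f_{\alpha\beta}$. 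The cocycle condition $[f_{\alpha\beta}][f_{\beta\gamma}]=[f_{\alpha\gamma}]$ in $PSL_2(\bC)$ is automatic, because $\{z_\alpha\}$ is already a coordinate atlas on $C$.

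The computation is essentially routine; the one point that needs care is the sign/normalization step, i.e.\ ensuring that the lift to $SL_2(\bC)$ obtained from the solved ODE is compatible across triple intersections so that $\{f_{\alpha\beta}\}$ is a genuine cocycle and not merely a $PSL_2(\bC)$-cocycle. This is handled by invoking the prior choice of spin structure $K_C^{\half}$ (which by \eqref{xiab} and the discussion after it specifies the global $\bZ/2\bZ$-lift) rather than by making a fresh choice.
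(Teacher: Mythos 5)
Your argument is correct, and in substance it is the paper's proof with one extra loop. The paper uses the hypothesis \eqref{zeta''} directly: it says $\xi_{\a\b}=c_{\a\b}z_\b+d_{\a\b}$ is affine, and since $\xi_{\a\b}^2=dz_\b/dz_\a$, a single integration of $dz_\a/dz_\b=(c_{\a\b}z_\b+d_{\a\b})^{-2}$ gives \eqref{int}, a fractional linear map whose matrix automatically has determinant $1$ (the integration constant $m_{\a\b}$ drops out of the determinant). You instead re-express $\partial_\b^2\xi_{\a\b}=0$ as the vanishing of the Schwarzian $s_{z_\b}(z_\a)$ of \eqref{schwarz} and then integrate that second-order equation twice; this is equivalent --- your first integration merely recovers the affineness of $(z_\a')^{-\half}$ that the hypothesis already hands you --- but it has the merit of making explicit the link between the condition of the proposition and the coordinate-free condition \eqref{schwarz} derived earlier for second-order operators. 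Your closing remarks are also right: for the proposition as stated only the $PSL_2(\bC)$ cocycle condition in \eqref{Mobius} is needed, and that is automatic for a coordinate atlas, while the consistent $SL_2(\bC)$ lift is fixed by the already-chosen theta characteristic \eqref{xiab}, exactly as in the paper.
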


\begin{proof} The condition \eqref{zeta''}
means that the transition function
of  $K_C^\half$  is a linear polynomial
$c_{\a\b}z_\b + d_{\a\b}$
satisfying the cocycle condition. Therefore, 
$$
\frac{dz_\a}{dz_\b} = \frac{1}{(c_{\a\b}z_\b+d_{\a\b})^2}.
$$
Solving this differential equation, we obtain
\be\label{int}
z_a = \frac{m_{\a\b}(c_{\a\b}z_\b+d_{\a\b})-1/c_{\a\b}}
{c_{\a\b}z_\b+d_{\a\b}}
\ee
with a  constant of integration $m_{\a\b}$. In this way
we find an element of $SL_2(\bC)$ on each
$U_\a\cap U_\b$. The cocycle condition makes
\eqref{int} 
 exactly
\eqref{Mobius}. 
\end{proof}

The transition function $f_{\a\b}$ defines a
rank $2$ vector bundle $E$ on $C$ whose
structure group is $SL_2(\bC)$. 
Since $f_{\a\b}$
 is a \emph{constant} element of $SL_2(\bC)$, 
 the notion of locally constant sections of $E$
  makes sense independent of the coordinate chart.
  Thus defining $\nab_\a = d$ on $U_\a$ for each $U_\a$ 
  determines a
  global connection $\nabla$ in $E$. 
Suppose we have a projective coordinate system
on $C$.  Let  $E$ be the vector bundle we 
have just constructed,
and $\bP(E)$ its projectivization, 
i.e.,  the  $\bP^1$ bundle associated with $E$. 
Noticing that $\Aut(\bP^1) = PSL_2(\bC)$, the local
coordinate system $\{z_\a\}$ of \eqref{Mobius} is a
global section of $\bP(E)$. Indeed, this section
defines a map
\be\label{za}
z_\a:U_\a\lrar \bP^1,
\ee
which induces the projective structure of $\bP^1$ into 
$U_\a$ via pull back. 
The map \eqref{za} is not a constant, because
its derivative $dz_\a$  never vanishes on 
the intersection $U_\a\cap U_\b$.
A global section of $\bP(E)$ corresponds to a 
line subbundle $F$ of $E$, 
 such that $E$ is realized as an extension
$
0\lrar F\lrar E\lrar E/F\lrar 0.
$
The equality
$$
\begin{bmatrix}
z_a\\1
\end{bmatrix}
= \frac{1}{c_{\a\b}z_\b+d_{\a\b}}
\begin{bmatrix}
a_{\a\b}&b_{\a\b}\\
c_{\a\b}&d_{\a\b}
\end{bmatrix}
\begin{bmatrix}
z_\b\\1
\end{bmatrix}
$$
shows that $\begin{bmatrix}
z_a\\1
\end{bmatrix}$ defines a global section of the
vector bundle $K_C^{-\half} \tensor E$, and that
the projectivization image of this section is the 
 global section $\{z_\a\}$ of
$\bP(E)$ corresponding to $F$. Here, the choice
of the theta characteristic is consistently made so that the 
$\pm$ ambiguity of \eqref{xiab}
and the one in the  lift of the 
fractional linear transformation to $f_{\a\b}\in 
SL_2(\bC)$ cancel. 
 Since this section is nowhere vanishing, 
it generates a trivial subbundle
$$
\cO_C=  K_C^{-\half}\tensor F\subset 
 K_C^{-\half} \tensor E.
 $$
Therefore,  
$F= K_C^\half$. Note that $\det E = \cO_C$, 
hence $E/F = K_C^{-\half}$, 
and $E$ is  an extension
\be
\label{ext}
0\lrar K_C^\half\lrar E\lrar K_C^{-\half}\lrar 0,
\ee
 determining  an element of 
$
\Ext^1\big(K_C^{-\half}, K_C^{\half}\big) 
\isom H^1(C,K_C) \isom \bC.
$

There is a more straightforward way to 
obtain \eqref{ext}.

\begin{thm}[Projective coordinate systems and opers]
Every projective coordinate system 
\eqref{Mobius} determines   an oper
$(E,\nab)$ of Definition~\ref{def:oper}
with $F = K_C^{\half}$. 
\end{thm}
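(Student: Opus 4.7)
The plan is to verify both conditions of Definition~\ref{def:oper} by direct computation in local coordinates. The bundle $E$ and the flat connection $\nabla$ have already been constructed: $E$ is defined by the cocycle $\{f_{\alpha\beta}\}\subset SL_2(\bC)$ of \eqref{Mobius}, and since each $f_{\alpha\beta}$ is locally constant the rule $\nabla_\alpha:=d$ on each chart glues to a global holomorphic connection. What remains is (i) to realize $F=K_C^\half$ as a subbundle of $E$, (ii) to verify that $\bar\nabla:F\to (E/F)\tensor K_C$ is an $\cO_C$-module isomorphism, and (iii) to check irreducibility of the connection.

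For (i), I would use the local section $v_\alpha:=[z_\alpha,1]^T$ of $E|_{U_\alpha}$ already singled out in the preceding discussion. The identity
\[
f_{\alpha\beta}\, v_\beta = \begin{bmatrix}a_{\alpha\beta}z_\beta+b_{\alpha\beta}\\ c_{\alpha\beta}z_\beta+d_{\alpha\beta}\end{bmatrix} = (c_{\alpha\beta}z_\beta+d_{\alpha\beta})\, v_\alpha = \xi_{\alpha\beta}\, v_\alpha
\]
shows that $\{v_\alpha\}$ is a well-defined nowhere-vanishing section of $K_C^{-\half}\tensor E$, realizing the embedding $\iota:F=K_C^\half\hookrightarrow E$. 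The quotient is identified by $\bar e_1^{(\beta)}=a_{\alpha\beta}\bar e_1^{(\alpha)}+c_{\alpha\beta}\bar e_2^{(\alpha)}=(a_{\alpha\beta}-c_{\alpha\beta}z_\alpha)\bar e_1^{(\alpha)}=\xi_{\alpha\beta}^{-1}\bar e_1^{(\alpha)}$, using the Möbius relation together with $\det f_{\alpha\beta}=1$; hence $E/F\isom K_C^{-\half}$.

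For (ii), in the frame $(e_1^{(\alpha)},e_2^{(\alpha)})$ on $U_\alpha$ in which $\nabla=d$, one has $v_\alpha=z_\alpha e_1^{(\alpha)}+e_2^{(\alpha)}$, and therefore
\[
\nabla v_\alpha = dz_\alpha\tensor e_1^{(\alpha)}.
\]
Since $e_2^{(\alpha)}\equiv -z_\alpha e_1^{(\alpha)}\pmod F$, the class $\bar e_1^{(\alpha)}$ is a local generator of $E/F$, so
\[
\bar\nabla\bigl(\iota(v_\alpha)\bigr) = dz_\alpha\tensor\bar e_1^{(\alpha)}
\]
is nowhere vanishing as a section of $(E/F)\tensor K_C$. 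The Leibniz correction $\nabla(fv_\alpha)-f\nabla v_\alpha=df\tensor v_\alpha$ lies in $K_C\tensor F$ and dies in the quotient, so $\bar\nabla$ is $\cO_C$-linear; taking a local generator of $F$ to a local generator of $(E/F)\tensor K_C$, it is an $\cO_C$-module isomorphism. For (iii), I would invoke the classical result of Gunning \cite{Gun} that for $g(C)\ge 2$ the monodromy of any (unramified) complex projective structure is non-elementary in $PSL_2(\bC)$; lifted through \eqref{psl2} to $SL_2(\bC)$ via the chosen spin structure it is Zariski dense, placing $(E,\nabla)\in\cM_\deR$.

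The main obstacle I anticipate is bookkeeping the two independent places where the spin structure $K_C^\half$ enters the construction: once in the sign choice that lifts $\{[f_{\alpha\beta}]\}\subset PSL_2(\bC)$ to $\{f_{\alpha\beta}\}\subset SL_2(\bC)$ via \eqref{psl2}, and once as the line bundle $F$ itself, defined by the cocycle $\{\xi_{\alpha\beta}\}$ of \eqref{xiab}. These two choices must be made coherently for the key cocycle relation $f_{\alpha\beta}v_\beta=\xi_{\alpha\beta}v_\alpha$ to hold on every overlap with consistent signs, and for the resulting subbundle to be literally $K_C^\half$ rather than one of its $2$-torsion twists. I would pin down this coherent choice at the outset, after which the remaining calculations are entirely formal and the extension \eqref{ext} is exactly the one determined by the projective structure.
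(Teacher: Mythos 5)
Your proof is correct and is essentially the paper's argument read in a different gauge: your identity $f_{\a\b}v_\b=\xi_{\a\b}v_\a$ is the column form of the paper's matrix factorization \eqref{ext matrix}, and your computation $\nabla v_\a = dz_\a\tensor e_1^{(\a)}$ in the flat frame exhibits exactly the nowhere-vanishing $(2,1)$-entry $dz_\a$ that the paper reads off from $\nab_\a = d - X_-\,dz_\a$ after conjugating to the upper-triangular cocycle $g_{\a\b}$ of \eqref{naba}, with the same Leibniz/difference-of-connections reasoning for $\cO_C$-linearity of \eqref{O-linear}. The only genuine addition is your explicit verification of irreducibility via Gunning's theorem on the monodromy of projective structures, a point the paper leaves implicit in demanding $(E,\nab)\in\cM_{\deR}$, and which is a welcome (if standard) supplement rather than a different route.
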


\begin{proof}
Let $\sig_{\a\b}=-(d/dz_\b) \xi_{\a\b} = -c_{\a\b}$, where
$\xi_{\a\b}$ is defined by \eqref{xiab}. 
Since
\be\label{ext matrix}
\begin{aligned}
g_{\a\b}:&=
\begin{bmatrix}
\xi_{\a\b}&\sig_{\a\b}
\\
&\xi_{\a\b}^{-1}
\end{bmatrix}
=
\begin{bmatrix}
c_{\a\b}z_\b+d_{\a\b}&-c_{\a\b}
\\
&(c_{\a\b}z_\b+d_{\a\b})^{-1}
\end{bmatrix}
\\
&
=\begin{bmatrix}
&1
\\
-1&z_\a
\end{bmatrix}
\begin{bmatrix}
a_{\a\b}&b_{\a\b}
\\
c_{\a\b}&d_{\a\b}
\end{bmatrix}
\begin{bmatrix}
z_\b&-1
\\
1&
\end{bmatrix},
\end{aligned}
\ee
which follows from 
$$
a_{\a\b} - c_{\a\b}z_\a = \frac{a_{\a\b}(c_{\a\b}z_\b
+d_{\a\b}) -c_{\a\b}(a_{\a\b}z_\b
+b_{\a\b})}{(c_{\a\b}z_\b
+d_{\a\b})}
=\frac{1}{(c_{\a\b}z_\b
+d_{\a\b})},
$$
we find that $f_{\a\b}$ and $g_{\a\b}$ define the 
same $SL_2(\bC)$-bundle $E$. The shape of the matrix
$g_{\a\b}$ immediately shows \eqref{ext}. 
Since the connection $\nab$ in $E$ is simply $d$ on each
$U_\a$ with respect to $f_{\a\b}$, the differential operator on $U_\a$ with respect
to the transition function $g_{\a\b}$ is given by 
\be\label{naba}
\nab_\a := 
\begin{bmatrix}
&1
\\
-1&z_\a
\end{bmatrix}
d
\begin{bmatrix}
z_\a&-1
\\
1&
\end{bmatrix}
=d-
\begin{bmatrix}
0&0\\
1&0
\end{bmatrix}
dz_\a .
\ee
Since the $(2,1)$-component of the connection 
matrix is $dz_\a$ which is nowhere vanishing, 
\be\label{O-linear}
 F= K_C^\half \overset{\nab}{\lrar}
 E\tensor K_C\lrar (E/F)\tensor K_C \isom K_C^\half
\ee
given by this component is an isomorphism. This
proves that $(E,\nab)$ is an $SL_2(\bC)$-oper. 
\end{proof}

In the final step of the proof to show $(E,\nab)$ is an oper,
we need that \eqref{O-linear} is an $\cO_C$-linear 
homomorphism. This is because we are considering 
the difference of connections $\nab$ and
$\nab|_F$ in $E$. More generally, 
suppose we have two connections $\nab_1$ and
$\nab_2$ in the same vector bundle $E$. Then 
the Leibniz condition tells us that 
$$
\nab_1(fs)-\nab_2(fs) = f\nab_1(s)-f\nab_2(s)
$$
for $f\in \cO_C$ and $s\in E$. Therefore, 
$
\nab_1-\nab_2:E\lrar E\tensor K_C
$
is  an $\cO_C$-module homomorphism. 
Although the extension class of \eqref{ext}
is parameterized by $H^1(C,K_C) = \bC$, 
the complex structure of $E$ depends
only if $\sig_{\a\b} = 0$ or not. This is because
\be\label{const change}
\begin{bmatrix}
\lam\\
&\lam^{-1}
\end{bmatrix}
\begin{bmatrix}
\xi_{\a\b}&\sig_{\a\b}
\\
&\xi_{\a\b}^{-1}
\end{bmatrix}
\begin{bmatrix}
\lam^{-1}\\
&\lam
\end{bmatrix}
=
\begin{bmatrix}
\xi_{\a\b}&\lam^2 \sig_{\a\b}
\\
&\xi_{\a\b}^{-1}
\end{bmatrix},
\ee
hence we can normalize $\sig_{\a\b} = 0$ or 
$\sig_{\a\b} = 1$. The former case gives 
the trivial extension of two line bundles
$E=K_C^\half\dsum K_C^{-\half}$. 
Since $\sig_{\a\b}=c_{\a\b} = 0$, the 
projective coordinate system \eqref{Mobius}
is actually an \emph{affine} coordinate system.
Since we are assuming $g(C)>1$, there is no
affine structure in $C$. Therefore, 
only the latter case can happen. And
the latter case gives a non-trivial extension, as we 
will show later. 

Suppose we have another projective structure in 
$C$ subordinating the same complex structure of $C$. 
Then we can adjust the $\pm$ signs of the lift of \eqref{psl2}
and the square root of \eqref{xiab} so that we obtain
the exact same holomorphic
vector bundle $E$ of \eqref{ext}. Since we are dealing with 
a different coordinate system, the only change we have
is reflected in the connection $\nab$. Thus two different
projective structures give rise to two connections in the
same vector bundle $E$. Hence this difference is
an $\cO_C$-linear homomorphism $E\lrar E\tensor K_C$
as noted above.
This consideration motivates the following.

A \textbf{Higgs bundle} of rank $r$
 \cite{H1,H2} defined on $C$ 
is a pair $(E,\phi)$ consisting of a holomorphic
vector bundle $E$ of rank $r$ 
on $C$ and an $\cO_C$-module
homomorphism
$$
\phi:E\lrar E\tensor K_C.
$$
An $SL_2(\bC)$-Higgs bundle is a pair 
$(E,\phi)$  of rank $2$
with a fixed isomorphism $\det E = \cO_C$ and
$\tr \phi = 0$. It  is \emph{stable}
if  every line subbundle 
$F\subset E$ that is invariant with respect to 
$\phi$, i.e., $\phi:F\lrar F\tensor K_C$, 
has a negative degree
$
\deg F < 0.
$
 The moduli spaces of 
stable Higgs bundles
are constructed in \cite{S}. 
We denote by $\cM_{\Dol}$
the moduli space of 
stable holomorphic $SL_2(\bC)$-Higgs bundles on $C$. 
 It is diffeomorphic to the moduli space $\cM_{\deR}$ of
 pairs $(E,\nab)$ consisting of an
 irreducible holomorphic connection in an 
 $SL_2(\bC)$-bundle
 (see \cite{Donaldson, H1, S}). 
A particular diffeomorphism 
\be\label{NAH}
\nu:\cM_{\Dol}\overset{\sim}{\lrar}\cM_{\deR}
\ee
is the  \textbf{non-Abelian Hodge correspondence},
which is explained in Section~\ref{sect:NAH}.

The total space of the line bundle $K_C$ is the 
cotangent bundle $\pi:T^*C\lrar C$ of the curve $C$. 
We denote by $\eta\in H^0(T^*C,\pi^*K_C)$
the tautological section
$$
\xymatrix{&\pi^*K_C\ar[d]&K_C\ar[d]
\\
T^*C\ar[ur]^{\eta}\ar[r]^{=}&T^*C\ar[r]^{\pi}&C,
}
$$
which is a holomorphic $1$-form on $T^*C$. 
Since $\phi$ is an $\End(E)$-valued holomorphic
$1$-form on $C$, its eigenvalues are 
$1$-forms. The set of eigenvalues
is    thus  a multivalued
 section of $K_C$,
and hence a multivalued section of $\pi:T^*C\lrar C$.
The image $\S\subset T^*C$ of this multivalued section is the 
\textbf{Hitchin spectral curve},
 which defines a ramified covering of $C$.
 The formal definition of Hitchin spectral curve $\Sigma$ 
 is that it is the divisor of zeros   in $T^*C$ of 
 the characteristic
 polynomial  
 $$
 \det(\eta - \pi^*\phi) \in \pi^*K_C^{\tensor 2}.
 $$

Hitchin fibration  \cite{H1} is a holomorphic 
fibration
\be\label{Hitchin map}
\mu_H:\cM_{\Dol} \owns (E,\phi)
\longmapsto \det(\eta - \pi^*\phi)\in  B, \qquad
B := H^0\left(C,K_C^{\tensor 2}
\right),
\ee
that defines 
an algebraically completely integrable
Hamiltonian system in $\cM_{\Dol}$. 
Hitchin notices in \cite{H1} that
the choice of a spin structure $K_C^\half$ that
 we have made allows us to construct  
 a natural section 
$\kappa: B\hookrightarrow \cM_{\Dol}$. 
Define
\be
\label{TDS}
X_- :
=\begin{bmatrix}
0&0\\
1&0
\end{bmatrix},
\qquad  X_+ := X_-^t=
\begin{bmatrix}
0&1\\
0&0
\end{bmatrix},
\qquad 
 H :=[X_+,X_-]=
\begin{bmatrix}
1&0\\
0&-1
\end{bmatrix}.
\ee
These elements generate 
the Lie algebra $\la X_+,X_-,H\ra\isom 
sl_2(\bC)$. 

\begin{lem}
\label{lem:Hitchin section}
Let
$
q
\in B=
H^0(C,K_C^{\tensor 2})
$
be an arbitrary point of the Hitchin base $B$, and
define a Higgs bundle 
$\left(E_0,\phi({q})\right)$
consisting of a vector bundle
\be
\label{E0}
E_0 := \left(K_C^\half\right)^{\tensor H}
=K_C^\half\oplus K_C^{-\half}
\ee
and a Higgs field 
\be\label{phi q}
\phi({q}): = 
X_- +{q} X_+ =
\begin{bmatrix}
0&q\\
1&0
\end{bmatrix}.
\ee
Then it  is
a stable $SL_2(\bC)$-Higgs bundle.
The Hitchin section is defined by
\be\label{Hitchin section}
\kappa: B\owns {q}
\longmapsto \left(E_0,\phi({q})\right)
\in  \cM_{\Dol},
\ee
which gives a biholomorphic map between 
$B$ and $\kappa(B)\subset \cM_{\Dol}$. 
\end{lem}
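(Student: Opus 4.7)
The plan is to verify the three required properties in turn: that $(E_0,\phi(q))$ is a well-defined $SL_2(\bC)$-Higgs bundle, that it is stable, and that $\kappa$ is a biholomorphism onto its image. The first part is a bookkeeping check on line bundle degrees and the shape of the matrix. We have $\det E_0 = K_C^\half \tensor K_C^{-\half} = \cO_C$ and $\tr\phi(q)=0$, so we only need to confirm that each matrix entry of $\phi(q)$ is a section of the appropriate line bundle after twisting by $K_C$. The off-diagonal entries have target line bundles $\cO_C$ (for the $(2,1)$-slot, giving the entry $1$) and $K_C^{\tensor 2}$ (for the $(1,2)$-slot, which houses $q$), so the Higgs field is well-defined.

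For stability, let $F\subset E_0$ be any line subbundle preserved by $\phi(q)$. The key observation is that $K_C^\half\subset E_0$ is \emph{not} preserved: applying $\phi(q)$ to a local section of the first summand produces a section of the second summand (twisted by $K_C$), since the $(2,1)$-entry is the nowhere-vanishing element $1$. Consequently, any $\phi(q)$-invariant line subbundle $F$ must project nontrivially to the quotient $E_0/K_C^\half = K_C^{-\half}$, yielding a nonzero map $F\hookrightarrow K_C^{-\half}$. This forces $\deg F\le\deg K_C^{-\half}=-(g-1)<0$, which is exactly the stability condition. Hence $\kappa(q)\in\cM_{\Dol}$.

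To prove biholomorphicity onto its image, I would exhibit an explicit holomorphic left inverse built from the Hitchin fibration. A direct computation of the characteristic polynomial of $\phi(q)$ yields
\begin{equation*}
\det\bigl(\eta-\pi^*\phi(q)\bigr) = \eta^2 - \pi^* q,
\end{equation*}
so the composite $\mu_H\circ\kappa:B\to B$ is (up to the tautological identification of characteristic polynomials of traceless $2\times 2$ matrices with quadratic differentials) the identity. In particular $\kappa$ is injective. Since the formula \eqref{Hitchin section} is manifestly linear in $q\in H^0(C,K_C^{\tensor 2})$, $\kappa$ is holomorphic; and because it possesses a holomorphic left inverse, its differential is everywhere injective. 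Thus $\kappa$ is a holomorphic embedding and a biholomorphism onto its image.

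I do not expect any serious obstacle; the subtlest point is the stability argument, where one must be careful to rule out $F=K_C^\half$ itself by exploiting that the constant entry $1$ in $\phi(q)$ is nowhere vanishing, so that $\phi(q)|_{K_C^\half}$ has no component in $K_C^\half\tensor K_C$. Everything else is either a direct degree/trace computation or follows formally from the existence of the left inverse $\mu_H$.
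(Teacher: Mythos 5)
Your proof is correct, and it takes a genuinely different (and in places more careful) route than the paper's. The well-definedness check is the same in both: the $(2,1)$-entry is the canonical section $1\in\Hom\big(K_C^\half,K_C^{-\half}\tensor K_C\big)=\cO_C$ and the $(1,2)$-entry lives in $\Hom\big(K_C^{-\half},K_C^{\half}\tensor K_C\big)=K_C^{\tensor 2}$. For stability, the paper argues that no line subbundle of $E_0$ is $\phi(q)$-invariant unless $q=0$, and then treats $q=0$ separately; you instead show that any invariant line subbundle $F$ must map nontrivially to the quotient $E_0/K_C^\half\isom K_C^{-\half}$ (because the nowhere-vanishing $1$ in the $(2,1)$-slot pushes every subsheaf of $K_C^\half$ into the other summand of $E_0\tensor K_C$), whence $\deg F\le 1-g<0$. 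Your version is more robust: when $q=\omega^2$ is the square of a holomorphic $1$-form, the eigen-line $\{(\omega s,s)\}\isom K_C^{-\half}$ is $\phi(q)$-invariant even though $q\ne 0$, so the paper's dichotomy is not literally accurate (and its $q=0$ case names $K_C^{\half}$ of positive degree where the invariant subbundle is actually $K_C^{-\half}$ of negative degree); your uniform degree bound disposes of all $q$, including $q=0$ and the square case, in one stroke. Finally, the paper's proof is silent on the biholomorphism claim, while your computation $\det\big(\eta-\pi^*\phi(q)\big)=\eta^2-\pi^*q$, giving $\mu_H\circ\kappa=\mathrm{id}_B$, supplies a holomorphic left inverse and hence injectivity, injectivity of the differential, and a homeomorphism onto the image --- exactly the missing justification. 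The only point worth making explicit is that holomorphy of $\kappa$ itself comes from the fact that $\big(E_0,\phi(q)\big)$ is a holomorphic family of stable Higgs bundles parametrized by $B$, so it induces a holomorphic map into $\cM_{\Dol}$, whose stable locus is smooth, after which your embedding argument goes through verbatim.
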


\begin{proof}
We first note that $X_-:E_0\lrar E_0\tensor K_C$ 
is a globally defined $\End_0(E_0)$-valued 
$1$-form, since it is essentially the constant  map
\be\label{si}
1: K_C^\half
\overset{=}{\lrar}
K_C^{-\half}
\tensor K_C.
\ee
Similarly,  multiplication by a quadratic differential gives
$$
{q}:K_C^{-\half}
{\lrar}
K_C^{\frac{3}{2}}
=
K_C^\half
\tensor K_C.
$$
Thus $\phi({q}):E_0\lrar E_0\tensor K_C$
is globally defined as a Higgs field in $E_0$.
The Higgs pair is  stable  because no line subbundle of 
$E_0$ is invariant under $\phi({q})$, unless
${q} = 0$. And when ${q}=0$, the invariant line subbundle
$K_C^\half$
has  degree $g-1$, which is positive since $g\ge 2$.
\end{proof}

\begin{rem}
Hitchin sections exist for the moduli space
of stable $G$-Higgs bundles for an arbitrary
simple complex algebraic group $G$. The construction
 utilizes Kostant's 
\emph{principal three-dimensional subgroup}
(TDS)
\cite{Kostant}. The use of TDS is crucial in 
our quantization, as noted in \cite{DFKMMN, OM5}.
\end{rem}

 The image $\kappa(B)$ is a holomorphic
 Lagrangian submanifold of a holomorphic
 symplectic space $\cM_{\Dol}$. The holomorphic 
 symplectic structure of $\cM_{\Dol}$
 is induced from its open dense subspace
 $T^*\cS\cU(2,C)$, where $\cS\cU(2,C)$
 is the moduli space of rank $2$ stable bundles of 
 degree $0$ on $C$. Since the codimension of
 the complement of $T^*\cS\cU(2,C)$ in $\cM_{\Dol}$
 is  $2$, the natural holomorphic symplectic form
 on the cotangent bundle automatically extends
  to $\cM_{\Dol}$.

Our first step of constructing the quantization 
of the Hitchin spectral curve $\S$ is 
to define  $\hbar$-connections on $C$ 
that are holomorphically
depending on $\hbar$.
We use a 
 one-parameter  
family $\cE$ 
of deformations of vector bundles
$$
\xymatrix{
E_\hbar \ar[d]\ar[r]&\cE\ar[d]
\\
C\times \{\hbar\}\ar[r] &C\times H^1(C,K_C),
}
$$
and a $\bC$-linear first-order differential operator
$
\hbar \nabla^\hbar:E_\hbar \lrar E_\hbar \tensor
K_C
$
depending holomorphically on 
$\hbar\in H^1(C,K_C)\isom \bC$
for $\hbar \ne 0$. Here, we identify 
the Planck constant $\hbar$ of the
quantization as a geometric parameter
$$
\hbar \in H^1(C,K_C) =
\Ext^1\left(K_C^{-\half},K_C^\half\right) 
\isom  \bC,
$$
which determines a unique extension
\be
\label{r=2 extension}
0\lrar K_C^\half \lrar E_\hbar \lrar  K_C^{-\half} \lrar 0.
\ee
This is exactly the same as \eqref{ext}. 
The extension $E_\hbar$ 
 is given by
a system of transition functions
\be
\label{r=2 trans}
E_\hbar \longleftrightarrow
\left\{
\begin{bmatrix}
\xi_{\a\b} &\hbar \sig_{\a\b}\\
0&\xi_{\a\b}^{-1}
\end{bmatrix}
\right\}
\ee
on each $U_\a\cap U_\b$. The cocycle condition 
for the transition functions translates into a condition
\be
\label{sigma relation}
\sig_{\a\gam} = \xi_{\a\b}\sig_{\b\gam}
+ \sig_{\a\b}\xi_{\b\gam}^{-1}.
\ee
The application of the exterior 
differentiation $d$ to the cocycle condition
$\xi_{\a\gam} = \xi_{\a\b}\xi_{\b\gam}$ 
for $K_C^\half$ yields
$$
\frac{d\xi_{\a\gam}}{dz_\gam}dz_\gam
= \frac{d\xi_{\a\b}}{dz_\b}dz_\b \xi_{\b\gam}
+\xi_{\a\b}
\frac{d\xi_{\b\gam}}{dz_\gam}dz_\gam.
$$
Noticing 
$
\xi_{\a\b}^2 = \frac{dz_\b}{dz_\a},
$
we find that 
\be
\label{sigma ab}
\sig_{\a\b} :=- \frac{d\xi_{\a\b}}{dz_\b}
=-\partial_\b \xi_{\a\b}
\ee
solves
\eqref{sigma relation}. The negative sign
is chosen to relate \eqref{r=2 trans} and \eqref{Mobius}.
By the same
reason as before, 
the complex structure of the vector bundle $E_\hbar$ is isomorphic to $E_1$ if $\hbar \ne 0$, and to $E_0$ of \eqref{E0} if $\hbar = 0$. 
The transition function can also be written as
\be\label{gab}
\begin{bmatrix}
\xi_{\a\b}&\hbar \sig_{\a\b}
\\
&\xi_{\a\b}^{-1}
\end{bmatrix}
=
\exp\left(
\log \xi_{\a\b} {\begin{bmatrix}
1&0\\
0&-1
\end{bmatrix}}
\right)
\exp\left(
-\hbar \partial_\b \log \xi_{\a\b}
\begin{bmatrix}
0&1
\\
0&0
\end{bmatrix}
\right).
\ee
Therefore, in the multiplicative sense, the
extension class is determined by
$\partial_\b \log \xi_{\a\b}$.

\begin{lem}
The extension class
 $ \sig_{\a\b}$ of  \eqref{sigma ab}
 defines a non-trivial extension
 \eqref{r=2 extension}.
\end{lem}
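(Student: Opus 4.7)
The plan is to argue by contradiction using Atiyah's obstruction to the existence of a holomorphic connection. If the extension class $[\sig_{\a\b}]\in \Ext^1(K_C^{-\half},K_C^\half)\isom H^1(C,K_C)$ were zero, the bundle $E_\hbar$ would split holomorphically as $K_C^\half\dsum K_C^{-\half}$, and I will produce a holomorphic connection on $E_\hbar$ while showing that no such connection can exist on the split bundle when $g(C)\ge 2$.

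First I would invoke the scaling relation \eqref{const change} with $\lam^2=\hbar^{-1}$ to see that the transition cocycle of $E_\hbar$ is conjugate, via a constant (hence holomorphic) gauge transformation, to the $\hbar=1$ cocycle. Thus $E_\hbar\isom E_1=E$ as holomorphic vector bundles for every $\hbar\neq 0$, and the holomorphic $SL_2(\bC)$-connection $\nabla$ on $E$ whose local form \eqref{naba} reads $\nabla_\a = d-X_- dz_\a$ transports to a holomorphic connection on $E_\hbar$.

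Next I would invoke Atiyah's criterion: a holomorphic vector bundle on a compact Riemann surface admits a holomorphic connection if and only if its Atiyah class vanishes, which for a direct sum of line bundles is equivalent to each summand carrying degree zero. For the split bundle $K_C^\half\dsum K_C^{-\half}$ the two summands have degrees $\pm(g-1)$, both nonzero since $g\ge 2$, so this bundle admits no holomorphic connection. Combined with the previous step, this yields a contradiction and forces $[\sig_{\a\b}]\ne 0$.

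The main point requiring care will be confirming that the connection in \eqref{naba}, originally described in the trivialization with transitions $f_{\a\b}\in SL_2(\bC)$, remains holomorphic after passing through the gauge change to the trivialization with transitions \eqref{ext matrix}; but this is automatic, since conjugation by a holomorphic cocycle preserves the class of holomorphic connections.
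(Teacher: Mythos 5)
Your argument is correct, and it is a genuinely different route from the paper's. You argue by contradiction at the rank-$2$ level: if the class of $\sig_{\a\b}$ vanished, then $E_\hbar\isom K_C^\half\dsum K_C^{-\half}$, which by the Atiyah--Weil criterion admits no holomorphic connection since its indecomposable summands have degrees $\pm(g-1)\ne 0$, while $E_1$ does carry the oper connection \eqref{naba} coming from the projective coordinate system; this is legitimate and non-circular, because the theorem producing \eqref{naba} precedes the lemma and does not use it (and your scaling step via \eqref{const change} is not even needed --- working at $\hbar=1$ suffices, since $[\hbar\sig_{\a\b}]=0$ iff $[\sig_{\a\b}]=0$ for $\hbar\ne0$). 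The paper instead argues directly at the line-bundle level: it identifies $\{\sig_{\a\b}\}$, in the multiplicative sense, with the image of the cocycle $\{\xi_{\a\b}\}$ of $K_C^\half$ under $d\log:H^1(C,\cO_C^*)\to H^1(C,K_C)$, and uses the exact sequences $0\to\bC^*\to\cO_C^*\to K_C\to 0$, etc., to conclude that vanishing of this class would make $K_C^\half$ come from $H^1(C,\bC^*)$, i.e.\ a line bundle with a holomorphic (flat) connection, forcing $c_1\big(K_C^\half\big)=0$ and contradicting $c_1\big(K_C^\half\big)=g-1>0$. Both proofs ultimately rest on the same obstruction (nonzero degree of $K_C^{\pm\half}$ precludes holomorphic connections), but they package it differently: your version is a clean application of Atiyah--Weil that needs the previously constructed rank-$2$ oper connection (hence the projective coordinate hypothesis) and shows non-splitting without computing the class; the paper's version is self-contained sheaf cohomology, independent of the choice of the oper connection, and has the added payoff of identifying $\{\sig_{\a\b}\}$ essentially with the Atiyah class, i.e.\ with $c_1\big(K_C^\half\big)$, which is exactly what the text uses immediately afterwards to fix the isomorphism $H^1(C,K_C)\isom\bC$ and give $\hbar$ its geometric meaning.
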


\begin{proof} 
The long exact sequences
of cohomologies
$$
\xymatrix{
H^1(C,\bC)\ar[r]\ar[d]& H^1(C,\cO_C)
\ar[r]\ar[d]& H^1(C,K_C) \ar[r]^{\sim}\ar[d]_{\parallel}
& H^2(C,\bC)
\\
H^1(C,\bC^*)\ar[r]\ar[d]_0 &H^1(C,\cO^*_C)
\ar[r]^{d\log}\ar[d]_{c_1} &H^1(C,K_C)
\\
H^2(C,\bZ)\ar[r]^{=}&H^2(C,\bZ)
}
$$
associated with 
 exact sequences of sheaves 
$$
\xymatrix{
&0\ar[d]&0\ar[d]
\\
0\ar[r]&\bZ\ar[d]\ar[r]^{=}&\bZ\ar[d]\ar[r]&0\ar[d]
\\
0\ar[r]&\bC\ar[d]\ar[r]&\cO_C\ar[d]\ar[r]^d&K_C\ar[d]\ar[r]
&0
\\
0\ar[r]&\bC^*\ar[d]\ar[r]&\cO_C^*\ar[d]\ar[r]^{d\log}
&K_C\ar[d]\ar[r]&0
\\
&0&0&0
}
$$
show that the class $\{\sig_{\a\b}\}$ corresponds
to the image
of $\{\xi_{ab}\}$ via the map
$$
\begin{CD}
H^1(C,\cO_C ^*)@>{d\log}>>
H^1(C,K_C).
\end{CD}
$$
If  $d\log \{\xi_{\a\b}\} = 0\in H^1(C,K_C)$, 
then it comes from a class in $H^1(C,\bC^*)$, 
which is the moduli space  of 
line bundles with holomorphic connections,
as explained in \cite{Bloch}. It leads to a 
contradiction
$$
0 = c_1\big(K_C^\half\big) = g-1> 0,
$$
because $g(C)\ge 2$. 
\end{proof}

\begin{rem}
Gukov and Su\l kowski \cite{GS}
defines an intriguing   quantizability condition for a spectral curve
in terms of the algebraic K-group  $K_2\big(\bC(\S)\big)$ of 
the function field of  spectral curve $\S$.
They relate the quantizability and 
 Bloch regulators  of \cite{Bloch}.
\end{rem}

The class $\{\sigma_{\a\b}\}$
of \eqref{sigma ab} gives a natural isomorphism
$H^1\left(C,K_C\right)\isom\bC$. We identify
the deformation parameter $\hbar\in \bC$
with the cohomology class $\{\hbar \sig_{\a\b}\}
\in H^1\left(C,K_C\right)=\bC$. 
We trivialize the line bundle $K_C^{\tensor 2}$
with respect to a coordinate chart 
$C = \bigcup_\a U_\a$, and write  ${q}\in H^0(C,K_C^{\tensor} 2)$
as $\{({q})_\a\}$ that satisfies the 
transition relation
\be
\label{q ell relation}
({q})_\a =  ({q})_\b \xi_{\a\b}^{4}.
\ee
The  transition function 
of the trivial extension $E_0$ is given by
\be
\label{xi H}
\xi_{\a\b}^H = \exp(H \log \xi_{\a\b}). 
\ee
Since $X_-:E_0\lrar E_0\tensor K_C$ is 
a globally defined Higgs filed, its local expressions
$\{X_- dz_\a\}$
with respect to a coordinate system
satisfies the transition relation
\be
\label{X- local}
X_-dz_\a = \exp(H \log \xi_{\a\b})
X_-dz_\b \exp(-H \log \xi_{\a\b})
\ee
on   every $U_\a\cap U_\b$.
The same relation holds for the Higgs field
$\phi({q})$ as well:
\be
\label{Higgs local}
\phi_\a({q})dz_\a = \exp(H \log \xi_{\a\b})
\phi_\b({q})dz_\b \exp(-H \log \xi_{\a\b}).
\ee

\begin{thm}[Construction of $SL_2(\bC)$-opers]
\label{thm:construction of opers}
On each $U_\a\cap U_\b$ define a 
transition function 
\be
\label{g hbar}
g_{\a\b}^\hbar := 
\exp(H \log \xi_{\a\b})
\exp\big(-\hbar \partial_\b \log\xi_{\a\b}
X_+\big)
=\begin{bmatrix}
\xi_{\a\b}\\
&\xi^{-1}_{\a\b}
\end{bmatrix}
\cdot
\begin{bmatrix}
1&-\hbar \partial_\b \log\xi_{\a\b}\\
&1
\end{bmatrix},
\ee
where $\partial_\b = \frac{d}{dz_\b}$, 
and $\hbar \partial_\b \log\xi_{\a\b}\in H^1(C,K_C)$.
Then
\begin{itemize}

\item The collection 
$\{g_{\a\b}^\hbar\}$ satisfies the cocycle condition
\be
\label{f cocycle}
g_{\a\b}^\hbar g_{\b\gam}^\hbar = g_{\a\gam}^\hbar,
\ee
which defines the holomorphic vector bundle bundle 
$E_\hbar$ of \eqref{r=2 extension}.

\item The local expression 
\be
\label{d+X}
\nabla_\a ^\hbar(0) :=d-\frac{1}{\hbar}X_- dz_a
\ee
on $U_\a$ for  $\hbar \ne 0$
defines a global holomorphic connection in 
$E_\hbar$, i.e., 
\be
\label{d+X gauge equation}
d-\frac{1}{\hbar}X_-dz_\a 
=g_{\a\b}^\hbar \left(d -\frac{1}{\hbar}
X_-dz_\b\right) \left(g_{\a\b}^\hbar\right)^{-1},
\ee
if and only if the coordinate  is a projective
coordinate system. We choose one.

\item With this particular projective
coordinate system,  every point $\left(E_0,\phi({q})\right)
\in\kappa (B)\subset \cM_{\Dol}$ of the Hitchin 
section \eqref{Hitchin section} 
 gives rise
to a one-parameter
family of  $SL_2(\bC)$-opers  $\left(E_\hbar,
\nabla^\hbar({q})\right)\in \cM_{\deR}$. 
In other words, the local
expression
\be
\label{nabla q}
\nabla_\a^\hbar({q}):= d -
\frac{1}{\hbar}\phi_\a({q})dz_\a
\ee
on every $U_\a$ for $\hbar \ne 0$
determines a global holomorphic connection 
\be
\label{nabla q gauge}
\nabla^\hbar_\a({q}) 
=
 g_{\a\b}^\hbar 
\nabla^\hbar_\b({q}) 
\left(g_{\a\b}^\hbar\right)^{-1}
\ee
in
$E_\hbar$ 
satisfying the oper condition.

\item  
Deligne's $\hbar$-connection 
\be
\label{Deligne}
\hbar \nabla^\hbar ({q}):E_\hbar\lrar E_\hbar\tensor K_C
\ee
interpolates the Higgs pair and the oper, i.e.,
at $\hbar = 0$, the family \eqref{Deligne}
gives the Higgs pair 
$\left(E,-\phi({q})\right)\in \cM_{\Dol}$,
and at $\hbar = 1$ it gives an 
$SL_2(\bC)$-oper  $\left(E_1,\nabla^1({q})\right)
\in \cM_{\deR}$.

\item After a suitable gauge transformation
depending on $\hbar$,
the $\hbar \rar \infty$ limit of 
the oper $\nabla^\hbar ({q})$ exists and is equal to 
$\nabla^{\hbar = 1}(0)$. 
This point corresponds to the $\bC^*$-fixed point
on the Hitchin section. 
\end{itemize}
\end{thm}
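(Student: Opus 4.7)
My plan is to verify each bullet by explicit local computations in the coordinate charts $\{U_\a\}$, leveraging the identity $\sigma_{\a\b} = -\partial_\b \xi_{\a\b}$ and the cocycle relation \eqref{sigma relation} already established. For the first bullet, since $g_{\a\b}^\hbar$ has the same upper-triangular shape as the transition functions in \eqref{ext matrix}, with diagonal entries $\xi_{\a\b}, \xi_{\a\b}^{-1}$ and $(1,2)$-entry $-\hbar\,\partial_\b\xi_{\a\b}$, the cocycle identity reduces to \eqref{sigma relation} rescaled by $\hbar$ and so holds unconditionally; this glues $\{g_{\a\b}^\hbar\}$ into the bundle $E_\hbar$. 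The second bullet is the heart of the theorem and requires matching the gauge equation \eqref{d+X gauge equation} entry by entry. Setting $a = \xi_{\a\b}$ and $b = -\hbar\,\partial_\b \xi_{\a\b}$, I would expand $g_{\a\b}^\hbar\, X_-\,(g_{\a\b}^\hbar)^{-1}$ and $(dg_{\a\b}^\hbar)(g_{\a\b}^\hbar)^{-1}$ and compare them to $-\hbar^{-1}\,\xi_{\a\b}^{-2} X_-\, dz_\b$. The $(2,1)$, $(1,1)$, and $(2,2)$ entries collapse to identities involving only $\xi_{\a\b}$ and $\partial_\b \xi_{\a\b}$, and they hold unconditionally. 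The $(1,2)$ entry reduces to the single non-trivial constraint
\[
\hbar\, \xi_{\a\b}\, \partial_\b^2 \xi_{\a\b}\, dz_\b \;=\; 0,
\]
which by Proposition~\ref{proj condition} is equivalent to the coordinate system being projective.

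For the third bullet the additional $X_+$ contribution is handled by the identity $g_{\a\b}^\hbar X_+ (g_{\a\b}^\hbar)^{-1} = \xi_{\a\b}^2 X_+$ (since the upper-unipotent factor commutes with $X_+$), combined with the transition law \eqref{q ell relation} and $dz_\a = \xi_{\a\b}^{-2} dz_\b$; these together yield $g_{\a\b}^\hbar\, q_\b X_+ dz_\b\, (g_{\a\b}^\hbar)^{-1} = q_\a X_+ dz_\a$, so the $X_+$-term imposes no new condition beyond projectivity. The oper property \eqref{grading} with $F = K_C^{\half}$ is then immediate from the local form of $\nabla_\a^\hbar(q)$: applied to a local section of $F$, the connection produces a component in $E_\hbar/F$ proportional to $-\hbar^{-1}\, dz_\a$, which is nowhere vanishing, hence induces an $\cO_C$-module isomorphism $F \overset{\sim}{\lrar} (E_\hbar/F)\tensor K_C$.

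For Deligne's interpolation, substituting $\hbar = 0$ into \eqref{Deligne} collapses the transition function $g_{\a\b}^\hbar$ to $h_{\a\b} = \exp(H \log \xi_{\a\b})$ (hence $E_0$) and the operator to $-\phi(q)$, while $\hbar = 1$ recovers $(E_1, \nabla^1(q))$. For the $\hbar \to \infty$ limit, I would apply the constant diagonal gauge transformation $\gamma_\hbar := \mathrm{diag}(\hbar^{-1/2}, \hbar^{1/2})$. A short matrix computation yields $\gamma_\hbar\, g_{\a\b}^\hbar\, \gamma_\hbar^{-1} = g_{\a\b}^1$, so in the new frame the bundle stabilizes to $E_1$ independently of $\hbar$, and the connection becomes $d - X_-\, dz_\a - \hbar^{-2}\, q_\a X_+\, dz_\a$, whose $\hbar \to \infty$ limit is exactly $\nabla_\a^1(0)$. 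This is the oper associated to $q = 0$, which is the unique $\bC^*$-fixed point of the Hitchin section, since the natural $\bC^*$-action restricted to $\kappa(B)$ is $q \mapsto t^2 q$ after absorbing the scaling of $\phi$ by the diagonal gauge $\mathrm{diag}(t^{1/2},t^{-1/2})$.

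The principal obstacle is the matrix computation in the second bullet: tracking the $(1,2)$-entry carefully enough to recognize the vanishing of $\partial_\b^2 \xi_{\a\b}$ as the sole emergent constraint. Once this projectivity condition is in place, the remainder of the proof is bookkeeping and direct algebra, with every subsequent step reducing to an identity already available from the construction of $\xi_{\a\b}$ and the Hitchin section.
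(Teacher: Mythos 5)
Your proposal is correct and follows essentially the same route as the paper: the cocycle condition via the differentiated relation for $\xi_{\a\b}$, the identification of $\partial_\b^2\xi_{\a\b}=0$ (equivalently projectivity, via Proposition~\ref{proj condition}) as the sole constraint in \eqref{d+X gauge equation}, the unconditional compatibility of the $q X_+$ term from \eqref{q ell relation}, the oper condition from the nowhere-vanishing $(2,1)$-entry, and the $\hbar\to\infty$ limit via the gauge $\diag(\hbar^{-1/2},\hbar^{1/2})=\hbar^{-H/2}$. The only cosmetic difference is that you carry out the key gauge computation entrywise on the explicit $2\times 2$ matrices, whereas the paper organizes it through the adjoint-action expansion \eqref{adjoint formula} and the identity $\partial_\b^2\log\xi_{\a\b}=-(\partial_\b\log\xi_{\a\b})^2$, which amounts to the same calculation.
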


\begin{proof}
The cocycle condition of $g_{\a\b}$ 
has been established in \eqref{sigma ab} and
\eqref{gab}. 
Proof of  \eqref{d+X gauge equation} is a
straightforward calculation, using
 the power series expansion of the 
adjoint action
\be\label{adjoint formula}
e^{\hbar A}Be^{-\hbar A}
= 
\sum_{n=0}^\infty 
\frac{1}{n!} \hbar ^n (\rm{ad}_A)^n(B)
:=
\sum_{n=0}^\infty
\frac{1}{n!}\; \hbar ^n\;
\overset{n}
{\overbrace{[A,[A,[\cdots,[A}},B]\cdots]]].
\ee
It follows that
\begin{align*}
&g_{\a\b}^\hbar X_-\left(g_{\a\b}^\hbar \right)^{-1}
\\
&=
 \exp(H \log \xi_{\a\b})
\exp\left(-\hbar \partial_\b \log\xi_{\a\b}
X_+\right)X_-
\exp\left(\hbar \partial_\b \log\xi_{\a\b}
X_+\right)
\exp(-H \log \xi_{\a\b})
\\
&=
 \exp(H \log \xi_{\a\b})
X_-
\exp(-H \log \xi_{\a\b})
- \hbar \partial_\b \log\xi_{\a\b}H
\\
&\qquad
-  \hbar^2 (\partial_\b \log\xi_{\a\b})^2
\exp(H \log \xi_{\a\b})
X_+
\exp(-H \log \xi_{\a\b}).
\end{align*}
Note that \eqref{zeta''} is equivalent to 
$$
\partial_\b \partial_\b\log\xi_{\a\b}
=
\partial_\b \left(\xi_{\a\b}^{-1}\partial_\b\xi_{\a\b}\right)
=
-\xi_{\a\b}^{-2}(\partial_\b\xi_{\a\b})^2
= 
- (\partial_\b \log\xi_{\a\b})^2,
$$
hence to 
$$
\partial_\b g_{\a\b}^\hbar \left(g_{\a\b}^\hbar \right)^{-1}
=
\partial_\b \log\xi_{\a\b}H 
+\hbar (\partial_\b \log\xi_{\a\b})^2
\exp(H \log \xi_{\a\b})
X_+
\exp(-H \log \xi_{\a\b}).
$$
Therefore, noticing \eqref{X- local}, 
\eqref{zeta''} is equivalent to 
\begin{align*}
\left(
\frac{1}{\hbar}
g_{\a\b}^\hbar X_- \left(g_{\a\b}^\hbar \right)^{-1}
+
\partial_\b g_{\a\b}^\hbar \left(g_{\a\b}^\hbar \right)^{-1} 
\right) dz_\b
&=
\frac{1}{\hbar} \exp(H \log \xi_{\a\b})
X_-dz_\b
\exp(-H \log \xi_{\a\b})
\\
&=
\frac{1}{\hbar} X_-dz_\a.
\end{align*}
The statement follows from 
Proposition~\ref{proj condition}.

To prove \eqref{nabla q gauge},
we need, 
  in addition to 
 \eqref{d+X gauge equation},
 the following relation:
\be
\label{q gauge computation}
 ({q})_\a X_+
 dz_\a = g_{\a\b}^\hbar 
 ({q})_\b X_+ dz_\b\left(g_{\a\b}^\hbar\right)^{-1}.
\ee
But \eqref{q gauge computation} is obvious from 
 \eqref{Higgs local}
and \eqref{g hbar}.

The line bundle $F$ required in the definition 
of  $SL_2(\bC)$-oper is simply $K_C^\half$. 
The isomorphism \eqref{grading} is 
a consequence of \eqref{si}. 
Finally, the gauge transformation of 
$\nabla^\hbar({q})$ by a bundle automorphism 
\be\label{hbar gauge transf 1}
\hbar^{-\frac{H}{2}}
=
\begin{bmatrix}
\hbar^{-\frac{1}{2}}&
\\
&\hbar^{\frac{1}{2}}
\end{bmatrix}
\ee
on each coordinate neighborhood $U_\a$ 
gives
\be\label{hbar gauge transf 2}
d-\frac{1}{\hbar}\phi({q})
\longmapsto 
\hbar^{-\frac{H}{2}}
\left(
d-\frac{1}{\hbar}\phi({q})
\right)
\hbar^{\frac{H}{2}}
=
d-\left(X_-+  \frac{{q}}{\hbar^2} 
X_+\right).
\ee
This is because
$$
\hbar^{-\frac{H}{2}}X_-\hbar^{\frac{H}{2}}
= \hbar X_- \qquad \text{and}\qquad 
\hbar^{-\frac{H}{2}}X_+^2 \hbar^{\frac{H}{2}}
= \hbar^{-2} X_+^2,
$$
which follows from the adjoint formula 
\eqref{adjoint formula}.
Therefore,
$$
\lim_{\hbar\rar \infty}\nabla^\hbar({q}) \sim
d-X_- = \nabla^{\hbar=1}(0),
$$
where the symbol $\sim$ means gauge equivalence. 
This completes the proof of the theorem.
\end{proof}

The  construction theorem
yields the following.

\begin{thm}[Biholomorphic quantization
of Hitchin spectral curves]
\label{thm:quantization-holomorphic}
Let $C$ be a compact Riemann surface 
of genus $g\ge 2$
with a chosen projective coordinate system 
subordinating its complex structure. 
We denote by $\cM_{\Dol}$ the moduli
space of stable holomorphic $SL_2(\bC)$-Higgs
bundles over $C$, and by $\cM_{\deR}$ the
moduli space of irreducible 
holomorphic $SL_2(\bC)$-connections
on $C$. For a fixed theta characteristic
$K_C^\half$, we have a Hitchin section 
$\kappa(B)\subset \cM_{\Dol}$ of \eqref{Hitchin section}. 
We denote by $Op\subset \cM_{\deR}$ the moduli
space of $SL_2(\bC)$-opers with the
condition that the required line bundle 
is given by 
$F = K_C^{\half}$. 
Then the map
\be\label{biholomorphic quantization}
\cM_{\Dol}\supset 
\kappa(B)\owns \left(E_0,\phi({q})\right)
\overset{\gam}{\longmapsto} 
\left(E_\hbar,\nabla^\hbar({q})\right)
\in Op\subset \cM_{\deR}
\ee
evaluated at $\hbar = 1$
is a biholomorphic map with respect to the 
natural complex structures induced from the 
ambient spaces. 

The biholomorphic quantization 
\eqref{biholomorphic quantization} is also 
$\bC^*$-equivariant. The
$\lam\in \bC^*$ action on the Hitchin 
section is defined by
$\phi\longmapsto \lam\phi$. 
The  oper  corresponding to $\left(E_0,\lam\phi({q})\right)
\in \kappa(B)$ is 
$d-\frac{\lam}{\hbar}\phi({q})$. 
\end{thm}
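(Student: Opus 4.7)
The map $\gamma$ evaluated at $\hbar = 1$ sends $(E_0, \phi({q})) \in \kappa(B)$ to $(E_1, \nabla^1({q}))$, with local expression $\nabla^1_\a({q}) = d - (X_- + {q}_\a X_+)\,dz_\a$. Well-definedness as a map into $Op \subset \cM_{\deR}$---namely that these local data patch to a global irreducible oper with flag $F = K_C^{\half}$---is exactly the content of Theorem~\ref{thm:construction of opers}. Identifying $\kappa(B) \cong B$ via the Hitchin section, the assignment ${q} \mapsto \nabla^1({q}) = \nabla^1(0) - {q}X_+\,dz_\a$ is affine linear in ${q}$, with linear part given by the inclusion of $H^0(C, K_C^{\tensor 2})$ into the de Rham data on the fixed holomorphic bundle $E_1$. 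Consequently $\gamma$ is algebraic and in particular holomorphic with respect to the natural complex structures induced from the ambient moduli spaces.

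\textbf{Bijectivity.} For injectivity, if $\nabla^1({q}_1) = \nabla^1({q}_2)$ as connections on $E_1$, then their difference is the globally defined $\cO_C$-linear endomorphism-valued $1$-form $({q}_2 - {q}_1) X_+\,dz_\a$, whose vanishing forces ${q}_1 = {q}_2$ in $H^0(C, K_C^{\tensor 2})$. For surjectivity, one invokes the Beilinson--Drinfeld classification \cite{BD}: the space of $SL_2(\bC)$-opers with prescribed flag $F = K_C^{\half}$ is a torsor over $H^0(C, K_C^{\tensor 2})$. Concretely, given an arbitrary such oper $(E,\nab)$, one fixes an isomorphism $E \cong E_1$ and applies a unipotent gauge transformation preserving the flag $F$ to bring $\nab$ into the canonical form $d - (X_- + {q} X_+)\,dz_\a$ for a unique ${q} \in B$; here the $(2,1)$-component is normalized to $1$ using the oper condition, and the diagonal part is killed using the $SL_2(\bC)$ trace-free condition together with residual gauge freedom. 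By Riemann--Roch, $\dim_\bC B = \dim_\bC Op = 3g - 3$, so the injective affine morphism $\gamma$ between affine spaces of equal dimension is bijective.

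\textbf{Biholomorphicity, $\bC^*$-equivariance, and main obstacle.} Since $\gamma$ is affine with linear part a $\bC$-linear isomorphism of $H^0(C, K_C^{\tensor 2})$, its differential $d\gamma$ is everywhere invertible, and combined with holomorphic bijectivity this yields biholomorphicity. For $\bC^*$-equivariance, the construction of Theorem~\ref{thm:construction of opers} is $\bC$-linear in the Higgs field: substituting $\lam\phi({q})$ for $\phi({q})$ in \eqref{nabla q} yields $\nab^\hbar_\a = d - \frac{\lam}{\hbar}\phi({q})_\a\,dz_\a$, which is exactly the formula stated in the theorem and identifies the rescaling action on both sides of $\gamma$. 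The substantive step is surjectivity: the Riemann--Roch dimension count is routine, but it becomes useful only once one knows that $Op$ is a torsor over $H^0(C, K_C^{\tensor 2})$. This either follows from the Beilinson--Drinfeld theory of opers or requires a direct gauge-theoretic normalization of an arbitrary oper connection, and this is the one place where the theorem goes beyond formal manipulation of the explicit construction in Theorem~\ref{thm:construction of opers}.
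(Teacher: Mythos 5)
Your proposal is correct, and it is in fact more detailed than the paper's own proof on the main point. The paper's proof of this theorem consists almost entirely of the $\bC^*$-equivariance computation: it notes that $\phi\mapsto \lam\phi$ induces the weighted action $q\mapsto \lam^2 q$ on $B$ through the Hitchin map, and exhibits the gauge transformation $(\lam/\hbar)^{H/2}$ giving $d-\frac{\lam}{\hbar}\phi(q)\sim d-(X_-+\frac{\lam^2 q}{\hbar^2}X_+)$, so that the quantization intertwines the two actions. The biholomorphicity itself is delegated to the construction theorem together with the assertion (made in the remark following the theorem) that, once a projective structure fixes the origin $\nab^1(0)$, every oper with $F=K_C^\half$ is the translate $\nab^1(q)$ by a unique $q\in H^0(C,K_C^{\tensor 2})$ --- i.e.\ exactly the torsor statement you attribute to Beilinson--Drinfeld and identify as the substantive step. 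So your route (affine-linearity in $q$, injectivity from $\cO_C$-linearity of the difference of connections, surjectivity from the torsor structure or a unipotent gauge normalization, plus the dimension count $\dim B=\dim Op=3g-3$) makes explicit what the paper leaves implicit, at the cost of citing \cite{BD} or sketching the normalization; the paper's version buys the precise equivariance statement, which you treat too lightly: merely substituting $\lam\phi(q)$ into \eqref{nabla q} restates the definition, whereas the content is that $d-\lam\phi(q)$ is gauge equivalent to $\nab^1(\lam^2 q)$, matching the induced action $q\mapsto\lam^2 q$ on $B$. Two small points to tighten in your surjectivity argument: you should justify that the underlying bundle of an arbitrary oper with flag $K_C^\half$ is the non-split extension $E_1$ (the split bundle $K_C^\half\dsum K_C^{-\half}$ admits no holomorphic connection at all, since its summands have non-zero degree), and note that the normal form $d-(X_-+qX_+)$ with $q$ a genuine quadratic differential is only available after the projective coordinate system is fixed --- in an arbitrary coordinate the inhomogeneous (Schwarzian) term appears, which is precisely why the theorem hypothesizes a chosen projective structure.
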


\begin{proof}
The $\bC^*$-equivariance follows from the
same argument of the gauge transformation
\eqref{hbar gauge transf 1}, \eqref{hbar gauge transf 2}.
The action $\phi\longmapsto \lam\phi$
on the Hitchin section induces a weighted 
action 
$$
B\owns {q}
\longmapsto \lam^2 {q}\in B
$$
through $\mu_H$. 
Then we have the gauge equivalence via the gauge 
transformation 
$\left(\frac{\lam}{\hbar}\right)^{\frac{H}{2}}$:
$$
d - \frac{\lam}{\hbar}\phi({q})
\sim 
\left(\frac{\lam}{\hbar}\right)^{\frac{H}{2}}
\left(
d - \frac{\lam}{\hbar}\phi({q})
\right)
\left(\frac{\lam}{\hbar}\right)^{-\frac{H}{2}}
=
d -\left( X_- +\frac{\lam^2 {q}}
{\hbar^2} X_+\right).
$$
\end{proof}

\begin{rem}
In the construction theorem, our
use of a projective coordinate system is
essential, through 
\eqref{xiab}.  Only in such a coordinate,
our particular  definition 
\eqref{nabla q} makes sense. This is due
to the vanishing of the second derivative
of $\xi_{\a\b}$. And as we have seen above,
the projective coordinate system determines the \emph{origin}
$\nab^1(0)$ of the space $Op$ of opers. Other
opers are simply translation $\nab^1(q)$ from the
origin by $q\in H^0(C,K_C^{\tensor 2})$. 
\end{rem}


\section{Semi-classical limit of 
$SL_2(\bC)$-opers}
\label{sect:SCL}

A holomorphic connection on a 
compact Riemann surface $C$ is automatically flat.
Therefore, it defines a $\cD$-module over
$C$. Continuing the last section's conventions,
let us fix a projective coordinate system on $C$,
and let $\left(E_0,\phi({q})\right) = \kappa({q})$
be a point on the Hitchin section of 
\eqref{Hitchin section}. 
It uniquely defines an $\hbar$-family of opers
$\left(E_\hbar,\nabla^\hbar({q})\right)$.  

In this section, we establish that 
the $\hbar$-connection $\hbar \nabla^\hbar({q})$
defines a family of  $\cD$-modules on 
$C$ parametrized by $B$ such that 
the semi-classical limit of the family agrees with the 
family of spectral curves
 over $B$. 

To calculate the semi-classical limit,
let us trivialize the vector bundle $E_\hbar$
on each simply connected coordinate neighborhood
 $U_\a$ with 
  coordinate $z_\a$ of the chosen 
 projective coordinate system.
 A flat section $\Psi_\a$ of $E_\hbar$ over
 $U_\a$ is a solution of  
 \be\label{trivialization}
 \hbar \nabla^\hbar_\a({q})\Psi_\a:=
\left( \hbar d - \phi_\a ({q})\right) 
\begin{bmatrix}
\hbar \psi
\\
\psi
\end{bmatrix}_\a
=
0,
 \ee
 with an appropriate unknown function $\psi$. 
Since $\Psi_\a = g_{\a\b}^\hbar \Psi_\b$, 
the function $\psi$ on $U_\a$
satisfies the transition relation
$(\psi)_\a = \xi_{\a\b}^{-1}(\psi)_\b$. It means
that
$\psi$ is  actually
a  local section of the line bundle 
$K_C^{-\frac{1}{2}}$. 
There are two linearly independent solutions
of \eqref{trivialization}, because
${q}$ is a
holomorphic function on $U_\a$.
Since $\phi({q})$ is independent of $\hbar$ and takes
 the form
\be
\phi({q}) 
=
\begin{bmatrix}
0&{q}
\\
1&0
\end{bmatrix},
\ee
we see that
\eqref{trivialization}
is equivalent to 
  the second order equation
 \be\label{order 2}
\hbar^2 \psi'' -{q}\psi =0
\ee
for $\psi\in K_C^{-\frac{1}{2}}$. 
Since we are using a fixed projective coordinate
system, the connection $\nabla^\hbar({q})$
takes the same form on each coordinate
neighborhood $U_\a$. Therefore,
the shape of the differential equation
of \eqref{order 2} as an equation for 
$\psi$ is again the
same on every coordinate neighborhood,
as we wished to achieve in \eqref{2nd}.
This is exactly what
we  refer to as the
\textbf{quantum curve} of the 
spectral curve $\det(\eta-\phi({q}))=0$.
It is now obvious to calculate the semi-classical
limit of the  $\cD$-module
corresponding to $\hbar\nabla^\hbar({q})$.

\begin{thm}[Semi-classical limit of
an oper]
\label{thm:SCL}
Under the same setting of Theorem~\ref{thm:quantization-holomorphic}, let
$\cE({q})$ denote the  $\cD$-module
$\left(E_\hbar, \hbar\nabla^\hbar({q})\right)$
associated with the oper of \eqref{biholomorphic quantization}. Then the semi-classical limit 
of $\cE({q})$ is the spectral curve $\Sigma\subset T^*C$
of $\phi({q})$
defined by the characteristic equation
$\det(\eta-\phi({q}))=0$.
\end{thm}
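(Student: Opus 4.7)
The plan is to reduce the rank-two $\hbar$-connection $\hbar\nabla^\hbar(q)$ to a single scalar $\hbar$-differential operator on $C$, compute its principal symbol, and identify the zero locus of that symbol in $T^*C$ with the Hitchin spectral curve. The argument uses the computation already made in the displays preceding the theorem, so most of the work is interpretation rather than calculation.

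First I would recall the local reduction performed in equations \eqref{trivialization}--\eqref{order 2}. On each chart $U_\alpha$ of the fixed projective atlas, a flat section of $\hbar\nabla^\hbar(q)$ has the form $\Psi_\alpha = [\hbar\psi,\psi]^t$, and the $2\times 2$ system collapses to
\begin{equation*}
P_\hbar\,\psi := \big(\hbar^2\partial_\alpha^2 - (q)_\alpha\big)\,\psi = 0,
\end{equation*}
acting on $\psi\in K_C^{-1/2}$. Theorem~\ref{thm:construction of opers} together with the transition rules \eqref{q ell relation} and the projective-coordinate identity \eqref{zeta''} (i.e.\ $\partial_\beta^2\xi_{\alpha\beta}=0$) ensures that these local operators glue to a single globally defined $\hbar$-differential operator $K_C^{-1/2}\to K_C^{3/2}$; this is precisely the $\cD$-module $\cE(q)$.

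Next I would spell out the notion of semi-classical limit. For an $\hbar$-family $P_\hbar = \sum_k a_k(z)(\hbar\partial)^k$ of differential operators with holomorphic coefficients, the semi-classical (principal) symbol is the function $\sigma(P_\hbar)=\sum_k a_k(z)\eta^k$ on the cotangent bundle, obtained by replacing $\hbar\partial$ by the tautological one-form $\eta\in H^0(T^*C,\pi^*K_C)$ and letting $\hbar\to 0$. Applying this to our $P_\hbar$ yields
\begin{equation*}
\sigma(P_\hbar) \;=\; \eta^2-\pi^*q \;\in\; H^0\!\big(T^*C,\pi^*K_C^{\otimes 2}\big),
\end{equation*}
where the global nature of the right-hand side is guaranteed by the gluing established in the previous step. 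I would then perform the one-line matrix computation
\begin{equation*}
\det\!\big(\eta\cdot\mathrm{Id}_E-\pi^*\phi(q)\big)
=\det\!\begin{bmatrix}\eta & -q\\ -1 & \eta\end{bmatrix}
=\eta^2-q,
\end{equation*}
which identifies $\sigma(P_\hbar)$ with the characteristic polynomial of $\phi(q)$. The divisor of zeros of this section in $T^*C$ is, by definition, the Hitchin spectral curve $\Sigma$ of $\phi(q)$, completing the identification.

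The main obstacle is not the pointwise symbol calculation, which is trivial, but the global coherence of the scalar reduction: the operator $P_\hbar$ must be shown to be defined \emph{independently} of the chart, so that $\sigma(P_\hbar)$ is an honest quadratic differential pulled back to $T^*C$ rather than a chart-dependent expression. This is exactly where the hypothesis of a projective coordinate system enters, via the vanishing $\partial_\beta^2\xi_{\alpha\beta}=0$ underlying Theorem~\ref{thm:construction of opers}; without it the shape of the scalar equation \eqref{order 2} would acquire lower-order $\hbar$-corrections under coordinate change, and the principal symbol would not agree with the characteristic polynomial on the nose.
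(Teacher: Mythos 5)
Your proposal is correct and follows essentially the same route as the paper: reduce the oper locally to the scalar equation $\hbar^2\psi''-q\psi=0$ on $K_C^{-1/2}$, use the projective coordinate system (via $\partial_\beta^2\xi_{\alpha\beta}=0$) to make this reduction chart-independent, and identify the resulting limit $\eta^2-q$ with $\det(\eta-\phi(q))$. The only cosmetic difference is that the paper phrases the semi-classical limit as the $\hbar\to 0$ conjugation $e^{-S_0/\hbar}\bigl[\hbar^2(d/dz_\alpha)^2-q\bigr]e^{S_0/\hbar}$ with $dS_0=y\,dz_\alpha$, which is exactly your principal-symbol substitution $\hbar\partial\mapsto\eta$.
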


The semi-classical limit of \eqref{order 2}
is the limit
\be
\label{char poly = qc}
\lim_{\hbar \rar 0}
e^{-\frac{1}{\hbar}S_0(z_\a)}
\left[\hbar^2\left(\frac{d}{dz_\a}
\right)^2 -{q}\right]
e^{\frac{1}{\hbar}S_0(z_a)}
=
y^2-{q},
\ee
where $S_0(z_\a)$ is a holomorphic 
function on $U_\a$ so that $dS_0 = ydz_\a$
gives a local trivialization of $T^*C$ over 
$U_\a$. 
The computation of semi-classical limit
is the same as the calculation of 
the determinant
of the  connection $\hbar\nabla^\hbar({q})$, after 
taking conjugation by the scalar  diagonal 
matrix  $e^{-\frac{1}{\hbar}S_0(z_\a)}I_{2\times 2}$, 
and then take the limit as $\hbar$ goes to $0$.

For every $\hbar\in H^1(C,K_C)$, 
the $\hbar$-connection 
$\left(E_\hbar,\hbar \nabla^\hbar({q})\right)$
of \eqref{Deligne} defines a global  
$\cD_C$-module structure in $E_\hbar$. 
Thus we have constructed a universal family 
$\cE_C$ of 
$\cD_C$-modules on a given $C$
with a fixed spin structure  and a projective structure:
\be\label{family D-modules}
\xymatrix{
\cE_C \ar[d]
& \left(E_\hbar,\nabla^\hbar({q})\right)\ar[d]
\ar[l]_{\supset}
\\
C\times B\times H^1(C,K_C)
&C\times \{{q}\}\times\{\hbar\}\ar[l] .
}
\ee
The universal family $\cS_C$ of spectral curves is defined 
over $C\times B$.
\be\label{family spectral}
\xymatrix{
\bP\left(K_C\dsum \cO_C\right)\times B\ar[d]
& \cS_C \ar[d]\ar[l]
& \left(\det\big(\eta-\phi({q})\big)\right)_0
\ar[d]\ar[l]
\\
C\times B & C\times B\ar[l]_=
& C\times \{{q}\}\ar[l]_\supset .
}
\ee
The semi-classical limit is thus a map of families
\be\label{family SCL}
\xymatrix{
\cE_C \ar[d]\ar[r]& \cS_C\ar[d]
\\
C\times B\times H^1(C,K_C)\ar[r]
&C\times B.
}
\ee


\section{Non-Abelian Hodge correspondence
between Hitchin moduli spaces
}
\label{sect:NAH}

The biholomorphic map \eqref{biholomorphic quantization}
is defined by fixing a projective structure of 
the base curve $C$. Gaiotto \cite{Gai} 
conjectured that such a correspondence would be
canonically constructed through a 
\emph{scaling limit} of non-Abelian Hodge 
correspondence.
The conjecture has been solved in 
\cite{DFKMMN} for \emph{holomorphic}
Hitchin moduli spaces
$\cM_{\Dol}$ and $\cM_{\deR}$ 
constructed over an arbitrary 
complex simple and simply connected 
Lie group $G$. 
In this section, we review the main result of
\cite{DFKMMN} 
for $G=SL_2(\bC)$ and
compare it with our quantization.

 We denote by $E^{\text{top}}$
the topologically trivial complex
vector bundle of 
rank $2$ on a compact Riemann surface $C$
of genus $g\ge 2$. 
The  correspondence
between stability conditions of
holomorphic vector bundles
on $C$ and PDEs on differential geometric data is used in 
 Narasimhan-Seshadri \cite{NS} to obtain 
 topological structures of the moduli space of stable bundles
(see also \cite{AB, MFK}). Extending
this classical case, 
the stability
condition for an $SL_2(\bC)$-Higgs
bundle  $(E,\phi)$ translates into 
a system of PDEs, known as
\emph{Hitchin's equations}, imposed on 
a set of geometric data  \cite{Donaldson, H1, S}.
The data we need are a Hermitian fiber metric $h$
on $E^{\rm{top}}$, a unitary connection 
$D$ in $E^{\rm{top}}$
with respect to $h$, and a differentiable 
$sl_2(\bC)$-valued
$1$-form $\phi$ on $C$. In this section we use $D$
for unitary connections to avoid confusion with 
holomorphic connections we have been using until
the last section. 
 \textbf{Hitchin's equations} are the following system of 
 nonlinear PDEs.
\be\label{Hitchin's}
\begin{cases}
F_D + [\phi, \phi^\dagger] = 0\\
D^{0.1}\phi = 0.
\end{cases}
\ee
Here, $F_D$ denotes  the curvature
$2$-form  of $D$, 
$\phi^\dagger$ is the Hermitian conjugate 
of $\phi$ with respect to 
the metric $h$, and $D^{0,1}$ is the Cauchy-Riemann
part of $D$ defined by
the complex structure of the base
curve  $C$. $D^{0,1}$ determines a complex structure in $E^{\rm{top}}$,
which we simply denote by $E$. Then
$\phi$ becomes a holomorphic Higgs field in $E$
because it satisfies the Cauchy-Riemann equation
\eqref{Hitchin's}. The pair $(E,\phi)$ 
 constructed in this way 
 from a solution of Hitchin's 
 equations is a \emph{stable} Higgs bundle.
 Conversely \cite{S}, a stable Higgs bundle
 $(E,\phi)$ gives rise to a unique 
 harmonic Hermitian metric $h$ and the Chern 
 connection $D$ with respect to $h$ so that 
 the data satisfy Hitchin's equations. 
The stability condition
for the holomorphic Higgs pair $(E,\phi)$ is  thus
translated into 
\eqref{Hitchin's}.

Define a one-parameter family
of connections
\be\label{twister}
D(\zeta) := \frac{1}{\zeta}\cdot \phi + D + \zeta \cdot \phi^\dagger,
\qquad
\zeta\in \bC^*.
\ee
Then the flatness of $D(\zeta)$ for all $\zeta$
is equivalent to \eqref{Hitchin's}. 
The \emph{non-Abelian Hodge correspondence}
\cite{Donaldson, H1, Mochizuki, S}
is a diffeomorphic correspondence
$$
\nu:\cM_{\Dol} \owns (E,\phi)
\longmapsto (\widetilde{E},\widetilde{\nabla})
\in \cM_{\deR}.
$$
Proving the diffeomorphism of these moduli spaces
is far beyond of the scope
of this article. Here, we only give
the definition of the map $\nu$. 
We start with the solution $(D,\phi,h)$ of 
Hitchin's equations corresponding to 
a stable Higgs bundle $(E,\phi)$. It induces a family of flat 
connections $D(\zeta)$.
Define a complex structure $\widetilde{E}$
in $E^{\rm{top}}$ by $D(\zeta=1)^{0,1}$. 
Since $D(\zeta)$ is flat, $\widetilde{\nabla}:=
D(\zeta=1)^{1,0}$ is automatically a holomorphic
connection in $\widetilde{E}$. 
 Stability
of $(E,\phi)$ implies 
that the resulting holomorphic connection  is irreducible, hence
$(\widetilde{E},\widetilde{\nabla})\in \cM_{\deR}$.
Since this correspondence goes through the
real unitary connection $D$, the change of the 
complex structure of $E$ to that of 
$\widetilde{E}$ is not a 
holomorphic deformation.

Extending the idea of  one-parameter
family \eqref{twister}, Gaiotto
conjectures:

\begin{conj}[Gaiotto \cite{Gai}]
\label{conj:Gaiotto}
Let $(D, \phi, h)$ be the solution
of \eqref{Hitchin's} corresponding
to a sable Higgs bundle 
$\left(E_0,\phi({q})\right)$ on the 
$SL_2(\bC)$-Hitchin 
section \eqref{Hitchin section}. 
Consider
the following two-parameter family of 
connections 
\be\label{two}
D(\zeta,R) := \frac{1}{\zeta}\cdot R\phi + D + \zeta \cdot R\phi^\dagger,
\qquad
\zeta\in \bC^*, R\in \bR_+.
\ee
Then the scaling limit
\be\label{scaling}
\lim_{\substack{R\rar 0,\zeta\rar 0\\
\zeta/R = \hbar}} D(\zeta,R)
\ee
exists for every $\hbar\in \bC^*$, and  forms
an $\hbar$-family of $SL_2(\bC)$-opers. 
\end{conj}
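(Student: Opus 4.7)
The plan is to substitute $\zeta = \hbar R$ into \eqref{two} and analyze the limit $R\to 0$. This yields
\[
D(\hbar R, R) = \tfrac{1}{\hbar}\phi + D_R + \hbar R^2\, \phi^\dagger,
\]
where $(D_R, h_R)$ is the solution of Hitchin's equations \eqref{Hitchin's} for the rescaled Higgs pair $(E_0, R\,\phi(q))$ and $\phi^\dagger$ is the Hermitian conjugate with respect to $h_R$. The first term is $R$-independent, while the third carries $R^2$ and should vanish, so the task reduces to understanding $\lim_{R\to 0} D_R$ and verifying that the resulting flat connection is the oper $\nabla^\hbar(q)$ constructed in Theorem \ref{thm:construction of opers}.

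The proof splits into an analytic step (existence of the limit) and an algebraic step (identification with the oper). For existence, the plan is to apply the complex gauge transformation $g_R := R^{-H/2}$, diagonal in the Hitchin splitting $E_0 = K_C^{1/2}\oplus K_C^{-1/2}$, so that $g_R\, \phi(q)\, g_R^{-1} = R X_- + R^{-1} q X_+$ in the notation of \eqref{TDS}. This rescaling is chosen to compensate the scaling of the Higgs field, and the key analytic claim is that the rescaled harmonic metric $g_R\cdot h_R$ converges, as $R \to 0$, to the natural metric on $K_C^{1/2}\oplus K_C^{-1/2}$ induced by the Fuchsian uniformization \eqref{rho} of $C$. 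Equivalently, the gauged Chern connection $g_R D_R g_R^{-1} + g_R\,dg_R^{-1}$ has a well-defined limit, and the $\hbar R^2 \phi^\dagger$ contribution vanishes after gauge because $\phi^\dagger$ grows at most like $R^{-2}$ and is multiplied by an additional vanishing factor from the harmonic-metric convergence.

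For the identification step, substitute the convergence back into the expression for $D(\hbar R, R)$ in the gauge $g_R$. The limit is an upper-triangular $\hbar$-connection on $E_\hbar$ whose lower-left entry is the nowhere-vanishing $1$-form $\tfrac{1}{\hbar}\, dz_\alpha$ in the projective coordinates determined by the uniformization. This matches the explicit oper formula \eqref{nabla q} and automatically satisfies the oper condition \eqref{grading} of Definition \ref{def:oper}. The $\bC^*$-equivariance recorded in Theorem \ref{thm:quantization-holomorphic} appears here as the freedom to rescale $\phi\mapsto\lambda\phi$, which under $\zeta=\hbar R$ is just a reparametrization of the scaling ray.

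The hardest step is the analytic one: controlling the family $(D_R, h_R)$ along the Hitchin ray as $R \to 0$. This is a degenerate limit of Hitchin's equations in which the Higgs field scales to zero but the harmonic metric must degenerate in a precisely controlled way, and it requires uniform elliptic a priori estimates for the system \eqref{Hitchin's} in the small-Higgs-field regime — the technical heart of \cite{DFKMMN}. Once this convergence is in hand, the algebraic matching of the limit with $\nabla^\hbar(q)$ is a direct computation using only the structure of the Hitchin section and the projective coordinates it selects.
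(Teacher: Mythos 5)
You are asked to prove what the paper itself states only as a conjecture: the paper contains no proof of it, but records (citing \cite{DFKMMN}) that it holds for any simple, simply connected $G$, and the proof sketch given for the corollary only isolates the key point that, along the Hitchin section, the relevant limiting harmonic metric is the constant-curvature hyperbolic one, which is what produces the Fuchsian projective structure. Your outline follows exactly that route: set $\zeta=\hbar R$, rescale along the $\bC^*$-orbit of the Hitchin section, argue that the rescaled harmonic metrics converge to the uniformizing solution, and then identify the limit with $\nabla^\hbar(q)$. But you also defer the entire analytic content --- the convergence of the degenerating family $(D_R,h_R)$ --- to \cite{DFKMMN}. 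Since that convergence \emph{is} the conjecture, your proposal is a strategy summary of the cited solution rather than a proof; this mirrors what the paper does, but it should be stated as such.

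Two concrete steps, moreover, fail as written. First, the claim that the term $\hbar R^2\phi^{\dagger_{h_R}}$ vanishes in the limit is false. Test it at the $\bC^*$-fixed point $q=0$, where everything is exact: the harmonic metric for $(E_0,R X_-)$ is the constant diagonal rescaling of the hyperbolic solution $h=\mathrm{diag}(\rho,\rho^{-1})$, namely $h_R=\mathrm{diag}(R\rho,R^{-1}\rho^{-1})$, so $\phi^{\dagger_{h_R}}=R^{-2}\rho^{-2}X_+\,d\bar z$ and $\hbar R^2\phi^{\dagger_{h_R}}=\hbar\,\rho^{-2}X_+\,d\bar z$, independent of $R$ and nonzero. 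This $(0,1)$ term must survive: it is precisely what deforms the holomorphic structure from $E_0=K_C^{\half}\oplus K_C^{-\half}$ to the nontrivial extension $E_\hbar$, and if it were dropped the limit would not even be flat, since $F_{D_{h}}=-[\phi,\phi^{\dagger_h}]\neq 0$ by \eqref{Hitchin's}. The correct statement is that the limit equals $\frac{1}{\hbar}\phi(q)+D_{h_0}+\hbar\,\phi(0)^{\dagger_{h_0}}$ with $h_0$ the hyperbolic solution, and this becomes the oper $\nabla^\hbar(q)$ of \eqref{nabla q} only after passing to the holomorphic frame it induces --- which is exactly where the Fuchsian projective coordinates enter. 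Second, your compensating gauge has the wrong sign in the exponent: conjugation by $R^{-H/2}$ sends $R\,\phi(q)$ to $R^2X_-+qX_+$, whose $R\rar 0$ limit is the \emph{unstable} pair $(E_0,qX_+)$ (the invariant subbundle $K_C^{\half}$ has positive degree), so no harmonic metric exists there and no convergence statement for $g_R\cdot h_R$ is available in that presentation. The gauge you want is $R^{H/2}$, which sends $R\,\phi(q)$ to $\phi(R^2q)=X_-+R^2qX_+\rar\phi(0)$, the uniformizing point whose harmonic metric is the hyperbolic one; with that correction your mechanism matches the one behind \cite{DFKMMN} and the corollary's sketch.
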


\begin{rem}
\begin{enumerate}
\item 
The existence of the limit is non-trivial, because
the Hermitian metric $h$ blows up as $R\rar 0$.

\item
Unlike the case of non-Abelian Hodge 
correspondence, the Gaiotto limit works
only for a point in the Hitchin section. 
\end{enumerate}
\end{rem}

\begin{thm}[\cite{DFKMMN}]
Gaiotto's conjecture holds for an arbitrary
simple and simply connected complex
algebraic group $G$. 
\end{thm}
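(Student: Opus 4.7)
The plan is to combine the explicit oper construction of Theorem~7 with a careful asymptotic analysis of the harmonic metric along the Hitchin section. First, I would generalize the $SL_2(\bC)$ construction to arbitrary simple simply-connected $G$ using Kostant's principal three-dimensional subgroup $(X_+,X_-,H)\subset\Lieg$. For exponents $m_1,\dots,m_\ell$ of $G$, the Hitchin base is $B=\bigoplus_{i=1}^{\ell}H^0(C,K_C^{\tensor(m_i+1)})$, and the Hitchin section sends $q=(q_1,\dots,q_\ell)$ to $(E_0,\phi(q))$ with $E_0=K_C^{\rho^{\vee}}$ (a twist by the half-sum of positive coroots via the principal embedding) and $\phi(q)=X_-+\sum q_i e_{+,i}$, where $e_{+,i}$ are highest weight vectors for the TDS. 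The very same transition-function formula \eqref{g hbar}, with $H$ and $X_+$ replaced by the principal TDS generators, then produces a family of $G$-opers $\nabla^\hbar(q)$ in $E_\hbar$, subject to a choice of projective structure on $C$.

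Next, for fixed $q$, I would analyze the harmonic metric $h_R$ solving Hitchin's equations \eqref{Hitchin's} for the rescaled Higgs bundle $(E_0,R\phi(q))$. The key input is the decoupling phenomenon: as $R\to 0$, $h_R$ converges (away from the zeros of the discriminant) to the diagonal ``decoupled'' metric $h_\infty$ adapted to the weight grading $E_0=\bigoplus_\lambda E_0^\lambda$ induced by the TDS element $H$. This is the content of the asymptotic analysis of Mazzeo--Swoboda--Weiss--Witt and Mochizuki on limiting configurations, refined along the Hitchin section. Applying the $R$-dependent gauge transformation $g_R:=R^{\rho^{\vee}/2}$ (the principal TDS analogue of \eqref{hbar gauge transf 1}) converts the $\bC^*$-scaling of $\phi$ into an actual gauge transformation and pulls the family back to a fixed bundle frame.

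Third, I would compute the scaling limit \eqref{scaling} in this rescaled frame. Setting $\zeta=\hbar R$ in \eqref{two}, the gauge-transformed family reads
\begin{equation*}
g_R\,D(\hbar R,R)\,g_R^{-1}=\tfrac{1}{\hbar}\,g_R(R\phi)g_R^{-1}+g_R D g_R^{-1}+g_R\,dg_R^{-1}+\hbar R^{2}\,g_R\phi^\dagger g_R^{-1}.
\end{equation*}
The gauge absorption $g_R(R\phi)g_R^{-1}=\phi(q)$ is independent of $R$. The anti-holomorphic term $\hbar R^{2}\,g_R\phi^\dagger g_R^{-1}$ vanishes as $R\to 0$ because the weighted conjugation produces at worst $R^{-1}$ growth against the convergent limit of $h_R$, leaving an overall $R$-factor. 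The Chern connection piece $g_R D g_R^{-1}+g_R dg_R^{-1}$ converges to a holomorphic connection whose only non-diagonal contribution lies in the subspace generated by $X_+$, because $h_\infty$ is diagonal in the weight decomposition. A direct comparison with \eqref{nabla q} identifies this limit with $\nabla^\hbar(q)$ constructed in Theorem~\ref{thm:construction of opers}, thereby producing an $\hbar$-family of $G$-opers.

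The main obstacle is the second step: one must establish both the existence of $h_\infty$ as a genuine (possibly singular) limit and the uniform convergence $h_R\to h_\infty$ strong enough to control the $\phi^\dagger$ term globally on $C$. The analytic difficulty concentrates near the ramification divisor of the spectral cover, where the decoupled solution is singular; this requires gluing local model solutions to the global harmonic metric via the fiducial solution construction. A secondary subtlety is identifying the resulting flat structure of the limiting Chern connection with a projective structure on $C$ compatible with the chosen spin structure, so that the oper obtained via the scaling limit agrees on the nose with \eqref{biholomorphic quantization}; this can be done by checking that the $F=K_C^{\rho^{\vee}}$-filtration is preserved by $g_R D g_R^{-1}+g_R dg_R^{-1}$ in the limit and that the induced map $F\to (E_\hbar/F)\tensor K_C$ is the principal-nilpotent component of $\phi(q)$, i.e.\ the oper condition \eqref{grading}.
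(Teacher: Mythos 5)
Your first step is sound and is exactly the route indicated in the paper and in \cite{DFKMMN}: transcribe the $SL_2(\bC)$ oper construction to general $G$ via Kostant's principal TDS. The genuine gap is in your second step, which aims the analysis at the wrong asymptotic regime. The decoupling of Hitchin's equations into a diagonal metric adapted to the spectral eigenline decomposition, singular along the discriminant and repaired by gluing fiducial local models (Mazzeo--Swoboda--Weiss--Witt, \cite{Mochizuki}), is the $R\to\infty$ limit, where the Higgs field dominates. Gaiotto's limit \eqref{scaling} is the opposite regime $R\to 0$: conjugating $(E_0,R\phi(q))$ by powers of the grading element shows it is gauge equivalent to a point $(E_0,\phi(q_R))$ of the Hitchin section with $q_R\to 0$ (for $SL_2$, $q_R=R^2q$), so the harmonic metrics $h_R$ converge to the harmonic metric at the $\bC^*$-fixed point $\phi(0)=X_-$, which is built from the constant-curvature hyperbolic metric on $C$. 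In this limit there is no concentration at the ramification divisor, no singular limiting configuration, and no fiducial-solution gluing; what has to be proved is continuity of $h_R$ (with derivatives) down to $R=0$, the convergence of the Chern connections $D_{h_R}\to D_{h_0}$, and the vanishing of the term $\hbar R^2\phi^{\dagger_{h_R}}$. This is also precisely why the limit oper in \eqref{Gaiotto limit} is expressed in the \emph{Fuchsian} projective coordinate: the uniformizing metric at the fixed point produces the Fuchsian projective structure, a feature your scheme cannot recover, since your proposed $h_\infty$ is adapted to the eigen-decomposition of $\phi(q)$ (which varies with $q$ and degenerates at branch points), not to the fixed TDS weight grading of $E_0$.

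There is also a computational error in your third step that masks this issue. With $\zeta=\hbar R$ in \eqref{two}, the Higgs term is already $\zeta^{-1}R\phi=\hbar^{-1}\phi(q)$, independent of $R$, so no gauge transformation is needed to fix it; and conjugating $R\phi(q)$ by $g_R=R^{\rho^\vee/2}$ does \emph{not} give $\phi(q)$ independently of $R$ --- it rescales each $q_i$ by a positive power of $R$ (for $SL_2$, $R^{H/2}$ sends $R\phi(q)$ to $\phi(R^2q)$, cf.\ \eqref{hbar gauge transf 1}), so in your frame the limit would be the oper at $q=0$ rather than at $q$. The $R$-dependence in the correct frame sits entirely in $D_{h_R}$ and $\phi^{\dagger_{h_R}}$, and the substance of the proof in \cite{DFKMMN} is the control of these two terms as $R\to 0$ via the convergence $h_R\to h_0$ to the hyperbolic (Fuchsian) solution, not the large-$R$ spectral-curve asymptotics you propose.
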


Recall that the representation \eqref{rho}
gives a realization of $C$ from its universal covering
space $\bH$ as
 $$
 C \isom \bH\big/\rho\big(\pi_1(C)\big).
 $$
 The representation $\rho$ lifts to $SL_2(\bR)
 \subset SL_2(\bC)$, and defines a projective
 structure in $C$ subordinating its complex
 structure coming from $\bH$. 
 This projective structure is what we call the
 \emph{Fuchsian} projective structure. 

\begin{cor}[Gaiotto correspondence and
quantization \cite{DFKMMN}]
Under the same setting of 
Conjecture~\ref{conj:Gaiotto}, the limit
oper of \eqref{scaling} is given by
\be\label{Gaiotto limit}
\lim_{\substack{R\rar 0,\zeta\rar 0\\
\zeta/R = \hbar}} D(\zeta,R)
= d-\frac{1}{\hbar}\phi({q}) = \nabla^\hbar({q}),
\qquad \hbar\ne 0,
\ee
with respect to the Fuchsian projective coordinate
system. The correspondence
$$
\left(E_0,\phi({q})\right)
\overset{\gam}{\longmapsto}
\left(E_\hbar, \nabla^\hbar({q})\right)
$$
is  biholomorphic, unlike the non-Abelian Hodge
correspondence. 
\end{cor}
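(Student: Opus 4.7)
The plan is to combine three ingredients: the existence of the scaling limit established by the cited theorem from \cite{DFKMMN}, the explicit construction of $SL_2(\bC)$-opers from Theorem~\ref{thm:construction of opers}, and the Fuchsian uniformization $\rho:\pi_1(C)\lrar PSL_2(\bR)$ of \eqref{rho}. I would organize the argument in three stages: (i) analyze the harmonic metric on the Hitchin section, (ii) evaluate the scaling limit in a suitably renormalized gauge and in Fuchsian coordinates, and (iii) invoke the already-established biholomorphicity from Theorem~\ref{thm:quantization-holomorphic}.

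First I would restrict the two-parameter family $D(\zeta,R)$ to the Hitchin section $\kappa(B)$, where $(E_0,\phi(q))$ has the diagonal $H$-graded structure $E_0=K_C^{\half}\oplus K_C^{-\half}$ with $\phi(q)=X_-+qX_+$ as in Lemma~\ref{lem:Hitchin section}. For this particular family of stable Higgs bundles, the harmonic metric $h=h(q,R)$ solving Hitchin's equations \eqref{Hitchin's} for the rescaled triple $(D,R\phi,h)$ is diagonal with respect to the $H$-grading, and at $q=0$ the equation reduces to the uniformization equation of $C$, so that $h(0,R)$ is, up to an $R$-rescaling, the Poincar\'e hyperbolic metric on $C$ induced by $\rho$. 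This is precisely where the Fuchsian projective structure enters the correspondence in a canonical way, fixing the atlas \eqref{Mobius} used in the rest of the argument.

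Next I would perform the scaling limit after renormalizing by an $R$-dependent diagonal gauge transformation built from powers of $R$ compatible with the $H$-grading, in the same spirit as \eqref{hbar gauge transf 1}--\eqref{hbar gauge transf 2}. Using the adjoint formula \eqref{adjoint formula}, one finds that under the constraint $\zeta/R=\hbar$ the holomorphic Higgs term $\zeta^{-1}R\phi$ of \eqref{twister} rescales to $\hbar^{-1}\phi(q)$, the antiholomorphic term $\zeta R\phi^\dagger$ acquires an extra factor of $R^2$ and hence vanishes in the limit $R\to 0$, and the Chern connection $D$ converges to the exterior derivative $d$ in the Fuchsian trivialization, since the limiting metric is the hyperbolic one. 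Assembling these pieces yields the local expression $d-\hbar^{-1}\phi_\alpha(q)$ of \eqref{nabla q}, and the global oper gluing \eqref{d+X gauge equation} holds precisely because the coordinate system is projective and \eqref{zeta''} is satisfied by $\{\xi_{\alpha\beta}\}$; this identifies the limit with $\nabla^\hbar(q)$ of Theorem~\ref{thm:construction of opers}.

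Finally, biholomorphicity of $(E_0,\phi(q))\longmapsto (E_\hbar,\nabla^\hbar(q))$ is not re-proved: once the limit has been identified with the quantization map $\gamma$ of \eqref{biholomorphic quantization}, Theorem~\ref{thm:quantization-holomorphic} delivers the biholomorphic correspondence onto $Op\subset\cM_{\deR}$, and the contrast with the non-Abelian Hodge map $\nu$ is automatic because $\nu$ is constructed through the real unitary connection $D$ and is only real-analytic. The main obstacle is the analytic content of stages (i)--(ii): the harmonic metric $h(q,R)$ genuinely blows up as $R\to 0$, so a uniform asymptotic control of the Hitchin equations \eqref{Hitchin's} is needed to justify the interchange of limit and gauge transformation and to verify the convergence of the Chern connection to $d$. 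This uniform control is the technical core of the cited DFKMMN theorem; once it is granted, the remaining work is the essentially algebraic gauge computation sketched above, followed by the invocation of Theorem~\ref{thm:quantization-holomorphic}.
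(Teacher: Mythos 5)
Your proposal is correct and follows essentially the same route as the paper: the key step in both is that for $q=0$ the harmonic metric solving Hitchin's equations on the Hitchin-section bundle $E_0=K_C^{\half}\oplus K_C^{-\half}$ is the hyperbolic metric of $C$, which singles out the Fuchsian projective structure, while the hard asymptotic control of the scaling limit is delegated to the cited theorem of \cite{DFKMMN} and the oper form and biholomorphicity are read off from Theorems~\ref{thm:construction of opers} and \ref{thm:quantization-holomorphic}. Your version merely spells out the gauge-rescaling bookkeeping that the paper leaves implicit.
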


\begin{proof}
The key point is that since $E_0$ is made out
of $K_C$, the fiber metric $h$ naturally comes from
the metric of $C$ itself. Hitchin's equations
\eqref{Hitchin's} for ${q}=0$  then become a harmonic
equation for the metric of $C$, and its solution
is given by the constant curvature hyperbolic 
metric. This metric in turn defines the 
Fuchsian projective structure in $C$. For more
detail, we reefer to \cite{O-Paris, DFKMMN}.
\end{proof}

\begin{rem}
The case of an arbitrary simple algebraic group $G$ of the
conjecture, and the whole story of Part 2 for $G$,
have been worked out  in \cite{DFKMMN,OM5}. 
The key point is to use Kostant's TDS of \cite{Kostant}
and transcribe the $SL_2(\bC)$ situation  into 
$G$. 
\end{rem}



\providecommand{\bysame}{\leavevmode\hbox to3em{\hrulefill}\thinspace}

\bibliographystyle{amsplain}

\end{document}